\documentclass[11pt,reqno]{article}
\usepackage[dvipsnames]{xcolor}
\usepackage{multirow}
\usepackage{upgreek}
\usepackage{tikz}
\usepackage[english]{babel}
\usepackage{array}
\usepackage{comment}
\usepackage{graphicx}
\usepackage[dvipsnames]{xcolor}
\usepackage[centertags]{amsmath}
\usepackage{amsfonts,amssymb,amsthm}
\usepackage[]{hyperref}
\usepackage[latin1]{inputenc}
\usepackage{xcolor} 
\usepackage[noadjust]{cite}
\usepackage{mathtools,xparse}
\usepackage{mathtools}
\usepackage{bbm}
\usepackage{geometry}

\numberwithin{equation}{section}

\newtheorem{theorem}{Theorem}[section]
\newtheorem{lemma}[theorem]{Lemma}

\newtheorem{corollary}[theorem]{Corollary}
\newtheorem{remark}[theorem]{Remark}
\newtheorem{definition}[theorem]{Definition}

\newcommand{\T}{\mathbb{T}}
\newcommand{\R}{\mathbb{R}}
\newcommand{\C}{\mathbb{C}}
\newcommand{\norm}[1]{\left\lVert#1\right\rVert}
\newcommand{\abs}[1]{\left\vert#1\right\vert}
\newcommand{\inner}[2]{\left\langle #1, #2 \right\rangle}
\newcommand{\eps}{\varepsilon}
\newcommand{\La}{\Lambda}

\title{Global Well-Posedness and Uniform Long-Time Control for a Class of Periodic NLS-Dysthe Equations}
\author{Andrea Belloni\footnote{Dipartimento di Matematica, Politecnico di Milano, Piazza Leonardo Da Vinci 32, 20133, Milano, Italy email\,\,:\,\,\texttt{andrea.belloni@polimi.it, filippo.giuliani@polimi.it}}, Filippo Giuliani$^*$.}
\date{}

\begin{document}

\maketitle
\begin{abstract}

We study a class of periodic NLS-Dysthe equations of the form
\[
i\partial_t u+a\partial_x^2u+b|u|^2u
+i\eps\left(c\partial_x^3u+d|u|^2\partial_xu
+eu^2\partial_x\overline u\right)=0,
\qquad x\in\mathbb T,
\]
where $a,b,c,d,e\in\mathbb R$, $abc\neq0$, and $ \eps >0$. For $0<\eps\ll1$, equations of this type arise as higher-order modulation models for some nonlinear dispersive PDEs, including the Klein-Gordon equation and the Serre-Green-Naghdi equation modeling the evolution of nonlinear shallow water waves.

We prove local well-posedness in $H^s(\mathbb T)$ for $s>3/2$ for arbitrary admissible coefficients, and establish an $H^1$ continuation criterion for $s\ge2$. Under the additional assumption $e\neq0$, a lower-order conservation law provides uniform $H^1$ control and, together with the continuation criterion, yields global well-posedness for $s\ge2$. Unlike the previous $H^1$ global theory based on Takaoka's periodic well-posedness estimates \cite{takaoka2000well} our result does not require any non-resonance condition on the coefficients.

Our approach relies on the construction of modified higher-order energies, following the strategy developed in \cite{PTV23, hayashi2025global}.

Motivated by higher-order modulation theory, we exploit the conservation laws of cubic NLS and their invariance under the commuting complex mKdV flow to establish Sobolev bounds uniform in the modulation parameter $\varepsilon$ on time intervals of order $\varepsilon^{-1}$. This corresponds to physical times of order $\varepsilon^{-3}$, which is the natural time scale anticipated for the NLS-Dysthe approximation. 
\end{abstract}
\tableofcontents

\section{Introduction}

Modulation theory focuses on the study of the slow evolution of nearly monochromatic wavetrains in nonlinear dispersive systems. The first formulations in this field are Whitham's averaged-Lagrangian theory in \cite{whitham1965lagrangian} and the NLS-modulation theory for deep-water gravity waves derived by Zakharov in \cite{zakharov1968stability}. In its standard form, the modulation ansatz has the schematic structure
\begin{equation}\label{ansatz}
U(t,x)\sim \eps A\big(\eps(x-c_g t),\eps^2t\big)e^{i(k_0x-\omega_0t)}+\mathrm{c.c.} \, \, , \qquad 0< \eps \ll 1,
\end{equation}
where $k_0$ is the carrier wavenumber, $\omega_0=\omega(k_0)$ is the corresponding frequency determined by the linear dispersion relation of the underlying system and $c_g=\omega'(k_0)$ is the associated group velocity. 
Substituting the ansatz \eqref{ansatz} into the underlying dispersive equations and matching powers of $\varepsilon$ leads to a cubic nonlinear Schr\"odinger equation governing the slow evolution of the envelope $A$. The resulting wave-packet approximation is expected to describe the physical dynamics on time scales of order $\varepsilon^{-2}$. 
The rigorous justification of this approximation has been extensively investigated for semilinear and quasilinear dispersive systems, including Klein-Gordon equations and gravity water waves; see, among others, 
\cite{craig1992modulation,schneider1998kdv,schneider1998hyperbolic,dullschneider2006boussinesq,schneider2011miura,schneiderwayne2011,totzwu2012,chirilusbruckner2015nls,totz2015three,dullschneiderwayne2016,dull2017kleingordon,ifrimtataru2019,dull2021waterwaves}
 and the references therein.

Higher-order modulation equations incorporate corrections to cubic NLS and are expected to provide a more accurate description of wave-packet dynamics. 
Their relevance
has been investigated numerically
\cite{lomei1985,trulsendysthe1996,craigguyennesulem2021}, particularly in connection
with the description of extreme and rogue waves
\cite{dysthekrogstadmuller2008,farazmandsapsis2017,dematteis2018}.\\
The derivation of higher-order NLS modulation equations was pioneered by Dysthe \cite{dysthe1979} in the context of deep-water gravity waves. Related models have subsequently been derived in different physical settings, including nonlinear wave equations and shallow-water models \cite{craigguyennesulem2010}, \cite{congy2019}. 
Throughout this paper, we refer to this family of higher-order modulation models as NLS-Dysthe equations.

\smallskip

Normal-form transformations play an important role in the rigorous justification of NLS approximations on the line, where they are used to eliminate non-resonant interactions; see, for instance, \cite{schneiderwayne2011,dullschneiderwayne2016,ifrimtataru2019}. Extending these approximations beyond the natural NLS time scale is expected to require successive normal-form reductions, a challenging task in the presence of a continuous spectrum and nontrivial resonance manifolds. This motivates the study of periodic wave dynamics, where the discrete spectrum offers a potentially more tractable framework for implementing such a strategy. However, discreteness does not eliminate resonance-related difficulties: exact resonances may obstruct the validity of the NLS approximation \cite{SCHNEIDER2005,SSZ2015}, while near-resonant interactions can give rise to small divisors.

In this perspective, we consider the underlying physical model on the large torus $\mathbb{T}_{\varepsilon^{-1}}:=\mathbb{R}/2\pi\varepsilon^{-1}\mathbb{Z}$, $0<\eps\ll 1$. The large spatial period allows one to retain some of the dispersive effects characteristic of the Euclidean setting on suitable time scales, while the frequency lattice becomes increasingly dense as $\varepsilon\to0$, leading to an accumulation of near-resonant interactions. Under the modulation ansatz \eqref{ansatz}, the introduction of the slow spatial variable leads to an envelope equation on the fixed torus $\mathbb{T}: = \R / 2\pi\mathbb{Z}$.

\medskip

In the present paper we consider the following class of periodic NLS-Dysthe equations
\begin{equation}\label{app:D}
\begin{cases}
i\partial_t u + a\partial_x^2 u + b\abs{u}^2 u
+ i\eps\big(c\partial_x^3 u + d\abs{u}^2\partial_x u + e u^2\partial_x\overline{u}\big)=0,\\
u|_{t=0}=u_0,
\end{cases}
\end{equation}
where $x\in\T$, $t\in\R$, $u_0\in H^s(\T;\C)$, and $a,b,c,d,e\in\R$ are independent of $\eps > 0 $. Throughout the paper we assume $abc\neq0$.

The classical Dysthe equation for deep-water gravity waves \cite{dysthe1979} contains an additional nonlocal mean-flow contribution and thus falls outside the class \eqref{app:D}. However, such a nonlocal term is not a general feature of higher-order modulation equations arising in fluid dynamics. Indeed, an example within the class \eqref{app:D} is provided by the NLS-Dysthe equation coming from the Serre-Green-Naghdi system
\[
\begin{cases}
h_t+(hu)_x=0,\\[2mm]
u_t+u u_x+h_x
=\displaystyle \frac{1}{3h}\partial_x\!\left[h^3\big(u_{xt}+u u_{xx}-u_x^2\big)\right],
\end{cases}
\]
where $x\in\T$, $t\in\R$, $h$ is the total fluid depth and $u$ is the depth-averaged horizontal velocity. 
This is a fully nonlinear, weakly dispersive model for shallow-water waves, whose approximation of the water-wave equations has been rigorously justified, see, for instance, \cite{li2006shallow,lannes2013}. Its higher-order modulation equation has been derived in \cite{congy2019}.

The class \eqref{app:D} also includes higher-order modulation equations arising from the real cubic Klein-Gordon equation. In \cite{craigguyennesulem2010}, a Hamiltonian modulation approach yields an equation with $e=0$ in suitable canonical envelope variables. By contrast, a direct multiple-scales expansion in physical envelope variables leads to an equation of the same form with $e\neq0$ for nonzero carrier wavenumber. This illustrates the non-universality of higher-order modulation equations, whose explicit form depends on the choice of the wave-packet ansatz and envelope variables.

\medskip

The present paper establishes local and global well-posedness results for the periodic NLS-Dysthe equations \eqref{app:D}, together with Sobolev bounds uniform in $\varepsilon$ on time intervals of order $O(\varepsilon^{-1})$. In terms of the original physical variables, where the slow time is given by $T=\varepsilon^2 t$, this corresponds to times of order $O(\varepsilon^{-3})$, the natural time scale anticipated for the higher-order Dysthe approximation, beyond the standard NLS time scale $O(\varepsilon^{-2})$. However, the rigorous justification of periodic NLS-Dysthe approximations on this longer time scale remains an open problem. A necessary step toward such a justification is to establish uniform control of the solutions to the modulation equation over the corresponding time intervals. Our results provide this control without any smallness assumption on the initial envelope.

\medskip

For the well-posedness analysis, it is useful to decouple the higher-order corrections to the cubic NLS from the small modulation parameter. We therefore first study the more general Cauchy problem
\begin{equation}\label{eq:main_pde}
\begin{cases}
i\partial_t u + a\partial_x^2 u + b\abs{u}^2 u
+ i c\partial_x^3 u + i d\abs{u}^2 \partial_x u
+ i e u^2 \partial_x \overline{u} = 0,\\
u|_{t=0} = u_0.
\end{cases}
\end{equation}
The point of considering \eqref{eq:main_pde} is that the local and global arguments do not rely on the asymptotic size of the higher-order coefficients. Before stating our results, we discuss some previous works on this general model. Outside modulation theory, the model \eqref{eq:main_pde} also appears in the description of the nonlinear propagation of pulses in optical fibers (see for instance \cite{kodama1987nonlinear,kodama1985optical}). In this field the equation is usually called a higher-order nonlinear Schr\"odinger equation or Airy--Schr\"odinger equation. The Cauchy problem on the line has been studied extensively. Laurey \cite{laurey1997cauchy} proved local well-posedness at high regularity and global well-posedness in $H^s(\R; \C)$ for $s\geq1$ using conservation laws. Carvajal \cite{carvajal2006sharp} later obtained sharp global well-posedness in $H^s(\R; \C)$, $s>1/4$, under a structural relation among the coefficients, using almost conserved energies and the $I$-method.
In the periodic setting, Takaoka \cite{takaoka2000well} established local well-posedness in $H^s(\mathbb T;\mathbb C)$ for $s\geq 1/2$ under the non-resonance condition
\[
\frac{2a}{3c}\notin\mathbb Z,
\]
which excludes certain resonances of the linear dispersion and plays a crucial role in the underlying Fourier restriction estimates. In contrast, our approach does not impose any arithmetic condition on the coefficients, at the expense of a higher regularity threshold: we obtain local well-posedness for $s>3/2$ and global and long-time results for $s\geq 2$. This level of regularity is consistent with the requirements of rigorous modulation theory, where NLS approximations are typically justified for sufficiently smooth envelope profiles; for instance, the results in \cite{schneiderwayne2011,ifrimtataru2019} assume $H^6$ and $H^3$ regularity, respectively.

The absence of non-resonance assumptions is particularly relevant for the study of \eqref{app:D}. Indeed, Takaoka's condition would become
\[
\frac{2a}{3c\varepsilon}\notin\mathbb Z.
\]
As $\varepsilon$ tends to zero, resonant values accumulate at zero, and arbitrarily small non-resonant values can lie arbitrarily close to resonance. Consequently, estimates requiring a uniform positive lower bound on $\operatorname{dist}\left(\frac{2a}{3c\eps},\mathbb{Z}\right)$ cannot hold throughout the perturbative regime. Our approach avoids this obstruction, allowing us to establish well-posedness and long-time bounds without excluding any values of $\varepsilon>0$.

Our first result is the following.
\begin{theorem}[$H^1$ Continuation criterion]\label{GWPG}
Let $abc\neq 0$, $s>3/2$ and $u_0 \in H^s(\T;\C)$. Then there exist $T>0$ and a unique solution $u \in C([-T,T],H^s(\T,\C))$ to \eqref{eq:main_pde}, and the flow depends continuously on the initial datum. Moreover, if $s\geq2$ and
\[
u\in C((-T_{1,\textnormal{max}},T_{2,\textnormal{max}}),H^s(\T;\C))
\]
is the maximal solution, for some $T_{i,\textnormal{max}}>0$, then the following alternative holds:
\begin{itemize}
\item $(-T_{1,\textnormal{max}},T_{2,\textnormal{max}})=\R$;
\item if $T_{2,\textnormal{max}}<+\infty$, then
\[
\limsup_{t\nearrow T_{2,\textnormal{max}}}\|u(t)\|_{H^1(\T;\C)}=+\infty;
\]
\item if $T_{1,\textnormal{max}}<+\infty$, then
\[
\limsup_{t\searrow -T_{1,\textnormal{max}}}\|u(t)\|_{H^1(\T;\C)}=+\infty.
\]
\end{itemize}
\end{theorem}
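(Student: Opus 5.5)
Our plan is to run a quasilinear energy method built on modified energies, in the spirit of \cite{PTV23, hayashi2025global}, and to close everything through a Bona--Smith type argument. The linear part $a\partial_x^2+ic\partial_x^3$ is skew-adjoint, so it does not appear in any $H^s$ energy identity. The difficulty lies entirely in the derivative nonlinearities $id|u|^2u_x+ieu^2\overline{u}_x$.

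\emph{Energy estimate.} Put $D^s=\langle\partial_x\rangle^s$ and differentiate $\|D^su\|_{L^2}^2$. After commutator estimates (Kato--Ponce), the term coming from $id|u|^2u_x$ contributes $d\int\partial_x(|u|^2)|D^su|^2$ up to acceptable errors, which is harmless once $\|u_x\|_{L^\infty}$ is available. The term coming from $ieu^2\overline{u}_x$ produces the bad quantity
\[
\mathrm{Re}\, i e\int u^2\,\partial_x D^s\overline u\,\overline{D^s u}\,dx,
\]
which has a full derivative loss and cannot be removed by integrating by parts.

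\emph{Modified energy.} We remove this term with a normal-form correction. The leading dispersion $ic\partial_x^3$ acts on the pair $(D^su, D^s\overline u)$ and gives the resonance function $c(\xi_1^3+\xi_2^3+\dots)$. For the interaction $u\,u\,\overline{D^su}\,\overline{D^su}$ with two high frequencies, this function is of size $c\,N^2\cdot(\text{low frequency})$. We therefore add a cubic/quartic correction of the form
\[
E_s(u)=\|D^su\|_{L^2}^2+\mathrm{Re}\,\gamma\int u^2\,\partial_x^{-1}\!\left(D^{s}\overline u\right)\,\overline{D^{s}\overline u}\,dx+\dots,
\]
with $\gamma$ proportional to $e/c$. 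Its time derivative, computed using the $ic\partial_x^3$ part, cancels the bad term. The new error terms come from the $a\partial_x^2$ part and from the nonlinearity, and they have at most the same number of derivatives as a bounded term.

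We must check two things.
\begin{itemize}
\item $E_s\simeq\|u\|_{H^s}^2$ up to a term $C(\|u\|_{H^1})\|u\|_{H^{s-1/2}}^2$, handled by interpolation.
\item The estimate
\[
\frac{d}{dt}E_s\le C(\|u\|_{W^{1,\infty}})\,\|u\|_{H^s}^2.
\]
\end{itemize}
This step, building the correction without any arithmetic assumption on $a/c$ and controlling the errors, is the main obstacle. Because $c\neq0$, the cubic symbol dominates the quadratic one at high frequency, so no small divisors occur for $N\gtrsim|a/c|$. Low frequencies we treat as bounded operators.

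\emph{Local well-posedness for $s>3/2$.} We first regularize, either by a parabolic term $-\delta\partial_x^4$ or by frequency truncation, and obtain uniform bounds from the energy estimate, using $H^s\hookrightarrow W^{1,\infty}$ for $s>3/2$. Uniqueness follows from an $L^2$ difference estimate, where the derivative on the difference is absorbed because $u_x,v_x\in L^\infty$; for the $e$-term a modified $L^2$ energy may again be needed. Continuous dependence comes from the Bona--Smith argument with frequency-regularized data.

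\emph{Continuation for $s\ge2$.} The energy bound involves $\|u\|_{W^{1,\infty}}$, which is not controlled by $H^1$. We therefore run the estimate at the $H^2$ level and use Brezis--Gallouët:
\[
\|u_x\|_{L^\infty}\lesssim\|u\|_{H^{3/2}}\le C(\|u\|_{H^1})\,\|u\|_{H^2}^{1/2}.
\]
A sharper option is to arrange the modified energy so that the coefficient involves only $\|u\|_{L^\infty}\|u_x\|_{L^\infty}$, giving a bound $\frac{d}{dt}E_2\le C(\|u\|_{H^1})\,\|u\|_{H^2}^{2}\log(2+\|u\|_{H^2})$. If $\|u\|_{H^1}$ stays bounded on $[0,T_{\max})$, Gronwall then gives at most double-exponential growth of $\|u\|_{H^2}$, so the $H^2$ norm stays bounded. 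Persistence of regularity, $\frac{d}{dt}E_s\le C(\|u\|_{W^{1,\infty}})\|u\|_{H^s}^2$ with $\|u\|_{W^{1,\infty}}\lesssim\|u\|_{H^2}$, then bounds $\|u\|_{H^s}$. The local existence time depends only on $\|u\|_{H^s}$, so the solution can be extended past $T_{\max}$, which contradicts maximality. The negative-time case follows in the same way by time reversal.
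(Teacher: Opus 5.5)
Your local theory, uniqueness, and Bona--Smith continuous dependence are in line with the paper. The continuation criterion, however, does not close, and that is the real content of the statement.

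\textbf{Where the continuation argument fails.} Your Brezis--Gallou\"et step cannot work at the $H^1$ level, because $\|u_x\|_{L^\infty}$ is \emph{not} logarithmically controlled by $\|u\|_{H^1}$ and $\|u\|_{H^2}$. You are half a derivative below the endpoint for $u_x\in L^\infty$; note also $H^{3/2}\not\hookrightarrow W^{1,\infty}$, so your displayed inequality is false as written. The best available bound is $\|u_x\|_{L^\infty}\lesssim\|u_x\|_{L^2}^{1/2}\|u_{xx}\|_{L^2}^{1/2}$, and it is sharp: take $u_x=\lambda^{1/2}\phi(\lambda x)$. Any energy whose time derivative carries the factor $\|u\|_{L^\infty}\|u_x\|_{L^\infty}$ therefore only gives
\[
\frac{d}{dt}\|u\|_{H^2}^2\le C(\|u\|_{H^1})\,\|u\|_{H^2}^{5/2},
\]
which allows finite-time blow-up, so a bounded $H^1$ norm yields no contradiction. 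The unmodified $H^2$ energy already has exactly this factor, so your ``sharper option'' is not an improvement; that factor is the obstruction itself. What you need is an energy $\mathcal{E}$ with $\bigl|\mathcal{E}-\|u_{xx}\|_{L^2}^2\bigr|\le C(\|u\|_{H^1})$ and $\bigl|\frac{d}{dt}\mathcal{E}\bigr|\le C(\|u\|_{H^1})(1+\|u\|_{H^2}^2)$. Then the Gronwall inequality is linear.

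\textbf{How the paper closes it.} After integrating by parts, $\frac12\frac{d}{dt}\|u_{xx}\|_{L^2}^2=-(4d+2e)T_1-(d+3e)T_2+(\text{good})$, where $T_1=\int|u_{xx}|^2\Re(u\overline{u}_x)$ and $T_2=\int\Re(uu_x\overline{u}_{xx}^2)$. These terms come from \emph{both} derivative nonlinearities. They lose no derivative, but they carry $u_x$ in $L^\infty$. The paper adds the quartic correctors $\mathcal{C}_1=\int|u|^2|u_x|^2$ and $\mathcal{C}_2=\Re\int u^2\overline{u}_x^2$, which are bounded by $C(\|u\|_{H^1})$. Along the Airy part they satisfy $\frac{d}{dt}\mathcal{C}_j=-6cT_j+(\text{good})$: the resonance $\approx 3c\xi^2\eta$ for two high frequencies $\approx\xi$ differing by $\eta$ exactly absorbs the low-frequency factor $\eta$. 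Hence $\mathcal{E}=\|u_{xx}\|_{L^2}^2-\frac{4d+2e}{3c}\mathcal{C}_1-\frac{d+3e}{3c}\mathcal{C}_2$ works, with no arithmetic condition on $a/c$. All remaining terms are of the form $\|u\|_{L^\infty}\|u_x\|_{L^4}^2\|u_{xx}\|_{L^2}\lesssim\|u\|_{H^1}^{5/2}\|u_{xx}\|_{L^2}^{3/2}$.

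\textbf{Your normal form is aimed at the wrong term.} The $e$-term has no derivative loss at the $H^s$ level. Since $\overline{D^su}=D^s\overline{u}$, the integrand $u^2\,\partial_xD^s\overline{u}\,\overline{D^su}=\frac12u^2\partial_x(D^s\overline{u})^2$ integrates by parts, whatever constant multiplies it. So the plain energy with Kato--Ponce already closes the local theory, and uniqueness needs no modified $L^2$ energy. Your $\partial_x^{-1}$ corrector does nothing for the actual obstruction, which appears at $s=2$ as $T_1$ and $T_2$.
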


The local theory is obtained by a regularization argument and Sobolev estimates. The main point in the continuation argument is instead the construction of \emph{modified higher-order energies} which remove the derivative loss in the standard energy estimate. Our implementation is close in spirit to the modified-energy method used for periodic derivative NLS equations \cite{hayashi2025global} and generalized KdV equations \cite{PTV23}; see also the discussion in \cite{visciglia2021modified}.

When $e\neq0$, the $H^1$ continuation criterion can be closed by a lower-order conservation law. More precisely, for smooth solutions we have the conserved quantity
\[
I(u)=\int_{\T}|u_x|^2\,dx
-\frac{d+e}{6c}\int_{\T}|u|^4\,dx
+\frac{3bc-a(d+e)}{3ce}\,\Im\int_{\T}\overline{u}\,u_x\,dx.
\]
Its leading term is $\|u_x\|_{L^2}^2$, while the remaining terms are controlled, at fixed mass, by lower-order quantities. This conservation law is already present, up to notation and normalization, in the real-line theory of Laurey \cite{laurey1997cauchy}; see also \cite{carvajal2006sharp}, where it is recorded explicitly for the same class of higher-order Schr\"odinger equations. The same algebraic identity holds for smooth periodic solutions, since the integrations by parts produce no boundary terms. 
 Consequently, conservation of $I(u)$ and of the $L^2$ norm yields a uniform-in-time $H^1$ bound, which, together with the continuation criterion of Theorem \ref{GWPG}, establishes global well-posedness for $e\neq 0$.

\begin{corollary}[$H^2$ Global well-posedness]\label{e0GWP}
Let $s\geq 2$ and $u_0\in H^s(\T;\C)$. If $abc\neq 0, e\neq0$, then the Cauchy problem \eqref{eq:main_pde} is globally well posed.
\end{corollary}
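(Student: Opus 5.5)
The plan is to combine Theorem \ref{GWPG} with an a priori $H^1$ bound coming from the conservation of the mass $M(u)=\int_\T|u|^2\,dx$ and of the functional $I(u)$. We fix $s\ge2$ and $u_0\in H^s$, and let $u$ be the maximal solution given by Theorem \ref{GWPG}. By the alternative stated there, it suffices to show that $\sup_t\|u(t)\|_{H^1}<\infty$ on the maximal interval. Uniqueness and continuous dependence are already part of the local theory, so this bound gives global well-posedness.

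\textbf{Step 1: conservation laws.} First we verify that $M$ and $I$ are conserved along $H^s$ solutions, $s\ge2$.
\begin{itemize}
\item For smooth solutions ($H^\infty$, or $s$ large enough to differentiate under the integral), conservation of $M$ follows from a direct computation, since every term in the equation is skew-adjoint or gauge-symmetric.
\item For $I$, we compute $\frac{d}{dt}I(u)$ using the equation and integrate by parts on $\T$. The coefficients $-\frac{d+e}{6c}$ and $\frac{3bc-a(d+e)}{3ce}$ are exactly those that cancel the remaining terms; this matches Laurey's identity, and there are no boundary terms. The division by $e$ is where we need $e\neq0$.
\item To pass to $H^s$ data, $s\ge2$, we approximate $u_0$ by smooth data. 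The continuous dependence in Theorem \ref{GWPG} gives convergence in $C([-T',T'],H^s)$ on compact subintervals of the lifespan. Both $M$ and $I$ are continuous on $H^1\supset H^s$, so the identities pass to the limit.
\end{itemize}
We expect this step to be the main technical point: we must make sure that the smooth approximating solutions exist on a common time interval. This also follows from the local theory, since the local existence time depends only on the $H^s$ norm of the data.

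\textbf{Step 2: coercivity.} Using conservation of $M$ and $I$, we bound the lower-order terms of $I$.
\begin{itemize}
\item By Cauchy--Schwarz, $|\Im\int\overline u u_x|\le M^{1/2}\|u_x\|_{L^2}$.
\item By the one-dimensional Gagliardo--Nirenberg inequality on $\T$,
$$\|u\|_{L^4}^4\lesssim \|u\|_{L^2}^3\|u_x\|_{L^2}+\|u\|_{L^2}^4.$$
\end{itemize}
Writing $X=\|u_x(t)\|_{L^2}$, these bounds give
$$X^2\le |I(u_0)|+C(M)\,(X+1).$$
Hence $X\le C(\|u_0\|_{H^1})$ uniformly in $t$, and together with mass conservation we get $\|u(t)\|_{H^1}\le C(\|u_0\|_{H^1})$.

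\textbf{Step 3: conclusion.} If $T_{2,\max}<\infty$, Theorem \ref{GWPG} forces $\limsup\|u(t)\|_{H^1}=\infty$, which contradicts Step 2. The same argument applies backward in time, so the maximal solution is global.
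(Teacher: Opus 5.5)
Your proof is correct, but it makes the conservation of $I$ rigorous by a different route from the paper.

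\textbf{How the paper does it.} The paper never shows that $I$ is conserved along the exact $H^s$ solution. It differentiates $I(u_N)$ along the Galerkin approximations \eqref{eq:approx_problem}. There the $L^4$ term leaves a projection remainder $\tilde R_N=O\big(N^{-1}\|u_N\|_{H^1}^6\big)$. The paper then closes a nonlinear integral inequality to bound $\|u_N\|_{H^1}$ on $[0,\widetilde K_2N]$. Finally it passes to the limit $u_N\to u$ in $H^1$ on $[0,t^*]$ to contradict $H^1$ blow-up.

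\textbf{How you do it.} You prove $\frac{d}{dt}I=0$ exactly for smooth solutions of \eqref{eq:main_pde}. The cancellation you assert does hold: it is the computation \eqref{eq:deriv_kinetic}--\eqref{Term:K} with the projection remainder removed. You then transfer the identity to $H^s$ data by density and Theorem \ref{thm:continuous_dependence}, using that $I$ and the mass are continuous on $H^1$.

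\textbf{Trade-offs.} Your route gives genuine conservation laws and a time-independent bound $\sup_t\|u(t)\|_{H^1}\le C(\|u_0\|_{H^1})$ directly. It avoids both the truncation error and the ODE comparison. The paper's route only needs the approximate problem and the convergence $u_N\to u$, so it does not invoke continuous dependence or persistence of regularity for the full equation. It is also the same almost-conservation scheme reused in Section \ref{per:DNLS}, where exact conservation is unavailable.

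\textbf{Point to make explicit.} In Step 1 the issue is not only that the approximating solutions share a common $H^s$ lifespan. They must also be smooth on that whole interval, say in $C([0,T],H^\sigma)$ with $\sigma\ge4$. Then $u\in C^1([0,T],H^{\sigma-3})$, $t\mapsto I(u(t))$ is $C^1$, and the integrations by parts are legitimate. The local existence time of $J_ku_0$ in $H^\sigma$ depends on $\|J_ku_0\|_{H^\sigma}$, which may blow up as $k\to\infty$, so local theory alone does not give this. What you need is persistence of regularity. It follows from the tame bound $\frac{d}{dt}\|u\|_{H^\sigma}^2\lesssim\|u\|_{W^{1,\infty}}^2\|u\|_{H^\sigma}^2$ (the first inequality in \eqref{eq:diff_ineq_Hs}) together with $H^s\hookrightarrow W^{1,\infty}$. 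Equivalently, use the $s>2$ part of the proof of Theorem \ref{GWPG}, which shows the $H^\sigma$ and $H^2$ maximal times coincide. With that sentence added, your argument is complete.
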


\begin{remark}
The assumption $e\neq 0$ is naturally satisfied by higher-order modulation equations arising from relevant dispersive models. In particular, it occurs in the modulation equation derived from the Serre-Green-Naghdi system \cite{congy2019} for generic carrier wavenumbers, as well as in the real cubic Klein-Gordon equation, when the modulation expansion is performed directly in the physical envelope variables, for nonzero carrier wavenumber.
\end{remark}

\begin{remark}
The argument used to prove Corollary \ref{e0GWP} can be adapted to the case $e=0$ and $d=3bc/a$, see Remark \ref{rem:Hirota}. This relation gives the integrable Hirota equation introduced in \cite{hirota1973exact}.
\end{remark}

\begin{remark} We mention the work of Scialom and Bragan\c{c}a \cite{scialombraganca2019periodic}, who established periodic global well-posedness in $H^1$ for a coupled system of third-order nonlinear Schr\"odinger equations under suitable structural assumptions. Their system admits \eqref{eq:main_pde} as a scalar reduction, but their global result applies only to the case $e=0$. Moreover, their local well-posedness theory relies on a non-resonance condition on the linear dispersion. 
\end{remark}

We now turn to the perturbative problem \eqref{app:D}. Our approach exploits the higher conservation laws of cubic NLS, which are also preserved by the commuting complex mKdV flow. This allows us to isolate the non-integrable contribution to the evolution of these energies, leading to the following uniform long-time control result.

\begin{theorem}[Uniform long-time control]\label{thm:1}
Let $s\geq2$ and $[s]:=\max\{k\in\mathbb N\,:\,k\leq s\}.$ For any $u_0\in H^s(\T;\C)$, there exists $\tilde K_0>0$ such that the following holds. For any $\tilde K_1>\tilde K_0$, there exists $\tilde K_2>0$ such that, for every $\eps\in(0,1)$, if $u\in C((-T_{1,\textnormal{max}},T_{2,\textnormal{max}}); H^s(\T;\C))$ is the maximal solution of \eqref{app:D}, then
\begin{equation}\label{stima:longtime}
T_{1,\textnormal{max}},T_{2,\textnormal{max}} > \tilde T:=\tilde K_2\eps^{-1}.
\end{equation}
Moreover,
\begin{equation}\label{stability}
\sup_{t\in[-\tilde T,\tilde T]} \|u(t)\|_{H^{[s]}} \leq\tilde K_1.
\end{equation}
The constants $\tilde K_1$ and $\tilde K_2$ are independent of $\eps$.
\end{theorem}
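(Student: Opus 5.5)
We will prove Theorem \ref{thm:1} by a bootstrap on the size of the NLS energies, using the $H^1$ continuation criterion of Theorem \ref{GWPG}. Write \eqref{app:D} as $\partial_t u = X_{\mathrm{NLS}}(u)+\eps X_{\mathrm{D}}(u)$. Here $X_{\mathrm{NLS}}$ is the cubic NLS vector field $i(a\partial_x^2u+b|u|^2u)$, and $X_{\mathrm{D}}(u)=-(c\partial_x^3u+d|u|^2\partial_xu+eu^2\partial_x\overline u)$. The cubic NLS has a hierarchy of conserved quantities $E_k(u)=\|\partial_x^k u\|_{L^2}^2+R_k(u)$ for $k\ge 0$, where $R_k$ consists of lower-order multilinear terms. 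After rescaling, these are also conserved by the complex mKdV flow $i\partial_t u + i\gamma(\partial_x^3u+\beta|u|^2\partial_xu)=0$ with the specific ratio $\beta=6b/a$ (up to normalization) that makes it commute with NLS.

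\textbf{Step 1: decomposition.} We split $X_{\mathrm D}=X_{\mathrm{mKdV}}+X_{\mathrm{rem}}$. Here $X_{\mathrm{mKdV}}=-c(\partial_x^3u+\tfrac{6b}{a}|u|^2\partial_xu)$ is the commuting mKdV field. The remainder is
\[
X_{\mathrm{rem}}=-\big((d-\tfrac{6bc}{a})|u|^2\partial_xu+eu^2\partial_x\overline u\big),
\]
which contains only one derivative and a cubic nonlinearity. Since $E_k$ is conserved by both $X_{\mathrm{NLS}}$ and $X_{\mathrm{mKdV}}$, we get
\[
\frac{d}{dt}E_k(u)=\eps\, dE_k(u)[X_{\mathrm{rem}}(u)].
\]

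\textbf{Step 2: energy estimate without derivative loss.} This is the main obstacle. A naive bound of $dE_k[X_{\mathrm{rem}}]$ involves $\Re\int \partial_x^k\overline u\,\partial_x^{k}(|u|^2\partial_xu)$, which contains $k+1$ derivatives. For the $|u|^2\partial_x u$ term, integration by parts removes the top-order part, since $\Re\int|u|^2\partial_x^k\overline u\,\partial_x^{k+1}u=-\tfrac12\int\partial_x(|u|^2)|\partial_x^ku|^2$. The term $\int u^2\,\partial_x^k\overline u\,\partial_x^{k+1}\overline u$ is not symmetric in this way. Integrating by parts in the real part gives $\Re\int u^2\partial_x(\partial_x^k\overline u)^2/2=-\Re\int u\partial_xu(\partial_x^k\overline u)^2$, which is also fine. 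The remaining top-order contributions come from $\partial_x^k$ hitting the coefficients, and these give terms with $k$ derivatives on two factors, bounded by $\|u\|_{W^{1,\infty}}\|u\|_{H^k}^2$. If some term resists this, we will add a cubic/quartic correction to $E_k$, as in the modified energies of Theorem \ref{GWPG}, to cancel it. We will also need the $R_k$ contributions, whose highest-order parts are of the form $\int|u|^2|\partial_x^{k-1}u|^2$. Their derivative along $X_{\mathrm{rem}}$ costs at most $k$ derivatives, so they are bounded by $C(\|u\|_{H^{k}})$. In all cases we aim for
\[
\Big|\frac{d}{dt}E_k(u)\Big|\le \eps\, C\big(\|u\|_{H^{\max(k,2)}}\big)
\]
for $1\le k\le[s]$, together with the coercivity $\|u\|_{H^k}^2\le E_k+\dots+E_0+C(\|u\|_{H^{k-1}})$ given by Gagliardo--Nirenberg.

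\textbf{Step 3: bootstrap.} Set $\tilde K_0$ to be a constant controlling $\|u_0\|_{H^{[s]}}$ through the $E_k(u_0)$. Given $\tilde K_1>\tilde K_0$, assume $\sup_{|t|\le T}\|u\|_{H^{[s]}}\le\tilde K_1$. Integrating Step 2 gives $|E_k(u(t))-E_k(u_0)|\le \eps |t| C(\tilde K_1)$. By coercivity, arguing inductively in $k$ and using that $E_0$ (the mass) is exactly conserved, we obtain a strictly improved bound $\|u(t)\|_{H^{[s]}}<\tilde K_1$ for $|t|\le \tilde K_2\eps^{-1}$, with $\tilde K_2$ small depending only on $\tilde K_1,\tilde K_0$. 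A continuity argument then gives \eqref{stability}. Since this yields in particular an $H^1$ bound on $[-\tilde T,\tilde T]$, Theorem \ref{GWPG} (with $s\ge2$ and higher $H^s$ regularity propagated by persistence) shows that the maximal times exceed $\tilde T$, which is \eqref{stima:longtime}. All constants are independent of $\eps\in(0,1)$, because $\eps$ appears only as the prefactor in Step 1.
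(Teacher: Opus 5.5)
Your argument is the paper's proof in essence. The NLS energies (the paper's $E_{2s+1}$) are conserved by both the cubic NLS field and the commuting complex mKdV field, so along \eqref{app:D} they feel only $\eps$ times a cubic first-order remainder. The top-order terms of that remainder are removed by exactly the integrations by parts you describe, and no corrections to $E_k$ are needed. The energies then drift by $O(\eps)$ per unit time, and the continuation criterion of Theorem~\ref{GWPG} turns the resulting bound on $|t|\le\tilde K_2\eps^{-1}$ into \eqref{stima:longtime}. Using the whole ladder $E_0,\dots,E_{[s]}$ with inductive coercivity is a harmless variant. The paper uses only the top energy together with mass, via $|E_{2s+1}(u)-\|\partial_x^su\|_{L^2}^2|\le C(\|u\|_{L^2})(1+\|u\|_{H^s}^{\sigma_s})$ with $\sigma_s<2$, and closes with an ODE comparison instead of a bootstrap.

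Two points need fixing.

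\emph{The commuting ratio.} It is $3b/a$, not $6b/a$: the field preserving the hierarchy is $\partial_x^3u+\frac{3b}{a}|u|^2\partial_xu$, i.e.\ the Hirota relation $d=3bc/a$. You can check this already at the $H^1$ level: $\int|u_x|^2-\frac{b}{2a}\int|u|^4$ is preserved by $\partial_tu=\partial_x^3u+\beta|u|^2\partial_xu$ only when $\beta=3b/a$. With $6b/a$ the identity in your Step 1 is false. With the remainder coefficient corrected to $d-\frac{3bc}{a}$, everything downstream is unchanged.

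\emph{Justifying the energy identity.} $\frac{d}{dt}E_k(u(t))$ is not directly defined for a solution that is only in $C(H^{[s]})$, since $\partial_tu\in H^{[s]-3}$, so the identity must be justified by approximation. The paper does all computations on the Galerkin solutions $u_N$ of \eqref{eq:approx_Dysthe}. There an extra projection error gives the bound $K(\eps+N^{-2})(1+\|u_N\|_{H^s}^p)$, which is $O(\eps)$ once $N>\eps^{-1/2}$. The paper then passes to the limit with Lemma~\ref{UNintermediate2} and weak lower semicontinuity. In your bootstrap on exact solutions you could instead proceed as follows:
\begin{itemize}
\item Take smooth data $J_nu_0$; by Theorem~\ref{GWPG}, their solutions stay smooth as long as their $H^1$ norm is bounded.
\item Run the bootstrap for these solutions, with $\tilde K_0$ chosen uniformly in $n$.
\item Pass to the limit with Theorem~\ref{thm:continuous_dependence}, or with the $L^2$ Lipschitz bound and lower semicontinuity.
\end{itemize}
This step is routine, but your proof has to state it.
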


For integer $s\geq 2$, Theorem \ref{thm:1} provides uniform bounds in the same Sobolev space as the initial datum. For non-integer $s>2$, uniform control is obtained in $H^{[s]}$, with a loss of less than one derivative, reflecting the integer-indexed structure of the NLS conservation laws. Such a loss may still be compatible with rigorous modulation arguments, where derivative losses can also arise in the estimates for the approximation error; see, for instance, \cite{schneiderwayne2011,totzwu2012,dullschneiderwayne2016,ifrimtataru2019}. Importantly, these bounds hold for arbitrary initial data independent of $\varepsilon$, without any smallness assumption.
\begin{remark}
In the preprint \cite{kishimotouniform}, Kishimoto established uniform local well-posedness and convergence to cubic NLS as $\varepsilon\to0$ under a non-resonance assumption. Unlike that result, which concerns fixed time intervals, our theorem provides control on intervals of order $O(\varepsilon^{-1})$, without imposing any non-resonance condition.
\end{remark}

We conclude the introduction by comparing our results with the existing well-posedness theory for the nonlocal Dysthe equation arising from deep-water gravity waves. The Dysthe model obtained in this context does not belong to the family \eqref{eq:main_pde}, since it contains an additional nonlocal mean-flow term. In one space dimension, and up to normalization, this contribution has the form $u|D|(|u|^2)$, while in two dimensions the corresponding term involves a nonlocal Fourier multiplier. This changes the analytic structure substantially, and in particular removes the local estimate used to prove our result. In the Euclidean two-dimensional setting, Grande, Kurianski and Staffilani proved local well-posedness in $H^s(\R^2)$ for $s>1$ and ill-posedness below $L^2$ in the sense of lack of $C^3$ regularity of the flow map \cite{grandekurianskistaffilani2021}. Mosincat, Pilod and Saut subsequently pushed the local theory to $H^s(\R^2)$ for every $s>0$ and proved small-data global well-posedness and scattering in the critical space $L^2(\R^2)$ \cite{mosincatpilodsaut2021}. Their analysis makes essential use of the dispersive structure available on $\R^2$, through linear and bilinear Strichartz estimates and Fourier restriction techniques.

The periodic problem presents further challenges, as the dispersive estimates available on $\mathbb R^2$ cannot be directly transferred to the torus. In this direction, Heller and Shao \cite{hellershao2021} established periodic Strichartz and multilinear estimates adapted to the Dysthe dispersion, together with ill-posedness results at low Sobolev regularity. Developing a well-posedness theory for the periodic nonlocal Dysthe equation, possibly extending the energy-based approach of the present paper, remains a natural direction for future research.

\medskip

\noindent\textbf{Plan of the paper.} The paper is organized as follows. In Section \ref{LWP}, we prove local well-posedness for the generalized Cauchy problem \eqref{eq:main_pde} by approximation and Sobolev energy estimates. In Section \ref{MDE}, we construct modified energies and prove the $H^1$ blow-up criterion in Theorem \ref{GWPG}; the conserved quantity above then yields Corollary \ref{e0GWP}. Finally, in Section \ref{per:DNLS}, we study the perturbative model \eqref{app:D} and prove the uniform estimate and the $O(\eps^{-1})$ lifespan of Theorem \ref{thm:1}.

\textbf{Notation:} for any $x,y\in\R$ we write $x\lesssim y$ if there exists $C>0$ such that $x\leq Cy$. If also $y \lesssim x$, we denote $ x \sim y $. In the following sections, $x$ and $y$ may denote functions depending on various parameters. Whenever some parameters appear explicitly in r.h.s. of the inequality, the implicit constant $C$ is understood to depend on the remaining, omitted parameters.

\bigskip

	\noindent
	{\bf Acknowledgements:} A. Belloni and F. Giuliani have been supported by INdAM/GNAMPA.

F. Giuliani has been supported by the INdAM project "Growth of Sobolev norms and energy cascades in dispersive Hamiltonian PDE" CUP E53C25002010001.

\vspace{0.2cm}

\noindent\textbf{AI declaration:}The authors retain full responsibility for the mathematical ideas, accuracy, originality, integrity, and content of the manuscript.

During the preparation and revision of this manuscript, ChatGPT 5.6 (OpenAI) was used to assist with language editing, literature discovery, and consistency checks with all the outputs independently verified by the authors.

\noindent\textbf{Statements and Declarations.} Data sharing is not applicable to this article as no datasets were generated or analyzed during the current study.

\noindent\textbf{Competing Interests}: The authors have no conflicts of interest to declare.

\section{Local Existence}\label{LWP}

\subsection{Functional setting}

{
We establish some standard functional frameworks and notations used throughout this paper. We denote by $L^2(\T; \C)$ the space of square-integrable complex-valued functions on the torus $\T$, identifying functions $f \in L^2(\T; \C)$ with their Fourier sequences $\{\hat{f}(k)\}_{k \in \mathbb{Z}}$. The Fourier coefficients are defined as
\[
\hat{f}(k) := \frac{1}{\sqrt{2\pi}}\int_{\T} f(x) e^{-ikx} \, dx,
\]
and we equip the space with the standard inner product
\[
\langle f, g \rangle_{L^2} := \int_{\T} f(x) \overline{g}(x) \, dx,
\]
which induces the norm 
\[
\|f\|_{L^2}^2 := \sum_{k \in \mathbb{Z}} |\hat{f}(k)|^2 < \infty.
\]
For any real number $s > 0$, we define the fractional Sobolev space $H^s(\T; \C) \subset L^2(\T; \C)$ as the space of functions for which the following norm is finite
\[
\|f\|_{H^s}^2 := \sum_{k \in \mathbb{Z}} (1+|k|^2)^s |\hat{f}(k)|^2 < \infty.
\]
We identify $H^0(\T;\C) = L^2(\T;\C)$. Associated with this space, we introduce the Bessel potential operator $\Lambda^s := (1-\partial_x^2)^{s/2}$, which acts as a Fourier multiplier satisfying
\[
 \widehat{\Lambda^s f}(k) = (1+|k|^2)^{s/2} \hat{f}(k).
\]
Consequently, $\|f\|_{H^s} = \|\Lambda^s f\|_{L^2}$. We also define the space of smooth functions $H^{\infty}(\T;\C)$ as the intersection of all integer-order Sobolev spaces
\[
H^{\infty}(\T;\C) := \bigcap_{m \in \mathbb{N}} H^m(\T;\C).
\]
Furthermore, to rigorously manage the nonlinear terms and commutators in the subsequent estimates, we will systematically rely on the Lebesgue spaces $L^p(\T; \C)$ and the standard Sobolev spaces $W^{k,\infty}(\T; \C)$. In particular, we recall the definition of the $W^{1,\infty}$ norm, given by
\[
\|f\|_{W^{1,\infty}} := \|f\|_{L^\infty} + \|\partial_x f\|_{L^\infty}.
\]
Finally, in the next sections, we will usually drop the domain and codomain from the notation of the functional spaces (e.g., writing $L^2$ or $H^s$ in place of $L^2(\T; \C)$ or $H^s(\T; \C)$) for the sake of brevity.
}

\subsection{Approximate problem}
The study of local existence follows a standard approach based on the use of frequency cut-offs (see for instance \cite{ambrose2015local}, \cite{hayashi2025global}). 
We first prove local existence and uniform bounds for approximate equations of the full problem \eqref{eq:main_pde}.
 From these uniform bounds, we subsequently deduce local existence for the original equation through a rigorous limiting process.

We introduce the spectral projection operator $J_N$. For any cut-off frequency $N \in [1, +\infty)$, its action on a function $f \in L^2$ is defined via its Fourier series as
\begin{equation} \label{eq:JN_def}
(J_N f)(x) = \sum_{\abs{k} \le N} \hat{f}(k) \frac{e^{ikx}}{\sqrt{2\pi}}, \quad x \in \T.
\end{equation}
We summarize the main properties of the operator $J_N$ in the following lemma.

\begin{lemma}[Lemma 2.1 in \cite{hayashi2025global}] \label{lem:JN_properties}
For $N \in [1, +\infty)$, $s \geq 0$ and for any functions $f, g \in L^2$, the following properties hold:
\begin{enumerate}
\item $J_N^2 = J_N,$ \item $\inner{J_N f}{g}_{L^2} = \inner{f}{J_N g}_{L^2}$, \item $\norm{J_N f}_{L^2} \le \norm{f}_{L^2}, \qquad \norm{J_N f}_{H^s} \lesssim N^s\norm{f}_{L^2},$ \item $\norm{J_N f}_{H^s} \le \norm{f}_{H^s}, \qquad \norm{(I-J_N) f}_{L^2} \lesssim \frac{1}{N^s} \norm{f}_{H^s}, \quad \forall f \in H^s$, \item $\lim_{N \to \infty} \norm{J_N f - f}_{H^s} =0, \quad \forall f \in H^s$. 
\end{enumerate}
\end{lemma}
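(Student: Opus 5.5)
All five properties follow from the fact that $J_N$ is the Fourier multiplier with symbol $\mathbbm{1}_{\{|k|\le N\}}$, a real-valued indicator. We therefore work entirely on the Fourier side and use Parseval's identity $\langle f,g\rangle_{L^2}=\sum_k \hat f(k)\overline{\hat g(k)}$, which follows from the normalization $\hat f(k)=(2\pi)^{-1/2}\int_\T f e^{-ikx}\,dx$ fixed above. The plan is to check each item coefficientwise and then sum.

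For (1), note that $\widehat{J_N f}(k)=\mathbbm{1}_{\{|k|\le N\}}\hat f(k)$, and the indicator squared equals itself, so $J_N^2=J_N$. For (2), the symbol is real, so Parseval gives
\[
\langle J_N f,g\rangle_{L^2}=\sum_{|k|\le N}\hat f(k)\overline{\hat g(k)}=\langle f,J_N g\rangle_{L^2}.
\]
For (3) and the first half of (4), restricting the sum to $|k|\le N$ only drops nonnegative terms, which gives the $L^2$ and $H^s$ contractivity. On the same range $(1+k^2)^{s}\le (1+N^2)^{s}\le 2^{s}N^{2s}$ because $N\ge1$. This yields $\|J_Nf\|_{H^s}\le 2^{s/2}N^s\|f\|_{L^2}$.

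For the second half of (4), the modes with $|k|>N$ satisfy $(1+k^2)^{s}\ge N^{2s}$. Hence
\[
\|(I-J_N)f\|_{L^2}^2=\sum_{|k|>N}|\hat f(k)|^2\le N^{-2s}\sum_{|k|>N}(1+k^2)^s|\hat f(k)|^2\le N^{-2s}\|f\|_{H^s}^2 .
\]
For (5), write
\[
\|J_Nf-f\|_{H^s}^2=\sum_{|k|>N}(1+k^2)^s|\hat f(k)|^2 .
\]
This is the tail of a convergent series, since $f\in H^s$, so it tends to $0$ as $N\to\infty$. The argument uses only dominated convergence for series and needs no rate.

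The only step needing any care is keeping track of the constants in (3) and (4): one must use $N\ge1$ so that $1+N^2\lesssim N^2$. The rest is bookkeeping. This is the content of Lemma 2.1 in \cite{hayashi2025global}, and the proof is self-contained given the Fourier conventions of this section.
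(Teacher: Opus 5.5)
Your proof is correct. The paper gives no proof of its own here and only cites \cite{hayashi2025global}. Your coefficientwise Parseval verification is the standard argument behind that citation: $1+N^2\le 2N^2$ for the $H^s$ bound in item (3), and $(1+k^2)^s\ge N^{2s}$ on $|k|>N$ for the tail bound in item (4).
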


 Let us define the linear operator $A$ and the nonlinearity $g(u)$ as
\begin{equation} \label{eq:operators_def}
A = i a\partial_x^2 - c\partial_x^3, \quad g(u) = i b\abs{u}^2 u - d\abs{u}^2 \partial_x u - e u^2 \partial_x \overline{u}.
\end{equation}
We consider the approximate Cauchy problem associated to the NLS-Dysthe equation \eqref{eq:main_pde}
\begin{equation} \label{eq:approx_problem}
\begin{cases}
\partial_t u_N = A u_N + J_N g(J_N u_N), \\
u_N(0) = J_N u_0.
\end{cases}
\end{equation}
We are going to study its solutions following the ideas in \cite{cazenave2003semilinear} (Theorem 3.3.1) and \cite{hayashi2025global} (Lemma 2.2).

\begin{lemma} \label{lem:global_approx}
Let $a,b,c,d,e \in \R$, $abc \neq 0$ and fix $N \ge 1$. For any initial datum $u_0 \in L^2$, there exists a unique global solution $u_N \in C(\R, L^2) \cap C^1(\R, H^\infty)$ of the approximate problem \eqref{eq:approx_problem}. Moreover, the mass conservation holds
\begin{equation}\label{conservaL2}
\norm{u_N(t)}_{L^2}^2 = \norm{J_N u_0}_{L^2}^2 \quad \forall t \in \R.
\end{equation}
\end{lemma}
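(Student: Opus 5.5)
The approach is the classical one for ODEs on a Banach space, following \cite{cazenave2003semilinear} (Theorem 3.3.1) and \cite{hayashi2025global} (Lemma 2.2): rewrite \eqref{eq:approx_problem} in Duhamel form with the unitary group $e^{tA}$, solve it locally by a contraction argument in $L^2$, and then globalize using the conservation law \eqref{conservaL2}. Since $A=ia\partial_x^2-c\partial_x^3$ is a Fourier multiplier with purely imaginary symbol $-iak^2+ick^3$, it generates a unitary group $e^{tA}$ on every $H^s$ that commutes with $J_N$. We look for $u_N\in C([-T,T],L^2)$ solving
\[
u_N(t)=e^{tA}J_Nu_0+\int_0^t e^{(t-\tau)A}J_N g(J_Nu_N(\tau))\,d\tau .
\]

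\textbf{Local Lipschitz bound.} The key estimate is that $F_N(u):=J_Ng(J_Nu)$ is locally Lipschitz on $L^2$ with values in $H^\sigma$ for every $\sigma\ge0$. Fix $\sigma\ge 0$ and $u,v\in L^2$ with $\|u\|_{L^2},\|v\|_{L^2}\le R$. By Lemma \ref{lem:JN_properties}(3), and since $J_Nu$ has finitely many Fourier modes (Bernstein), $\|J_Nu\|_{W^{1,\infty}}\lesssim N^{3/2}\|u\|_{L^2}$. Because $g$ is a cubic polynomial in $(w,\overline w,w_x,\overline w_x)$, this gives
\[
\|F_N(u)-F_N(v)\|_{H^\sigma}\lesssim N^{\sigma}\|g(J_Nu)-g(J_Nv)\|_{L^2}\le C(N,R)\,\|u-v\|_{L^2}.
\]
A Banach fixed point argument then yields a unique solution on $[-T,T]$ with $T=T(N,\|u_0\|_{L^2})$.

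\textbf{Regularity.} The Duhamel formula shows that $u_N-e^{tA}J_Nu_0$ is $C^1$ in time with values in every $H^\sigma$. Moreover $J_Nu_N=u_N$, as one sees by applying $I-J_N$ to the Duhamel formula. Hence $u_N\in C^1(\R,H^\infty)$, and the equation holds classically.

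\textbf{Mass conservation and global existence.} We compute $\frac{d}{dt}\|u_N\|_{L^2}^2=2\Re\langle Au_N,u_N\rangle+2\Re\langle g(u_N),u_N\rangle$, using $J_Nu_N=u_N$ and the self-adjointness of $J_N$ from Lemma \ref{lem:JN_properties}. The first term vanishes by skew-adjointness of $A$. For the second, $\Re\int ib|u|^4=0$, and it remains to check that
\[
\Re\int\big(-d|u|^2u_x\overline u-eu^2\overline u_x\overline u\big)\,dx=0 .
\]
This follows from $\Re(|u|^2u_x\overline u)=\tfrac14\partial_x|u|^4$ and $\Re(u^2\overline u\,\overline u_x)=\Re(|u|^2u\overline u_x)=\tfrac14\partial_x|u|^4$, both of which integrate to zero on $\T$. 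Mass is therefore conserved. Since the existence time depends only on $\|u_N\|_{L^2}$, which stays fixed, iterating the local result gives a global solution.

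\textbf{Main obstacle.} The only delicate point is the derivative terms in $g$. They are harmless here only because the frequency truncation $J_N$ makes them bounded on $L^2$, at the cost of constants depending on $N$; none of these bounds are uniform in $N$. The identity showing that the derivative terms in $g$ do not contribute to $\frac{d}{dt}\|u_N\|_{L^2}^2$ is the step that requires care.
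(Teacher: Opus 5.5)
Your proposal is correct and follows essentially the same route as the paper. Both arguments use the Duhamel formulation with the unitary group $e^{tA}$ and a contraction in $C([-T,T],L^2)$ based on the $N$-dependent Bernstein bounds for $J_N u$. Both then use the identity $J_N u_N=u_N$ to get $H^\infty$ regularity, and globalize via the mass identity, in which the derivative terms reduce to $\tfrac14\partial_x|u|^4$. The only cosmetic difference is that you state the Lipschitz bound directly with values in $H^\sigma$, whereas the paper deduces the $H^\infty$ regularity from the compact Fourier support of the solution.
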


\begin{proof}
We begin by proving local existence. The problem \eqref{eq:approx_problem} can be rewritten in its equivalent integral form using Duhamel's formula
\begin{equation*}
u_N(t) = U(t)J_N u_0 + \int_0^t U(t-s) J_N g(J_N u_N(s))  ds,
\end{equation*}
where $U(t) = e^{tA}$ is the unitary group generated by the linear operator.
Therefore, defining the integral mapping
\begin{equation}\label{Dua}
\Phi(u)(t) = U(t)J_N u_0 + \int_0^t U(t-s) F_N(u(s)) ds,
\end{equation}
we seek a fixed point $\Phi(u) = u$, with $u \in C([-T,T], L^2)$, for some $T>0$. To apply the Banach fixed-point theorem, we need to verify that there exists $T>0$ such that the integral mapping associated with Duhamel's formula maps $C([-T, T]; L^2)$ into itself and it is a contraction. We start by proving that the nonlinear map $F_N(u) = J_N g(J_N u)$ is locally Lipschitz continuous on $L^2$.

Let $u, v \in L^2$ and denote $\tilde{u} = J_N u$ and $\tilde{v} = J_N v$. Then, exploiting the property of the projection operator (Lemma \ref{lem:JN_properties}), we have
\begin{equation} \label{eq:lipschitz_1}
\norm{F_N(u) - F_N(v)}_{L^2} = \norm{J_N(g(\tilde{u}) - g(\tilde{v}))}_{L^2} \le \norm{g(\tilde{u}) - g(\tilde{v})}_{L^2}.
\end{equation}
Based on the definition of $g$ in \eqref{eq:operators_def}, to estimate the right-hand side of \eqref{eq:lipschitz_1} we consider the various contributions separately. For the cubic term without derivatives, Cauchy-Schwarz gives $\|\tilde u\|_{L^\infty} \lesssim N^{1/2}\|u\|_{L^2}$ and then
\begin{equation}\label{aux.aux}
\norm{\abs{\tilde{u}}^2 \tilde{u} - \abs{\tilde{v}}^2 \tilde{v}}_{L^2} \lesssim (\norm{\tilde{u}}_{L^\infty}^2 + \norm{\tilde{v}}_{L^\infty}^2) \norm{\tilde{u} - \tilde{v}}_{L^2} \lesssim N (\norm{u}_{L^2}^2 + \norm{v}_{L^2}^2) \norm{u - v}_{L^2},
\end{equation}
where we applied the third property of Lemma \ref{lem:JN_properties}.

For the terms involving spatial derivatives, we proceed in a similar fashion. For instance we consider
\begin{equation}\label{auxaux:2}
\norm{\abs{\tilde{u}}^2 \partial_x \tilde{u} - \abs{\tilde{v}}^2 \partial_x \tilde{v}}_{L^2} \le \norm{\abs{\tilde{u}}^2 \partial_x(\tilde{u} - \tilde{v})}_{L^2} + \norm{(\abs{\tilde{u}}^2 - \abs{\tilde{v}}^2) \partial_x \tilde{v}}_{L^2}.
\end{equation}
The first term in this decomposition can be bounded by
\begin{equation*}
\norm{\abs{\tilde{u}}^2 \partial_x(\tilde{u} - \tilde{v})}_{L^2} \le \norm{\tilde{u}}_{L^\infty}^2 \norm{\partial_x(\tilde{u} - \tilde{v})}_{L^2} \lesssim N^2 \norm{\tilde{u}}_{L^2}^2 \norm{u - v}_{L^2},
\end{equation*}
by the same argument used in \eqref{aux.aux} combined with the third property of Lemma \ref{lem:JN_properties}. The second term in the r.h.s. of \eqref{auxaux:2} is estimated again by Cauchy-Schwarz, which gives $\|\partial_x \tilde{v}\|_{L^\infty} \lesssim N^{3/2} \|v\|_{L^2}$.

{Generalizing these straightforward estimates to all terms present in $g$, for every $R>0$ we obtain the Lipschitz bound
\begin{equation} \label{eq:K_lipschitz}
\norm{g(\tilde{u}) - g(\tilde{v})}_{L^2} \le K \norm{u - v}_{L^2},
\end{equation}
for all $u,v\in L^2$ such that $\|u\|_{L^2},\|v\|_{L^2}\le R$, where $K=K(N,R)>0$ and $K(N,R)\to+\infty$ as $N\to\infty$ for fixed $R$.
Fix now $R:=2\|u_0\|_{L^2}$ (the case $u_0=0$ being trivial), and consider the closed ball
\[
B_R:=\left\{u\in C([-T,T],L^2):\|u\|_{C([-T,T],L^2)}\le R\right\}.
\]
Since $g(0)=0$, using \eqref{eq:K_lipschitz}, the $L^2$-boundedness of $J_N$, and the fact that $U(t)$ acts as an isometry on $L^2$, for every $u\in B_R$ we have
\[
\|\Phi(u)\|_{C([-T,T],L^2)}
\le \|u_0\|_{L^2}+TKR
=\frac{R}{2}+TKR.
\]
Thus, choosing $T=T(N,R)>0$ so that $TK\le 1/2$, we obtain $\Phi(B_R)\subseteq B_R$.
Moreover, for every $u,v\in B_R$,
\begin{equation*}
\norm{\Phi(u)-\Phi(v)}_{C([-T,T],L^2)}
\le TK\norm{u-v}_{C([-T,T],L^2)}
\le \frac12\norm{u-v}_{C([-T,T],L^2)}.
\end{equation*}
Hence $\Phi$ is a contraction on $B_R$, and the Banach fixed-point theorem yields a unique local solution. Notice that the local existence time depends only on $N$ and $\|u_0\|_{L^2}$ and, with the above choice, tends to zero as $N\to\infty$.
}

Due to the frequency cutoff operator $J_N$, the Fourier support of the solution is compact, which naturally implies $u_N \in C([-T, T], H^\infty)$. Furthermore, recalling the PDE \eqref{eq:approx_problem}, we have $(I-J_N)u_N$ satisfies the linear problem with initial data zero, which gives $u_N(t) = J_N u_N(t)$. It follows  that $\partial_t u_N \in C([-T, T], H^\infty)$, yielding $u_N \in C^1([-T, T], H^\infty)$.

To upgrade this result and establish global existence for the approximate problem (hence, with fixed $N$), we demonstrate the conservation of the $L^2$ mass. Since the local existence time $T$ depends only on the fixed parameter $N$ and the $L^2$-norm of the initial datum, conservation of the $L^2$-norm allows us to iterate the local existence result. 

Computing the time derivative of the $L^2$ norm gives
\begin{equation*}
\frac{1}{2} \frac{d}{dt} \norm{u_N}_{L^2}^2 = \Re \inner{\partial_t u_N}{u_N}_{L^2} = \Re \inner{A u_N}{u_N}_{L^2} + \Re \inner{J_N g(J_N u_N)}{u_N}_{L^2}.
\end{equation*}
Since $A$ is skew-adjoint, the first term vanishes. For the second term, recalling the notation $\tilde{u} = J_N u_N=u_N$ and exploiting the self-adjointness of $J_N$, we are left to prove that
\begin{equation}\label{aggiunta}
\Re \int_{\T} (i b\abs{\tilde{u}}^2 \tilde{u} - d\abs{\tilde{u}}^2 \partial_x \tilde{u} - e \tilde{u}^2 \partial_x \overline{\tilde{u}}) \overline{\tilde{u}} dx = 0.
\end{equation}
The first term vanishes because the integrand $i b\abs{w}^4$ is purely imaginary. For the derivative terms, we compute
\begin{equation*}
-d \Re \int_{\T} \abs{\tilde{u}}^2 (\partial_x \tilde{u}) \overline{\tilde{u}} dx = -d \int_{\T} \abs{\tilde{u}}^2 \frac{1}{2} \partial_x (\abs{\tilde{u}}^2) dx = -\frac{d}{4} \int_{\T} \partial_x (\abs{\tilde{u}}^4) dx = 0,
\end{equation*}
due to the periodic boundary conditions on $\T$. {The remaining term in \eqref{aggiunta} involving $e$ is handled similarly by observing that $\Re(\tilde{u}^2 \overline{\tilde{u}} \partial_x \overline{\tilde{u}}) = \abs{\tilde{u}}^2 \Re(\tilde{u} \partial_x \overline{\tilde{u}}) = \abs{\tilde{u}}^2 \Re(\overline{\tilde{u}} \partial_x \tilde{u})$, which reduces to the exact same total derivative $\frac{1}{4}\partial_x(\abs{\tilde{u}}^4)$ as the previous term.} This yields the exact conservation 
\[
\frac{d}{dt} \norm{u_N}_{L^2}^2 = 0,
\]
ensuring global existence.
\end{proof}

\subsection{Local existence for the original equation}
We can now prove a local-in-time existence result for the original Cauchy problem \eqref{eq:main_pde}, {which provides the first claim of Theorem \ref{GWPG}}.

\begin{lemma} \label{lem:local_existence_true_PDE}
{Fix $s > \frac{3}{2}$. For any initial datum $u_0 \in H^s$, and for any constant $C_1>1$, there exists $C_2>0$ such that the NLS-Dysthe equation \eqref{eq:main_pde} admits a unique solution $u \in C([-T^*, T^*], H^s)$ which satisfies the bound
\begin{equation} \label{eq:local_bound}
\sup_{t \in [-T^*, T^*]} \norm{u(t)}_{H^s} \le C_1 \norm{u_0}_{H^s},
\end{equation}
 with $T^*:=C_2 \|u_0\|_{H^s}^{-2}$, when $u_0 \neq 0$; otherwise $T^*=\infty$.} 
\end{lemma}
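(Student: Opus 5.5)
We prove Lemma \ref{lem:local_existence_true_PDE} by uniform-in-$N$ energy estimates for the approximate solutions $u_N$ of Lemma \ref{lem:global_approx}, followed by a compactness/Cauchy argument as $N\to\infty$.

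\textbf{Step 1: the a priori estimate.} We differentiate $\|\La^s u_N\|_{L^2}^2$ in time. The linear part $A$ is skew-adjoint and commutes with $\La^s$ and $J_N$, so it drops out. Since $u_N=J_N u_N$ and $J_N$ is self-adjoint, the nonlinear contribution is
\[
\Re\inner{\La^s g(u_N)}{\La^s u_N}_{L^2}.
\]
For the cubic term $ib|u|^2u$ we use the algebra property of $H^s$, $s>1/2$, which gives a bound $\lesssim\|u_N\|_{H^s}^4$.

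The derivative terms are the real issue. We write
\[
\La^s(|u|^2\partial_x u)=|u|^2\partial_x\La^s u+[\La^s,|u|^2]\partial_x u .
\]
For the commutator we use the Kato--Ponce estimate
\[
\|[\La^s,f]\partial_x u\|_{L^2}\lesssim \|\partial_x f\|_{L^\infty}\|u\|_{H^s}+\|f\|_{H^s}\|\partial_x u\|_{L^\infty}.
\]
Since $s>3/2$ we have $H^s\hookrightarrow W^{1,\infty}$, so this term is $\lesssim\|u\|_{H^s}^3$. For the main term we integrate by parts:
\[
\Re\int|u|^2\,\partial_x\La^s u\,\overline{\La^s u}\,dx=-\tfrac12\int\partial_x(|u|^2)\,|\La^s u|^2\,dx .
\]

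The $e$-term is handled the same way. Its top-order piece is $\Re\int u^2\,\partial_x\overline{\La^s u}\;\overline{\La^s u}\,dx$. Writing $w=\La^s u$, this equals
\[
\tfrac12\Re\int u^2\,\partial_x\big(\overline{w}^2\big)\,dx=-\tfrac12\Re\int\partial_x(u^2)\,\overline w^2\,dx ,
\]
which is bounded by $\|u\|_{W^{1,\infty}}\|u\|_{L^\infty}\|u\|_{H^s}^2$. This is the key observation, and the step I expect to need the most care: no derivative is lost, because the top-order terms of both derivative nonlinearities are (real parts of) total derivatives. The remaining commutator terms are again controlled by Kato--Ponce.

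Altogether, with constants uniform in $N$,
\[
\frac{d}{dt}\|u_N\|_{H^s}^2\le C\big(\|u_N\|_{H^s}^4+\|u_N\|_{H^s}^2\|u_N\|_{H^s}^2\big)\lesssim\|u_N\|_{H^s}^4 .
\]
Comparing with the ODE $y'=Cy^2$ and using $\|J_Nu_0\|_{H^s}\le\|u_0\|_{H^s}$ gives
\[
\|u_N(t)\|_{H^s}^2\le \frac{\|u_0\|_{H^s}^2}{1-C|t|\,\|u_0\|_{H^s}^2}.
\]
Hence $\|u_N(t)\|_{H^s}\le C_1\|u_0\|_{H^s}$ for $|t|\le T^*=C_2\|u_0\|_{H^s}^{-2}$, with $C_2=(1-C_1^{-2})/C$.

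\textbf{Step 2: convergence.} We estimate $u_N-u_M$ in $L^2$, say for $M\ge N$. The top-order terms in the difference equation again integrate by parts using the bounds on $u_N,u_M$ in $W^{1,\infty}$. The projection errors $(I-J_N)$ are $O(N^{-s}\|u\|_{H^s})$ by Lemma \ref{lem:JN_properties}. One caveat: the difference equation contains terms like $|u_M|^2\partial_x(u_N-u_M)$, which are handled by integrating by parts with $|u_M|^2$ as coefficient, but one must check that after inserting $J_N$ the symmetric structure survives up to errors $\lesssim N^{-s}\|\cdot\|_{H^s}\|\cdot\|_{H^1}$, hence small for $s>3/2$. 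Gronwall then gives
\[
\sup_{|t|\le T^*}\|u_N-u_M\|_{L^2}\lesssim N^{-s}+\|J_Nu_0-J_Mu_0\|_{L^2}\to0,
\]
so $(u_N)$ is Cauchy in $C([-T^*,T^*],L^2)$. Interpolating with the uniform $H^s$ bound gives convergence in $C([-T^*,T^*],H^{s'})$ for every $s'<s$. Taking $s'>3/2$, we can pass to the limit in the equation. The limit $u$ lies in $L^\infty H^s\cap C_w H^s$ and satisfies \eqref{eq:local_bound} by weak lower semicontinuity.

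\textbf{Step 3: uniqueness and continuity.} Uniqueness follows from the same $L^2$ difference estimate applied to two solutions in $C([-T^*,T^*],H^s)$. Strong continuity in $H^s$ follows by a Bona--Smith type argument: regularize the datum and show that the approximate solutions converge uniformly in $C([-T^*,T^*],H^s)$, using the energy estimate on differences with frequency-envelope control. Alternatively, one can combine continuity of the norm with weak continuity, via the time-reversibility of the energy inequality.
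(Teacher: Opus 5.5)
Your proposal is correct and follows essentially the same route as the paper. The paper also uses the commutator-plus-integration-by-parts $H^s$ energy estimate for $u_N$ with the same ODE comparison and $C_2=(C_1^2-1)/(C_1^2K_1)$, then an $L^2$ Cauchy argument with interpolation and weak lower semicontinuity, uniqueness via the $L^2$ difference estimate, and $H^s$-continuity via your second alternative: weak continuity plus $\limsup_{t\to t_0}\|u(t)\|_{H^s}\le\|u(t_0)\|_{H^s}$, which the paper obtains by restarting the truncated scheme at $t_0$ with datum $J_Nu(t_0)$. Two slips are harmless. The rate after Gronwall should be $N^{-s/2}$ or $N^{-(s-1)}$ rather than $N^{-s}$, since the projection error enters additively. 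Your caveat about $J_N$ is moot, because $J_M(u_N-u_M)=u_N-u_M$ for $M\ge N$.
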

{The proof is obtained by the following lemmas.
\begin{lemma}\label{UNintermediate}
Fix $s > \frac{3}{2}$. For any initial datum $u_0 \in H^s$, and for any constant $C_1>1$, there exists $C_2>0$ such that the approximate Cauchy problem \eqref{eq:approx_problem} admits a unique solution $u_N \in C(\mathbb{R}, H^\infty)$ which satisfies the uniform bound
\begin{equation} \label{eq:claim_1}
 \sup_{t \in [-T^*, T^*]} \norm{u_N(t)}_{H^s} \le C_1 \norm{u_0}_{H^s} \qquad \forall N\geq 1,
\end{equation}
 with $T^*:=C_2 \|u_0\|_{H^s}^{-2}$, when $u_0 \neq 0$; otherwise $T^*=\infty$.
\end{lemma}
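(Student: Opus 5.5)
We will run an a priori $H^s$ energy estimate for $u_N$ that is uniform in $N$, and close it by a continuity (bootstrap) argument. Lemma \ref{lem:global_approx} already gives a unique global solution $u_N\in C^1(\R,H^\infty)$ with $u_N=J_Nu_N$. So only the bound \eqref{eq:claim_1} needs proof. Set $E_N(t):=\|\Lambda^s u_N(t)\|_{L^2}^2$. Since $\Lambda^s$ commutes with $A$ and with $J_N$, and $A$ is skew-adjoint, we get
\[
\tfrac12\tfrac{d}{dt}E_N=\Re\langle \Lambda^s g(u_N),\Lambda^s u_N\rangle_{L^2},
\]
where we used $J_N u_N=u_N$ and self-adjointness of $J_N$. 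The goal is the differential inequality $\frac{d}{dt}E_N\le C\|u_N\|_{W^{1,\infty}}^2E_N\lesssim E_N^2$, with $C$ independent of $N$. The second inequality holds because $H^s\hookrightarrow W^{1,\infty}$ for $s>3/2$.

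The zero-order term $ib|u|^2u$ is handled by the standard Moser/Kato--Ponce product estimate, $\|\Lambda^s(|u|^2u)\|_{L^2}\lesssim\|u\|_{L^\infty}^2\|u\|_{H^s}$. For the derivative terms, which are the main obstacle, we split off the top-order part using the Kato--Ponce commutator estimate:
\[
\|[\Lambda^s,f]\partial_x h\|_{L^2}\lesssim \|\partial_x f\|_{L^\infty}\|h\|_{H^s}+\|f\|_{H^s}\|\partial_x h\|_{L^\infty}.
\]
For $-d|u|^2\partial_x u$ with $f=|u|^2$, the remainder is bounded by $\|u\|_{W^{1,\infty}}^2\|u\|_{H^s}$; the bound $\|\,|u|^2\|_{H^s}\lesssim\|u\|_{L^\infty}\|u\|_{H^s}$ enters here. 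The principal part $-d\Re\int |u|^2\partial_x(\Lambda^s u)\overline{\Lambda^s u}$ equals $\frac d2\int\partial_x(|u|^2)|\Lambda^s u|^2$ after integration by parts, so it is bounded by $\|u\|_{W^{1,\infty}}^2E_N$.

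The term $-e\,u^2\partial_x\overline u$ is the delicate one, since its principal part $\Re\int u^2\,\partial_x\overline{\Lambda^s u}\,\overline{\Lambda^s u}$ is not a total derivative. With $w=\Lambda^s u$, however, $\overline w\,\partial_x\overline w=\frac12\partial_x(\overline w^2)$. Integrating by parts gives
\[
-\tfrac12\int \partial_x(u^2)\,\overline w^2,
\]
which is again bounded by $\|u\|_{L^\infty}\|\partial_xu\|_{L^\infty}E_N$. The corresponding commutator remainder is treated exactly as in the $d$-term, using the same commutator estimate with $f=u^2$ and $h=\overline u$. This is the step where uniformity in $N$ could fail, because no loss of derivative may remain. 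The cutoff causes no trouble: $J_N$ only appears through $u_N=J_Nu_N$, and $\Re\langle J_N F,\Lambda^{2s}u_N\rangle=\Re\langle F,\Lambda^{2s}u_N\rangle$.

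Finally, we integrate $\frac{d}{dt}E_N\le C_0E_N^2$, with $C_0$ depending only on $s,b,d,e$. For $|t|<(C_0E_N(0))^{-1}$ this gives $E_N(t)\le E_N(0)/(1-C_0E_N(0)|t|)$. Since $E_N(0)\le\|u_0\|_{H^s}^2$, we fix $C_1>1$ and choose $C_2=(1-C_1^{-2})/C_0$. Then for $|t|\le T^*=C_2\|u_0\|_{H^s}^{-2}$ we obtain $E_N(t)\le C_1^2\|u_0\|_{H^s}^2$ uniformly in $N$, which is \eqref{eq:claim_1}. When $u_0=0$, the solution is $u_N\equiv0$ and $T^*=\infty$.
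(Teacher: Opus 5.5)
Your proposal is correct and follows essentially the same route as the paper. Both use the $H^s$ energy identity, the commutator split with integration by parts on the principal part and Kato--Ponce on the remainder, and the uniform-in-$N$ bound $\bigl|\frac{d}{dt}\|u_N\|_{H^s}^2\bigr|\lesssim\|u_N\|_{H^s}^4$, integrated with the same choice $C_2=(C_1^2-1)/(C_1^2K_1)$. Your explicit handling of the $e$-term via $\overline{w}\,\partial_x\overline{w}=\frac12\partial_x(\overline{w}^2)$ simply spells out what the paper dismisses as a ``completely similar'' estimate.
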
 }

\begin{proof}
Recalling the definitions in \eqref{eq:operators_def}, we compute the evolution of the $H^s$ norm to prove \eqref{eq:claim_1}. From Lemma  \ref{lem:global_approx}, for any $u_0 \in H^s$ there exists a unique solution $u_N \in C(\mathbb{R}, H^\infty)$ of \eqref{eq:approx_problem}. We compute
\begin{align*}
\frac{1}{2} \frac{d}{dt} \norm{u_N}_{H^s}^2& = \Re \inner{\La^s \partial_t u_N}{\La^s u_N}_{L^2} & \\&= \Re \inner{\La^s(A u_N)}{\La^s u_N}_{L^2} + \Re \inner{\La^s(J_N g(J_N u_N))}{\La^s u_N}_{L^2}.&
\end{align*}
The first term is identically zero because the operator $A$ is skew-adjoint and it commutes with $\La^s$. We therefore only need to carefully estimate the second term. Since the spectral projector $J_N$ is self-adjoint, it commutes with $\La^s$ and $J_N^2 = J_N$, we have $J_N u_N = u_N$, and we can rewrite the inner product as
\begin{equation}\label{stima}
\Re \inner{\La^s(J_N g(u_N))}{\La^s u_N}_{L^2} = \Re \inner{\La^s g(u_N)}{\La^s u_N}_{L^2}.
\end{equation}
By the explicit definition of $g(u_N)$ given in \eqref{eq:operators_def}, \eqref{stima} is composed of three distinct terms. Starting with the first one, applying Cauchy-Schwarz and the fractional Leibniz rule
\begin{equation}\label{leib}
\|\Lambda^s(fg)\|_{L^2} \lesssim \|f\|_{L^\infty}\|\Lambda^s g\|_{L^2}+\|g\|_{L^\infty}\|\Lambda^s f\|_{L^2}
\end{equation}
we have
\begin{equation*}
\begin{aligned}
\abs{\Re \int_{\T} \La^s(i b\abs{u_N}^2 u_N) \La^s \overline{u}_N dx}& \lesssim \norm{\La^s(\abs{u_N}^2 u_N)}_{L^2} \norm{\La^s u_N}_{L^2}& \\
&\lesssim \|u_N\|_{L^\infty}^2 \|u_N\|_{H^s}^2.&
\end{aligned}
\end{equation*}
Considering the second term of the nonlinearity $g$, we need to estimate
\begin{equation*}
I:=-d \,  \Re \inner{\La^s(\abs{u_N}^2 \partial_x u_N)}{\La^s u_N}_{L^2}.
\end{equation*}
If we try to apply a direct estimate exactly as before, we will lose a derivative. Therefore, we introduce the commutator operator 
\begin{equation*}
 [\La^s, \abs{u_N}^2] \partial_x u_N:= \La^s(\abs{u_N}^2 \partial_x u_N) - \abs{u_N}^2 \La^s(\partial_x u_N).
\end{equation*}
This substitution leads to
\begin{equation} 
\label{eq:deriv_term_split}
I=- d \, \Re \int_{\T} \abs{u_N}^2 \La^s(\partial_x u_N) \La^s \overline{u}_N dx -d \, \Re \int_{\T} [\La^s, \abs{u_N}^2](\partial_x u_N) \La^s \overline{u}_N dx.
\end{equation}
For the first integral on the right hand side, classical integration by parts yields
\begin{align}
\abs{ \Re \int_{\T} \abs{u_N}^2 (\partial_x \La^s u_N) \La^s \overline{u}_N dx} &= \abs{\int_{\T} \abs{u_N}^2 \frac{1}{2} \partial_x (\abs{\La^s u_N}^2) dx} \nonumber \\
&= \frac{1}{2}\abs{\int_{\T} \partial_x(\abs{u_N}^2) \abs{\La^s u_N}^2 dx} \nonumber \\
&\lesssim \norm{\partial_x(\abs{u_N}^2)}_{L^\infty} \norm{\La^s u_N}_{L^2}^2 \lesssim \|u_N\|_{W^{1,\infty}}^2\norm{u_N}_{H^s}^2. \label{eq:bound_comm_1}
\end{align}

For the second integral in \eqref{eq:deriv_term_split}, we rely on the Kato-Ponce commutator inequality
\begin{align*}
\norm{[\La^s, \abs{u_N}^2] \partial_x u_N}_{L^2} &\lesssim \norm{\partial_x(\abs{u_N}^2)}_{L^\infty} \norm{\La^{s-1}(\partial_x u_N)}_{L^2} + \norm{\La^s(\abs{u_N}^2)}_{L^2} \norm{\partial_x u_N}_{L^\infty} &\\
&\lesssim \|u_N\|_{W^{1,\infty}}^2 \|u_N\|_{H^s},&
\end{align*}
where we used \eqref{leib} again. For the Kato-Ponce estimates and the fractional Leibniz rule used throughout the paper, we refer respectively to \cite{chowdhury2025global,benyi2025fractional} . These references will be understood henceforth whenever such estimates are invoked.

A completely similar estimate can be obtained for the third term of $g$ involving the coefficient $e$. Combining these estimates and applying the embedding $H^s \hookrightarrow W^{1,\infty}$ valid for $s > \frac{3}{2}$,  we finally obtain the nonlinear differential inequality
\begin{equation} \label{eq:diff_ineq_Hs}
\left| \frac{d}{dt} \norm{u_N}_{H^s}^2 \right| \le K_0  \|u_N\|_{W^{1,\infty}}^2\norm{u_N}_{H^s}^2 \leq K_1 \|u_N\|_{H^s}^4,
\end{equation}
for some constants $K_0, K_1 > 0$, uniform in $N$. Integrating \eqref{eq:diff_ineq_Hs}, we obtain
\begin{equation} \label{eq:ODE_solution_Hs}
\norm{u_N(t)}_{H^s}^2 \le \frac{\norm{u_N(0)}_{H^s}^2}{1 - K_1 |t| \norm{u_N(0)}_{H^s}^2} \le \frac{\norm{u_0}_{H^s}^2}{1 - K_1 |t| \norm{u_0}_{H^s}^2}.
\end{equation}
{By taking $C_2:=(C_1^2-1)/(C_1^2K_1)$ we guarantee that \eqref{eq:claim_1} is satisfied for $T^*:=C_2\|u_0\|_{H^s}^{-2}$.}
\end{proof}

{\begin{lemma}\label{UNintermediate2}
Fix $s > \frac{3}{2}$ and initial datum $u_0 \in H^s$. Let $u_N \in C(\mathbb{R}, H^\infty)$ be a solution of \eqref{eq:approx_problem} such that, for some $T>0$, it satisfies the bound
\begin{equation}\label{ips} 
\sup_{t\in[-T,T]}\|u_N(t)\|_{H^s}\leq K \qquad \forall N\geq 1,
\end{equation}
for some $K>0$. Then, there exists a limit $u\in C([-T,T],H^s)$ such that $u_N\to u$ strongly in $C([-T,T],H^{s'})$ for any $0 \leq s' <s$ and $u_N(t)\rightharpoonup u(t)$ weakly in $H^s$ for all $t\in[-T,T]$. 
\end{lemma}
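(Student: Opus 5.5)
The plan is to show that $\{u_N\}$ is Cauchy in $C([-T,T],L^2)$, to upgrade this to $H^{s'}$ for every $s'<s$ by interpolation with the uniform bound \eqref{ips}, and then to get the $H^s$ information on the limit by weak compactness.

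\textbf{Step 1: $L^2$ Cauchy estimate.} Take $M\ge N\ge1$ and set $w=u_M-u_N$. Since $A$ is skew-adjoint, subtracting the two approximate equations gives
\[
\tfrac12\tfrac{d}{dt}\|w\|_{L^2}^2=\Re\inner{J_Mg(u_M)-J_Ng(u_N)}{w}_{L^2}.
\]
Here we used $J_Mu_M=u_M$ and $J_Nu_N=u_N$. Split the right-hand side as
\[
J_Mg(u_M)-J_Ng(u_N)=(J_M-J_N)g(u_M)+J_N\big(g(u_M)-g(u_N)\big).
\]
For the first piece, since $s>3/2$ and $H^{s-1}$ is an algebra, we have $g(u_M)\in H^{s-1}$ with norm $\lesssim K^3$. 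Lemma~\ref{lem:JN_properties} then gives $\|(J_M-J_N)g(u_M)\|_{L^2}\lesssim N^{-(s-1)}K^3$.

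For the second piece, write $J_N=I-(I-J_N)$. The $(I-J_N)$ part has size $\lesssim N^{-(s-1)}K^3$ in $L^2$ by the same argument. For the main part $\Re\inner{g(u_M)-g(u_N)}{w}$, the non-derivative differences are bounded by $K^2\|w\|_{L^2}^2$. The derivative terms are the delicate ones. One writes
\[
|u_M|^2\partial_xu_M-|u_N|^2\partial_xu_N=|u_M|^2\partial_xw+(|u_M|^2-|u_N|^2)\partial_xu_N .
\]
The term $\Re\int|u_M|^2\partial_xw\,\overline w$ integrates by parts to $-\frac12\int\partial_x(|u_M|^2)|w|^2$, which is $\lesssim\|u_M\|_{W^{1,\infty}}^2\|w\|_{L^2}^2$. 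The second term is bounded by $\|\partial_xu_N\|_{L^\infty}\cdot$(a quantity of size $\lesssim K$)$\cdot\|w\|_{L^2}^2$.

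The $e$-term $u^2\partial_x\overline u$ needs separate care. Its leading part is $\Re\int u_M^2\,\partial_x\overline w\,\overline w=\frac12\Re\int u_M^2\,\partial_x(\overline w^2)$, which also integrates by parts onto $u_M^2$. Since $H^s\hookrightarrow W^{1,\infty}$, we obtain
\[
\tfrac{d}{dt}\|w\|_{L^2}^2\lesssim_K\|w\|_{L^2}^2+N^{-(s-1)}\|w\|_{L^2},
\]
with $\|w(0)\|_{L^2}=\|(J_M-J_N)u_0\|_{L^2}\lesssim N^{-s}$. Gronwall then gives $\sup_{[-T,T]}\|w\|_{L^2}\lesssim_{K,T}N^{-(s-1)}\to0$. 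This step, and especially checking that every derivative term can be moved off $w$ (including the $\overline w^2$ structure for $e$), is the main obstacle.

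\textbf{Step 2: upgrade to $H^{s'}$.} By interpolation,
\[
\|w\|_{H^{s'}}\le\|w\|_{L^2}^{1-s'/s}\|w\|_{H^s}^{s'/s}\le(2K)^{s'/s}\|w\|_{L^2}^{1-s'/s}.
\]
Hence $\{u_N\}$ is Cauchy in $C([-T,T],H^{s'})$ and converges to some $u\in C([-T,T],H^{s'})$.

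\textbf{Step 3: $H^s$ regularity of the limit.} For each fixed $t$, $u_N(t)$ is bounded in $H^s$. Every subsequence therefore has a weakly convergent subsequence, and its limit must equal $u(t)$ by the strong $L^2$ convergence. So the whole sequence satisfies $u_N(t)\rightharpoonup u(t)$ in $H^s$, and $\|u(t)\|_{H^s}\le K$. Continuity of $t\mapsto u(t)$ into $H^{s'}$ together with this uniform $H^s$ bound yields weak continuity in $H^s$.

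Strong continuity in $H^s$ requires more work. I would obtain it by a Bona–Smith-type argument. Regularize the data, $u_0^\delta=J_{1/\delta}u_0$, and compare the corresponding limits in $H^s$ using the energy estimate of Lemma~\ref{UNintermediate} for the difference. The quantity $\|u_0^\delta\|_{H^{s+1}}\|u_0-u_0^\delta\|_{L^2}$ should be small, which lets one control the high-frequency tail uniformly in time. Alternatively, one can show that $t\mapsto\|u(t)\|_{H^s}$ is continuous via time reversibility and the same energy inequality; combined with weak continuity, this gives strong continuity.
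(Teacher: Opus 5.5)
Your Steps 1 and 2, and the weak-convergence part of Step 3, are correct and follow the paper's proof. The only difference there is cosmetic: the paper applies the larger cutoff to the difference, so $J_N w=w$ and the extra tail term you estimate via $(I-J_N)$ never appears. The gap is the strong continuity $u\in C([-T,T],H^s)$. It is part of the statement, but you only outline it through two alternatives, and neither closes as written. Your second alternative is the paper's route (weak continuity plus continuity of $t\mapsto\|u(t)\|_{H^s}$), but it misses the essential step. Applied to the original sequence, the energy inequality \eqref{eq:ODE_solution_Hs} recentred at $t_0$, together with weak lower semicontinuity, only gives
\[
\limsup_{t\to t_0}\|u(t)\|_{H^s}\le\liminf_{N\to\infty}\|u_N(t_0)\|_{H^s}.
\]
Since $u_N(t_0)\rightharpoonup u(t_0)$ only weakly, the right-hand side can be strictly larger than $\|u(t_0)\|_{H^s}$. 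The argument therefore closes only at $t_0=0$, where $u_N(0)=J_Nu_0\to u_0$ strongly in $H^s$. Time reversibility does not change this, because the differential inequality is already two-sided in time.

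The missing idea is to restart the scheme at $t_0$. Let $v_N$ solve \eqref{eq:approx_problem} with $v_N(t_0)=J_Nu(t_0)$. Since $\|J_Nu(t_0)\|_{H^s}\le\|u(t_0)\|_{H^s}\le K$, Lemma \ref{UNintermediate} bounds $v_N$ in $H^s$ on $|t-t_0|\le\delta(K)$, uniformly in $N$. Now apply your Step 1 estimate to $v_N$ and $u_N$, two solutions of the same truncated equation (so there is no projection error). Their data satisfy $\|v_N(t_0)-u_N(t_0)\|_{L^2}\le\|(I-J_N)u(t_0)\|_{L^2}+\|u(t_0)-u_N(t_0)\|_{L^2}\to0$, so $v_N\to u$ in $L^2$ uniformly near $t_0$. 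The data of $v_N$ now converge strongly in $H^s$, and the energy inequality for $v_N$ gives $\|u(t)\|_{H^s}^2\le\|u(t_0)\|_{H^s}^2/(1-K_1|t-t_0|\|u(t_0)\|_{H^s}^2)$. This yields upper semicontinuity of the norm at $t_0$, which together with weak continuity gives strong continuity.

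Your first alternative does not supply this either. Lemma \ref{UNintermediate} contains no estimate for differences. A Bona--Smith argument needs an $H^s$ difference estimate that places the derivative loss on the smoother solution and combines $L^2$ Lipschitz stability with Gagliardo--Nirenberg interpolation; this is the content of Lemma \ref{lem:generic_difference}. It also needs $H^s$ bounds on all of $[-T,T]$ for the regularized approximations, which hypothesis \eqref{ips} does not provide.
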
}
\begin{proof}
Following an approach similar to \cite{hayashi2016well}, we prove that the truncated solutions form a Cauchy sequence. Fix any two indices $N, M \ge 1$ with $N > M$, and define the difference $w = u_N - u_M$. Then $w$ directly satisfies the equation
\begin{equation*}
\partial_t w = A w + J_N g(u_N) - J_M g(u_M),
\end{equation*}
with the operators $A$ and $g$ as previously defined in \eqref{eq:operators_def}. We compute the temporal evolution of the $L^2$ norm. Since $A$ is skew-adjoint, we have again
\begin{equation*}
\frac{1}{2} \frac{d}{dt} \norm{w}_{L^2}^2 = \Re \inner{\partial_t w}{w}_{L^2} = \Re \inner{J_N g(u_N) - J_M g(u_M)}{w}_{L^2}.
\end{equation*}
We algebraically decompose the nonlinear difference term as follows
\begin{equation*}
J_N g(u_N) - J_M g(u_M) = J_N(g(u_N) - g(u_M)) + (J_N - J_M) g(u_M).
\end{equation*}
Substituting this identity in the time derivative of the $L^2$ norm we obtain
\begin{equation} \label{eq:w_L2_evol}
\frac{1}{2} \frac{d}{dt} \norm{w}_{L^2}^2 = \Re \inner{g(u_N) - g(u_M)}{J_N w}_{L^2} + \Re \inner{(J_N - J_M)g(u_M)}{w}_{L^2}.
\end{equation}
For the second term in \eqref{eq:w_L2_evol}, we have 
\begin{align}
\abs{\Re \inner{(J_N - J_M)g(u_M)}{w}_{L^2}} &\le \norm{(J_N - J_M)g(u_M)}_{L^2} \norm{w}_{L^2} \nonumber \\
&\le \frac{1}{M^{s-1}} \norm{g(u_M)}_{H^{s-1}} \norm{w}_{L^2} \lesssim \frac{1}{M^{s-1}} \label{eq:error_term_M}
\end{align}
where in the third inequality we use the fourth property of Lemma \ref{lem:JN_properties}, while in the last inequality we use \eqref{ips} and \eqref{conservaL2} which give
\begin{equation}
\label{g:stim}
\begin{aligned}
\norm{g(u_M)}_{H^{s-1}} &\lesssim \norm{u_M}_{H^{s-1}} \norm{u_M}_{H^s}^2 \lesssim 1,&  \\
\norm{w}_{L^2} &\le \norm{u_N(0)}_{L^2} + \norm{u_M(0)}_{L^2} \lesssim \|u_0\|_{L^2},& 
\end{aligned}
\end{equation}
for any $t \in [-T,T]$ and the omitted constants are uniform in $N$ and $M$. Then in equation \eqref{eq:w_L2_evol}, this second term converges uniformly to zero as the index $M \to \infty$.

We now focus on the first crucial term in \eqref{eq:w_L2_evol}. We start by observing that
\begin{equation*}
\Re \inner{g(u_N) - g(u_M)}{J_N w} = \Re \inner{g(u_N) - g(u_M)}{w} + \Re \inner{g(u_N) - g(u_M)}{(I-J_N)u_M},
\end{equation*}
where the second error term is actually zero since $N>M$ and $J_Mu_M=u_M$.
Thus, we only need to consider the three contributions of $\Re \inner{g(u_N) - g(u_M)}{w}_{L^2}$. For example, analyzing the term involving $e$ in the definition of $g(u)$ in \eqref{eq:operators_def} we have 
\begin{align}
\nonumber
I_e &= -e \Re \int_{\T} (u_N^2 \partial_x \overline{u}_N - u_M^2 \partial_x \overline{u}_M) \overline{w} dx \\
\label{aux_1}
&= -e \Re \int_{\T} (u_N^2 \partial_x \overline{w}) \overline{w} dx - e \Re \int_{\T} (u_N + u_M) w (\partial_x \overline{u}_M) \overline{w} dx.
\end{align}
The first part of $I_e$ is easily estimated by integrating by parts
\begin{equation*}
\left|e \int_{\T} \Re \left( u_N^2 \frac{1}{2} \partial_x (\overline{w}^2) \right) dx \right|= \left|\frac{e}{2} \Re \int_{\T} \partial_x(u_N^2) \overline{w}^2 dx \right|\lesssim \norm{u_N}_{L^\infty} \norm{\partial_x u_N}_{L^\infty} \norm{w}_{L^2}^2.
\end{equation*}
While the second part in \eqref{aux_1} is directly bounded by 
\begin{equation*}
\left|e \Re \int_{\T} (u_N + u_M) w (\partial_x \overline{u}_M) \overline{w} dx \right|\lesssim(\norm{u_N}_{L^\infty} + \norm{u_M}_{L^\infty}) \norm{\partial_x u_M}_{L^\infty} \norm{w}_{L^2}^2\,.
\end{equation*}
By the Sobolev embedding $H^s \hookrightarrow W^{1,\infty}$ valid for $s > \frac{3}{2}$ and the uniform bound $K$, we obtain the differential inequality for $w$
\begin{equation} \label{eq:gronwall_w}
\left| \frac{d}{dt} \norm{w}_{L^2}^2 \right| \le K_2 \norm{w}_{L^2}^2 + \frac{K_3}{M^{s-1}},
\end{equation}
for some $K_3,K_2>0$ uniform in $M$ and $N$.
By applying Gronwall's lemma and using $w(0)=(J_N-J_M)u_0$, we obtain
\begin{equation*}
\norm{w(t)}_{L^2}^2 \le \norm{w(0)}_{L^2}^2 e^{K_2 |t|} + \frac{K_3}{K_2} \frac{1}{M^{s-1}} (e^{K_2 |t|} - 1) \lesssim \frac{e^{K_2 T}}{M^{s-1}} \qquad \forall t\in [-T, T].
\end{equation*}
 Therefore, $\sup_{[-T, T]} \norm{w(t)}_{L^2}^2 \to 0$ as $N, M \to \infty$. Thus, $\{u_N\}_{N \in \mathbb{N}}$ is a Cauchy sequence in $C([-T, T]; L^2)$ and it admits a strong limit $u$.

By \eqref{ips} we have that $\{u_N(t)\}_{N \in \mathbb{N}}$ is bounded in $H^s$ with $s>3/2$ uniformly for $t \in [-T, T]$. Then, there exists a function $v(t) \in H^s$ such that $u_N(t) \rightharpoonup v(t)$ weakly in $H^s$. But since $u_N \to u$ strongly in $C([-T, T], L^2)$, uniqueness of the limit implies $v(t) = u(t)$ for all $t  \in [-T, T]$. By interpolation, the convergence is strong for any $0 \leq s' <s$.

Finally, we want to prove that $u \in C([-T, T], H^s)$ (we adapt the argument in Remark (c) of \cite{kato1984nonlinear} to our setting). We recall that the $H^s$ norm is weakly lower semicontinuous with respect to time, {since it follows by standard arguments combining the $L^2$ strong convergence and the uniform bound in $H^s$. Then, for any $t \in [-T,T]$} we have
\begin{equation} \label{eq:liminf_norm}
\norm{u(t)}_{H^s}^2 \le \liminf_{\tau \to t} \norm{u(\tau)}_{H^s}^2
\end{equation}
 In particular, we have $\norm{u_0}_{H^s}^2 \le \liminf_{t \to 0} \norm{u(t)}_{H^s}^2$.
{Moreover, we observe that \eqref{eq:ODE_solution_Hs} is satisfied, and taking the lower limit as $N \to \infty$, the weak limit $u$ naturally satisfies
\begin{equation*}
\norm{u(t)}_{H^s}^2 \le \liminf_{N \to \infty} \norm{u_N(t)}_{H^s}^2 \le \frac{\norm{u_0}_{H^s}^2}{1 -  K_1 \|u_0\|_{H^s}^2 |t|}.
\end{equation*}
Taking the upper limit as $t \to 0$, we get:
\begin{equation} \label{eq:limsup_norm}
\limsup_{t \to 0} \norm{u(t)}_{H^s}^2 \le \norm{u_0}_{H^s}^2
\end{equation}}
{Combining \eqref{eq:liminf_norm} and \eqref{eq:limsup_norm} yields
\[
\|u(t)\|_{H^s}\to\|u_0\|_{H^s} \qquad\text{as }t\to0.
\]
Together with the weak convergence in $H^s$, this implies $u(t)\to u_0$ strongly in $H^s$. The same argument can be restarted at any $t_0\in[-T,T]$. Indeed, let $v_N$ be the solution of the approximate problem with $v_N(t_0)=J_Nu(t_0)$. Since
\[
\|J_Nu(t_0)-u_N(t_0)\|_{L^2} \le \|(J_N-I)u(t_0)\|_{L^2}    +\|u(t_0)-u_N(t_0)\|_{L^2}\to0,
\]
the same estimates leading to \eqref{eq:gronwall_w}, applied to two solutions of the same $N$-truncated equation (so that the projection-error term is absent), yield
\[
\sup_{t\in [t_0-\delta, t_0+\delta]}\|v_N(t)-u_N(t)\|_{L^2}\to 0,
\]
for some universal constant $\delta>0$. Hence $v_N\to u$ in $C([t_0-\delta, t_0+\delta]; L^2)$. Since by Lemma \ref{lem:JN_properties}, it holds $J_Nu(t_0)\to u(t_0)$ strongly in $H^s$, the preceding argument  with $t_0$ in place of $0$, gives strong $H^s$ continuity at $t_0$. We remark that the restart interval can be chosen uniform in $N$ since $\|J_Nu(t_0)\|_{H^s} \leq \|u(t_0)\|_{H^s}$. }
\end{proof}

\begin{proof}[Proof of Lemma \ref{lem:local_existence_true_PDE}]
First we observe that for $T^*:=C_2\|u_0\|_{H^s}^{-2}$ introduced in Lemma \ref{UNintermediate}, \eqref{eq:claim_1} holds. Then we can apply Lemma \ref{UNintermediate2} to state that there exists  $u \in C([-T^*, T^*], H^s)$ such that $u_N \to u$ strongly in $ C([-T^*, T^*], H^{s'})$ for any $0\leq s'<s$ and weakly in $H^s$. Moreover,  by the weak lower semicontinuity of the
$H^s$ norm, for every $t\in[-T^*,T^*]$ we have
\[
\|u(t)\|_{H^s} \leq \liminf_{N \to \infty} \|u_N(t)\|_{H^s} \leq \sup_{N \ge 1} \norm{u_N(t)}_{H^s} \leq C_1 \|u_0\|_{H^s},
\]
where we applied \eqref{eq:claim_1}. Since the previous inequality holds true for any   $t \in [-T^*,T^*]$, we obtain \eqref{eq:local_bound}.

{To conclude the proof of Lemma \ref{lem:local_existence_true_PDE}, it remains to verify that the limit function $u \in C([-T^*, T^*], H^s)$ satisfies the Duhamel formula and it is unique. From the approximate problem, we know that
\begin{equation}\label{duafin_approx}
u_N(t) = U(t)J_N u_0 + \int_0^t U(t-s) J_N g(u_N(s)) ds,
\end{equation}
where $U(t) = e^{tA}$ is the unitary group generated by the skew-adjoint operator $A$ (see \eqref{eq:operators_def}). To pass to the limit as $N \to \infty$, we observe that for the nonlinear term we have
\begin{equation}\label{eq:limit_g}
\|J_N g(u_N) - g(u)\|_{L^2} \leq \|J_N (g(u_N) - g(u))\|_{L^2} + \|(I - J_N)g(u)\|_{L^2}.
\end{equation}
{
We claim that the nonlinear map $g$ is locally Lipschitz from $H^{s'}$ to $L^2$ for any $s' > 3/2$. Indeed, by performing the same estimates used to obtain \eqref{eq:K_lipschitz}, with the only difference of bounding the $W^{1,\infty}$ norm directly by the respective $H^{s'}$ bounds via Sobolev embeddings (and not using the cut-offs), for any $u,v \in H^{s'}$ we have
\[
\|g(u)-g(v)\|_{L^2} \lesssim \|u-v\|_{H^{s'}}.
\]
}
Since $u_N \to u$ strongly in $C([-T^*, T^*], H^{s'}(\T))$ for $3/2< s' < s$, the first term on the right-hand side of \eqref{eq:limit_g} vanishes uniformly in time. The second term strongly converges to zero by the standard properties of the spectral projector (Lemma \ref{lem:JN_properties}). Because $U(t)$ is an isometry on $L^2$, we can take the uniform limit $N \to \infty$ in \eqref{duafin_approx} to conclude that $u(t)$ satisfies the integral equation (in the $L^2$ sense) 
\[
u(t) = U(t) u_0 + \int_0^t U(t-s)g(u(s)) ds, \qquad \forall t \in [-T^*, T^*].
\]
Finally, we study the uniqueness. { Let $u,v \in C([-T, T], H^s)$ be two solutions for some $T>0$. We are going to prove the more general claim 
\begin{equation}\label{lip}
\sup_{t \in [-T,T]} \|u-v\|_{L^2} \lesssim \|u(0)-v(0)\|_{L^2}.
\end{equation}
Since they are continuous in time, their $H^s$ norms are bounded on $[-T, T]$.} 
{Since $s>3/2$, if $w=u-v$, we have
$Aw\in H^{s-3}$ and
$w\in H^s\hookrightarrow H^{3-s}$. By the skew-adjointness of $A$
\[
\Re\langle Aw,w\rangle=0,
\]
where the product is well defined as the duality pairing between $H^{s-3}$ and $H^{3-s}$. This duality consideration is omitted from now on.
}

We then evaluate the time evolution of the $L^2$ distance
\begin{equation*}
\frac{1}{2} \frac{d}{d t} \|u-v\|_{L^2}^2 = \Re \inner{g(u)-g(v)}{u-v}_{L^2} \,.
\end{equation*}

We claim that
\begin{equation*}
| \Re \inner{g(u)-g(v)}{u-v}_{L^2} | \lesssim \| u-v\|^2_{L^2}\,.
\end{equation*}
We show how to bound only the terms involving a derivative in the expression for $g$ (see \eqref{eq:operators_def}). We have, for instance, integrating by parts,
\begin{align*}
 &\Re \int_{\T} (|u|^2 u_x-|v|^2 v_x) \overline{u-v}\,dx= \Re\int_{\T} |u|^2 (u-v)_x \overline{u-v}dx+ \Re\int_{\T} (|u|^2-|v|^2) v_x \overline{u-v}\,dx\\
 &=-\frac{1}{2}\int_{\T} \partial_x(|u|^2) |u-v|^2dx+\Re\int_{\T} (|u|^2-|v|^2) v_x \overline{u-v}\,dx\\
 &=-\frac{1}{2}\int_{\T} \partial_x(|u|^2) |u-v|^2dx+\Re\int_{\T} u v_x (\overline{u-v})^2\,dx+\Re\int_{\T} \overline{v} v_x |u-v|^2dx.
\end{align*}
It is sufficient now to bound the terms with a derivative in $L^{\infty}$ and use Cauchy-Schwarz to estimate the remaining integrals.
This yields the differential inequality
\begin{equation*}
\left| \frac{d}{d t} \|u-v\|_{L^2}^2 \right| \lesssim \|u-v\|_{L^2}^2,
\end{equation*}
which, by Gronwall's lemma gives \eqref{lip}. In particular, applying this estimate on
$[-T^*,T^*]$ to two solutions with the same
initial datum $u_0$, we conclude that they coincide
throughout this interval, proving uniqueness.}
\end{proof}
\subsection{Continuous Dependence on Initial Data}
The principal obstacle in establishing continuous dependence on the initial data stems from the unboundedness of the nonlinearity in the equation \eqref{eq:main_pde}. Evaluating the difference between two solutions $w = u - v$ and estimating its $H^s$ norm via the fractional Leibniz rule yields a bound involving the $H^{s+1}$ norm, resulting in an apparent loss of one derivative. 
To overcome this derivative loss, we employ the Bona-Smith approximation method, originally introduced for the Korteweg-de Vries equation in \cite{bona1975initial}. We consider initial data truncated at frequency $N$. The corresponding regularized solutions are smooth, and their $H^{s+1}$ norms are bounded by $O(N)$. By interpolating the $L^\infty$ norm of the difference, we can carefully balance this $\mathcal{O}(N)$ high-frequency divergence against the $\mathcal{O}(N^{-s})$ low-frequency convergence of the initial data. 

We state the main theorem regarding the continuous dependence on the initial data.
\begin{theorem}\label{thm:continuous_dependence}
Let $s > \frac{3}{2}$. Let $u_0 \in H^s$ and $\{u_{0,k}\}_{k \in \mathbb{N}}\subset H^s$ be a sequence such that $u_{0,k} \to u_0$ strongly in $H^s$. Let $u$ and $u_k$ be the corresponding maximal solutions to the Cauchy problem \eqref{eq:main_pde}, defined on $[0, T_\infty)$ and $[0, T_k)$ respectively. {For any $T \in (0,T_\infty)$, and for every $\varepsilon > 0$, there exists $K \in \mathbb{N}$ such that for all $k > K$, we have $T_k > T$ and
\begin{equation*}
\sup_{t \in [0, T]} \norm{u(t) - u_k(t)}_{H^s} \le \varepsilon.
\end{equation*}}
\end{theorem}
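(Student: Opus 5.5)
We follow the Bona--Smith strategy outlined just before the statement. Fix $T<T_\infty$ and set $R:=\sup_{[0,T]}\|u(t)\|_{H^s}<\infty$. The argument proceeds in four steps.

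\textbf{Step 1 (uniform control on $[0,T]$).} Since $\|u_{0,k}\|_{H^s}\le 2\|u_0\|_{H^s}$ for $k$ large, Lemma \ref{lem:local_existence_true_PDE} gives $u_k$ on a common interval $[0,T^*]$ with $\|u_k\|_{H^s}\le C_1\|u_{0,k}\|_{H^s}$. We then extend uniformly in $k$ to all of $[0,T]$ by a continuity/bootstrap argument. The hypothesis is $\sup\|u_k\|_{H^s}\le 2R+1$. Once Step 4 shows $\|u_k(t)-u(t)\|_{H^s}$ is small on a subinterval, this hypothesis improves, and we can restart from the endpoint with the uniform local time $C_2(2R+1)^{-2}$. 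After finitely many steps this yields $T_k>T$ together with a uniform $H^s$ bound.

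\textbf{Step 2 (regularized solutions).} For $N\ge1$ let $u^N$ and $u^N_k$ be the solutions with data $J_Nu_0$ and $J_Nu_{0,k}$. By the energy inequality \eqref{eq:diff_ineq_Hs}, they exist on the same intervals with the same $H^s$ bounds. Running the same energy estimate at level $s+1$, the coefficient $\|\cdot\|_{W^{1,\infty}}^2$ is still controlled by the $H^s$ norm, so Gronwall gives
\[
\|u^N\|_{H^{s+1}}\lesssim \|J_Nu_0\|_{H^{s+1}}\lesssim N\|u_0\|_{H^s},
\]
and likewise for $u^N_k$. The $L^2$ Lipschitz estimate \eqref{lip} then yields
\[
\|u-u^N\|_{L^2}\lesssim\|(I-J_N)u_0\|_{L^2}=o(N^{-s}),\qquad \|u_k-u_k^N\|_{L^2}\lesssim o(N^{-s}),
\]
where the second bound is uniform in $k$ because $u_{0,k}\to u_0$ in $H^s$, so the tails are uniformly small.

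\textbf{Step 3 (the key $H^s$ estimate for $w=u-u^N$).} We compute $\frac{d}{dt}\|w\|_{H^s}^2$ by the same commutator method as in Lemma \ref{UNintermediate}. Write $g(u)-g(u^N)$ as terms of the form $|u|^2\partial_x w$, plus $(\text{quadratic in }u,u^N)\cdot w\,\partial_x u^N$, plus terms without derivatives. For the transport-type terms $|u|^2\partial_xw$ and $u^2\partial_x\bar w$, integration by parts and Kato--Ponce bound the contribution by $\|w\|_{H^s}^2$ times $H^s$ norms. For the $\bar w$ term one also needs $\Re\int u^2\Lambda^s\partial_x\bar w\,\Lambda^s\bar w$, which is again a total derivative. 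The dangerous term is $\Lambda^s(w\,\partial_x u^N)$. It is bounded by
\[
\|w\|_{H^s}\|u^N\|_{W^{1,\infty}}+\|w\|_{L^\infty}\|u^N\|_{H^{s+1}}.
\]
Using $\|w\|_{L^\infty}\lesssim\|w\|_{L^2}^{1-1/(2s)}\|w\|_{H^s}^{1/(2s)}$ together with Step 2, the product $\|w\|_{L^\infty}\|u^N\|_{H^{s+1}}$ is $o(1)$ as $N\to\infty$. Even more simply, if one uses $\|w\|_{L^\infty}\lesssim\|w\|_{H^{1}}\lesssim \|w\|_{L^2}^{1-1/s}$, this gives $o(N^{-s+1})N=o(1)$ when $s>3/2$.

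Gronwall then gives
\[
\sup_{[0,T]}\|u-u^N\|_{H^s}\lesssim\|(I-J_N)u_0\|_{H^s}+o_N(1),
\]
and the same bound holds for $u_k-u_k^N$ uniformly in $k$.

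\textbf{Step 4 (conclusion).} For fixed $N$, the data $J_Nu_{0,k}\to J_Nu_0$ in $H^{s+1}$. The smooth solutions $u_k^N\to u^N$ in $H^s$ on $[0,T]$ by an energy estimate at level $s$, which loses no derivative here because $\|u^N\|_{H^{s+1}}$ is finite. Write
\[
u-u_k=(u-u^N)+(u^N-u_k^N)+(u_k^N-u_k).
\]
First choose $N$ large to make the first and last terms $\le\varepsilon/3$, then choose $k$ large for the middle term.

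The main obstacle is Step 3: the term $w\,\partial_xu^N$ must be arranged so that the growth $\|u^N\|_{H^{s+1}}\sim N$ is exactly compensated by the $L^2$ smallness $o(N^{-s})$ from Step 2. This must hold uniformly in $k$, which requires the tail uniformity $\sup_k\|(I-J_N)u_{0,k}\|_{H^s}\to0$.
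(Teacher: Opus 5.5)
Your proposal is correct and follows essentially the paper's Bona--Smith argument: regularization by $J_N$, the bound $\|u^N\|_{H^{s+1}}\lesssim N$, the $L^2$ stability \eqref{lip}, placing the derivative on the smooth solution, the three-term splitting with $N$ chosen before $k$, and iteration over subintervals. The only variation is that you close Step 3 by a linear Gronwall argument using the a priori $H^s$ bounds, instead of the nonlinear Gronwall on $\|w\|_{H^s}^{\theta}$ in Lemma \ref{lem:generic_difference}.

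Two small fixes are needed. First, your ``even more simply'' aside is false for $3/2<s<2$, since $o(N^{1-s})\cdot N=o(N^{2-s})$ does not vanish; keep the $L^\infty$ Gagliardo--Nirenberg version, which gives $O(N^{3/2-s})$. Second, in Step 2 the existence of $u^N,u^N_k$ on all of $[0,T]$ does not follow from \eqref{eq:diff_ineq_Hs} alone. It requires the same bootstrap/restart as in Step 1; the paper simply re-regularizes the data $u(t_i),u_k(t_i)$ on each subinterval.
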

\begin{remark}
Since equation \eqref{eq:main_pde} is invariant under
the transformation
\[
u(t,x)\mapsto\overline{u(-t,-x)},
\]
the same result holds for negative times.
\end{remark}
To prove this theorem, we split the analysis into two intermediate lemmas. We first establish the exact growth rate of the high-frequency norm for the regularized solutions.
\begin{lemma} \label{lem:high_freq_bound}
{Fix $s > \frac{3}{2}$. Let $u_0 \in H^s$ and let $\mu^N \in C([0, T_{\max}); H^\infty)$ be the unique maximal solution to \eqref{eq:main_pde} with regularized initial data $\mu^N(0) = J_N u_0$, for some $T_{\max}>0$. For any $C_1>1$, there exist $C_2>0$ and $C_3>1$ uniform in $N$ such that 
\begin{equation} \label{eq:Hs1_bound}
\sup_{t \in [0, \tilde{T}]} \norm{\mu^N(t)}_{H^{s+1}} \le C_{3} N \norm{u_0}_{H^s}, \qquad \sup_{t \in [0, \tilde{T}]} \norm{\mu^N(t)}_{H^{s}} \le C_1 \norm{u_0}_{H^s},
\end{equation}
where $\tilde{T}:=C_2 \|u_0\|_{H^s}^{-2}$ when $u_0\neq 0$; otherwise $\tilde{T}=+\infty$.} 
\end{lemma}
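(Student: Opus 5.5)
The plan is a standard Bona--Smith-type energy argument. I first get the $H^s$ bound from Lemma \ref{lem:local_existence_true_PDE}, and then propagate the $H^{s+1}$ norm on the same interval using a differential inequality that is linear in $\|\mu^N\|_{H^{s+1}}^2$.

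First, since $\|J_Nu_0\|_{H^s}\le\|u_0\|_{H^s}$ by Lemma \ref{lem:JN_properties}, I apply Lemma \ref{lem:local_existence_true_PDE} with datum $J_Nu_0$ and the given $C_1$. This yields a solution on $[0,C_2\|J_Nu_0\|_{H^s}^{-2}]\supseteq[0,\tilde T]$ with $\tilde T:=C_2\|u_0\|_{H^s}^{-2}$. By uniqueness it coincides with $\mu^N$, and it satisfies $\|\mu^N(t)\|_{H^s}\le C_1\|J_Nu_0\|_{H^s}\le C_1\|u_0\|_{H^s}$. This is the second bound in \eqref{eq:Hs1_bound}, with $C_2$ independent of $N$, and it shows $T_{\max}>\tilde T$.

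For the first bound, the datum $J_Nu_0$ lies in $H^\infty$. I also need $\mu^N$ to stay in $H^{s+1}$ on $[0,\tilde T]$, i.e.\ persistence of regularity. I would obtain this by running the approximation scheme of Lemmas \ref{UNintermediate}--\ref{UNintermediate2} at level $s+1$. The cleanest route is to derive the a priori estimate below on the truncated solutions, uniformly in the truncation parameter, and then pass to the limit by weak lower semicontinuity, exactly as in the proof of Lemma \ref{lem:local_existence_true_PDE}. The estimate itself repeats the computation of Lemma \ref{UNintermediate} with $s$ replaced by $s+1$. The linear part drops out by skew-adjointness. The cubic term is bounded through \eqref{leib}. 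The derivative terms are split as in \eqref{eq:deriv_term_split}: the principal part is handled by integrating by parts, and the commutator by Kato--Ponce. The key point is to keep track of the norms, so that every term is bounded by $\|\mu^N\|_{W^{1,\infty}}^2\|\mu^N\|_{H^{s+1}}^2$, as in \eqref{eq:diff_ineq_Hs}. This gives
\[
\Big|\frac{d}{dt}\|\mu^N\|_{H^{s+1}}^2\Big|\le K_0\|\mu^N\|_{W^{1,\infty}}^2\|\mu^N\|_{H^{s+1}}^2\le K_0 C\, C_1^2\|u_0\|_{H^s}^2\,\|\mu^N\|_{H^{s+1}}^2 ,
\]
where I used the embedding $H^s\hookrightarrow W^{1,\infty}$ for $s>3/2$ together with the $H^s$ bound already obtained.

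Gronwall then gives $\|\mu^N(t)\|_{H^{s+1}}\le e^{K t\|u_0\|_{H^s}^2}\|J_Nu_0\|_{H^{s+1}}$ for $t\in[0,\tilde T]$. Since $t\le\tilde T$, we have $Kt\|u_0\|_{H^s}^2\le KC_2$, so the exponential factor is a constant independent of $N$. Finally, $\|J_Nu_0\|_{H^{s+1}}\le(1+N^2)^{1/2}\|J_Nu_0\|_{H^s}\lesssim N\|u_0\|_{H^s}$, because $J_Nu_0$ is supported on frequencies $|k|\le N$ and $N\ge1$. This yields $C_3$.

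The step I expect to be the main obstacle is verifying that the $H^{s+1}$ energy inequality really involves only $W^{1,\infty}$ (hence $H^s$) norms as coefficients. In particular, in the commutator $[\Lambda^{s+1},|\mu|^2]\partial_x\mu$ the term $\|\Lambda^{s+1}(|\mu|^2)\|_{L^2}\|\partial_x\mu\|_{L^\infty}$ must be bounded via \eqref{leib} by $\|\mu\|_{L^\infty}\|\mu\|_{H^{s+1}}\|\mu\|_{W^{1,\infty}}$. A second point is the rigorous justification of the persistence of regularity for the limit solution.
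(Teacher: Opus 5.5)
Your proposal is correct and follows the paper's proof essentially step by step. You get the uniform $H^s$ bound on $[0,\tilde T]$ from Lemma \ref{lem:local_existence_true_PDE} using $\|J_Nu_0\|_{H^s}\le\|u_0\|_{H^s}$, then the $H^{s+1}$ version of the first inequality in \eqref{eq:diff_ineq_Hs} (linear in $\|\mu^N\|_{H^{s+1}}^2$ with a $W^{1,\infty}$ coefficient), then Gronwall over an interval of length $C_2\|u_0\|_{H^s}^{-2}$, and finally $\|J_Nu_0\|_{H^{s+1}}\le\sqrt2\,N\|u_0\|_{H^s}$. Your extra justification of persistence of $H^{s+1}$ regularity through the truncated scheme is a harmless and slightly more careful addition, since the paper simply takes $\mu^N\in C([0,T_{\max});H^\infty)$ as part of the hypothesis.
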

\begin{proof}
Applying the Bessel potential operator $\La^{s+1} = (1-\partial_x^2)^{(s+1)/2}$ to the equation and proceeding exactly as in the proof of Lemma \ref{lem:local_existence_true_PDE}, we isolate the highest-order derivatives.  Thanks to Lemma \ref{lem:local_existence_true_PDE}, for any $C_1>1$ and $u_0 \in H^s$ we have the uniform bound 
\begin{equation}\label{baundmun}
\sup_{[0, T^*]} \norm{\mu^N}_{H^s} \le C_1 \norm{J_N u_0}_{H^s} \leq C_1 \|u_0\|_{H^s}, \qquad s>3/2, \qquad \forall N\geq 1,
\end{equation}
{for some $C_2>0$ independent of the initial datum and $N$, and $T^*:= C_2 \|J_N u_0\|_{H^s}^{-2}.$ Then, using the cut-off properties, we define the time $\tilde{T}:=C_2 \|u_0\|_{H^s}^{-2} \leq T^*$, which is uniform in $N$, and \eqref{baundmun} holds true also on $[0, \tilde{T}]$. }

Then, for any $t \in [0, \tilde{T}]$ the norm $\norm{\mu^N}_{W^{1,\infty}}$ is bounded uniformly in $N$ and applying the first inequality in \eqref{eq:diff_ineq_Hs} for $H^{s+1}$, we have
\begin{equation*}
\frac{d}{dt} \norm{\mu^N}_{H^{s+1}}^2 \lesssim \norm{\mu^N}_{H^{s+1}}^2.
\end{equation*}
Integrating this inequality via Gronwall's lemma, using the third property in Lemma \ref{lem:JN_properties}, for any $t \in [0,\tilde{T}]$ there exists $C_3>0$, independent of $N$, such that
\[
\norm{\mu^N(t)}_{H^{s+1}}\le C_3 \| J_N u_0\|_{H^{s+1}} \leq \sqrt{2} C_3 N\|u_0\|_{H^s},
\]
which concludes the proof.
\end{proof}
We now state a generalized difference estimate that encapsulates the core of the Bona-Smith argument.
\begin{lemma} \label{lem:generic_difference}
{Let $s > \frac{3}{2}$ and $T_0>0$. Let $u \in C([0, T_0]; H^{s+1})$ and $v  \in C([0, T_0]; H^{s})$ be two solutions to \eqref{eq:main_pde}. Then, for any $\theta \in (\frac{1}{s}, 1 - \frac{1}{2s})$, the difference $w = u - v$ satisfies the estimate 
\begin{equation}\label{utile:cont}
\sup_{t \in [0, T_0]} \norm{w(t)}_{H^s} \le C \left( \norm{w(0)}_{H^s} + \sup_{t \in [0, T_0]} \norm{u(t)}_{H^{s+1}}^{\frac{1}{\theta}}  \norm{w(0)}_{L^2} \right),
\end{equation}
where $C > 0$ depends only on $T_0$, $\theta$ and 
\[
\sup_{t \in [0, T_0]} (\norm{u(t)}_{H^s} + \norm{v(t)}_{H^s}).
\]}
\end{lemma}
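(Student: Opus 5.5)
We run two energy estimates for $w=u-v$: an $L^2$ estimate, which closes by itself, and an $H^s$ estimate, whose only bad term is then interpolated between the two. The difference $w$ solves
\[
\partial_t w = A w + g(u)-g(v).
\]

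\textbf{$L^2$ estimate.} This is exactly the uniqueness computation in the proof of Lemma \ref{lem:local_existence_true_PDE}. After integrating by parts in the terms where the derivative falls on $w$, every coefficient is bounded by $\|u\|_{W^{1,\infty}}+\|v\|_{W^{1,\infty}}\lesssim \|u\|_{H^s}+\|v\|_{H^s}$. Gronwall then gives
\[
\sup_{[0,T_0]}\|w(t)\|_{L^2}\le C\|w(0)\|_{L^2}.
\]

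\textbf{$H^s$ estimate.} We compute $\frac12\frac{d}{dt}\|w\|_{H^s}^2=\Re\langle \Lambda^s(g(u)-g(v)),\Lambda^s w\rangle$; the linear part drops out by skew-adjointness. (If needed, this is justified by the usual duality pairing or by regularization.) To avoid losing a derivative on $w$, we expand each derivative term so that the derivative falls on $w$ with coefficients depending on $v$. For the $d$-term this reads
\[
|u|^2u_x-|v|^2v_x=|v|^2 w_x+(|u|^2-|v|^2)u_x ,
\]
and the $e$-term is handled the same way with $v^2\partial_x\overline w+(u^2-v^2)\partial_x\overline u$. The pieces are treated as follows.
\begin{itemize}
\item For $|v|^2 w_x$ we write $\Lambda^s(|v|^2w_x)=|v|^2\partial_x\Lambda^s w+[\Lambda^s,|v|^2]\partial_x w$. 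The first part is integrated by parts as in \eqref{eq:bound_comm_1}, and the commutator is bounded by Kato--Ponce. Both contributions are $\lesssim(\|v\|_{W^{1,\infty}}^2\|w\|_{H^s}+\|v\|_{H^s}\|v\|_{W^{1,\infty}}\|w\|_{W^{1,\infty}})\|w\|_{H^s}\lesssim\|w\|_{H^s}^2$.
\item For the $e$-analogue, the leading term is $\Re\int v^2\,\partial_x\Lambda^s\overline w\,\Lambda^s\overline w=\frac12\Re\int v^2\partial_x(\Lambda^s\overline w)^2$, which is again a total derivative.
\item For $(|u|^2-|v|^2)u_x$, the fractional Leibniz rule \eqref{leib} gives
\[
\|\Lambda^s((|u|^2-|v|^2)u_x)\|_{L^2}\lesssim \|w\|_{H^s}\|u_x\|_{L^\infty}(\|u\|_{H^s}+\|v\|_{H^s})+\|w\|_{L^\infty}(\|u\|_{L^\infty}+\|v\|_{L^\infty})\|u\|_{H^{s+1}} .
\]
\end{itemize}
The cubic term without derivatives is harmless. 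Altogether,
\[
\frac{d}{dt}\|w\|_{H^s}^2\lesssim \|w\|_{H^s}^2+\|u\|_{H^{s+1}}\|w\|_{L^\infty}\|w\|_{H^s}.
\]

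\textbf{Interpolating the bad term.} The main obstacle is the factor $\|u\|_{H^{s+1}}\|w\|_{L^\infty}$, which is where $\theta$ enters. Gagliardo--Nirenberg gives $\|w\|_{L^\infty}\lesssim\|w\|_{L^2}^{1-\frac{1}{2s}}\|w\|_{H^s}^{\frac{1}{2s}}$, and since $\theta<1-\frac{1}{2s}$ we may trade down to $\|w\|_{L^\infty}\lesssim \|w\|_{L^2}^{\theta}\|w\|_{H^s}^{1-\theta}$, using that $\|w\|_{H^s}$ is uniformly bounded. Young's inequality with exponents $\frac{2}{1+\theta}$ and $\frac{2}{1-\theta}$, or more simply after first bounding $\|w\|_{H^s}^{1-\theta}\lesssim 1$, then yields
\[
\|u\|_{H^{s+1}}\|w\|_{L^2}^\theta\|w\|_{H^s}^{2-\theta}\lesssim \|w\|_{H^s}^2+\|u\|_{H^{s+1}}^{2/\theta}\|w\|_{L^2}^2 .
\]
The precise exponent on $\|u\|_{H^{s+1}}$ should come out as $1/\theta$ once $\|w\|_{H^s}$ rather than its square is tracked, and the lower bound $\theta>1/s$ is needed to make the Gagliardo--Nirenberg exponent admissible ($\|w\|_{W^{1,\infty}}$-type control). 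I expect the bookkeeping that matches these exact exponent ranges to be the delicate part.

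\textbf{Conclusion.} Inserting the $L^2$ bound and applying Gronwall on $[0,T_0]$ gives
\[
\|w(t)\|_{H^s}^2\le C\bigl(\|w(0)\|_{H^s}^2+\sup_{[0,T_0]}\|u\|_{H^{s+1}}^{2/\theta}\|w(0)\|_{L^2}^2\bigr),
\]
and taking square roots yields \eqref{utile:cont}.
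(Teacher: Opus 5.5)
Your argument is correct and runs parallel to the paper's proof. You use the same splitting $|u|^2u_x-|v|^2v_x=|v|^2w_x+(|u|^2-|v|^2)u_x$ (the paper writes the second coefficient as $u\overline w+w\overline v$), the same commutator plus integration-by-parts treatment of the term where the derivative falls on $w$, and the fractional Leibniz rule to put the extra derivative on $u$. You also use the same Gagliardo--Nirenberg bound $\|w\|_{L^\infty}\lesssim\|w\|_{L^2}^\theta\|w\|_{H^s}^{1-\theta}$, which needs only $\theta<1-\frac{1}{2s}$ and $\|w\|_{L^2}\le\|w\|_{H^s}$, not a uniform bound on $w$.

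The only real difference is the closing step. The paper divides by $\|w\|_{H^s}^{2-\theta}$, which gives a linear inequality for $\|w\|_{H^s}^\theta$ with forcing $\|u\|_{H^{s+1}}\|w\|_{L^2}^\theta$. It must then treat separately, via uniqueness, the case where $w$ vanishes at some time. You instead apply Young's inequality to get a linear inequality for $\|w\|_{H^s}^2$ with forcing $\|u\|_{H^{s+1}}^{2/\theta}\|w\|_{L^2}^2$. This avoids the division, and after Gronwall and the square root it already gives exactly the exponent $1/\theta$ in \eqref{utile:cont}. So your worry about the final bookkeeping is unfounded.

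A few side remarks in your write-up are off, though none affects the main chain:
\begin{itemize}
\item The Young exponents that produce your displayed inequality are $\frac{2}{2-\theta}$ and $\frac{2}{\theta}$, not $\frac{2}{1+\theta}$ and $\frac{2}{1-\theta}$.
\item The ``simpler'' variant of first bounding $\|w\|_{H^s}^{1-\theta}\lesssim 1$ would only give a term $\|u\|_{H^{s+1}}\|w(0)\|_{L^2}^{\theta}$. That is not the stated estimate and is not implied by it.
\item The lower bound $\theta>\frac1s$ is used nowhere in this lemma, neither in your proof nor in the paper's. It matters only later, in the Bona--Smith argument, to make $N^{\frac1\theta-s}\to0$.
\end{itemize}
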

\begin{proof}
We compute the time evolution of the $H^s$ norm of the difference $w$, understood through a standard spatial regularization argument, as follows
\begin{equation}\label{stima:continua}
\frac{1}{2} \frac{d}{dt} \norm{w}_{H^s}^2 = \Re \inner{\La^s(g(u) - g(v))}{\La^s w}_{L^2}.
\end{equation}
We decompose the differences to place the spatial derivative on the smoother solution $u$. For the term in the nonlinearity $g$ involving the coefficient $d$ (see \eqref{eq:operators_def}), we write
\begin{equation}\label{d:parte}
\abs{u}^2 \partial_x u - \abs{v}^2 \partial_x v = \abs{v}^2 \partial_x w + (u \overline{w} + w \overline{v}) \partial_x u.
\end{equation}
The contribution of the first term in the r.h.s. of \eqref{d:parte} in \eqref{stima:continua} is expanded via the commutator as follows
\begin{equation*}
-d \Re \inner{\La^s(\abs{v}^2 \partial_x w)}{\La^s w}_{L^2} = -d \Re \int_{\T} \abs{v}^2 (\partial_x \La^s w) \overline{\La^s w} dx - d \Re \inner{[\La^s, \abs{v}^2] \partial_x w}{\La^s w}_{L^2}.
\end{equation*}
Integrating the first integral by parts yields 
\[
-d \Re \int_{\T} \abs{v}^2 (\partial_x \La^s w) \overline{\La^s w} dx=\frac{d}{2} \int \partial_x(\abs{v}^2) \abs{\La^s w}^2 dx \lesssim \norm{v}^2_{W^{1,\infty}} \norm{w}_{H^s}^2 \lesssim \norm{w}_{H^s}^2.
\]
For the commutator, since $s > \frac{3}{2}$, the Kato-Ponce inequality provides
\begin{align*}
\norm{[\La^s, \abs{v}^2] \partial_x w}_{L^2} &\lesssim \norm{\partial_x(\abs{v}^2)}_{L^\infty} \norm{\La^{s-1} \partial_x w}_{L^2} + \norm{\La^s(\abs{v}^2)}_{L^2} \norm{\partial_x w}_{L^\infty} &\\
& \lesssim \norm{v}_{W^{1,\infty}}^2 \norm{w}_{H^s}+\| v\|_{H^s}^2 \| w \|_{W^{1, \infty}} \lesssim \norm{w}_{H^s}.&
\end{align*}
We now consider the contribution of the second term in the r.h.s. of \eqref{d:parte} in \eqref{stima:continua}. We define $F := u \overline{w} + w \overline{v}$ and by the fractional Leibniz rule, we have 
\begin{equation*}
\norm{\La^s(F \partial_x u)}_{L^2} \lesssim \norm{F}_{L^\infty} \norm{\La^s \partial_x u}_{L^2} + \norm{\La^s F}_{L^2} \norm{\partial_x u}_{L^\infty}.
\end{equation*}
Using the algebra property of $H^s$ and Sobolev embeddings, $\norm{\La^s F}_{L^2} \lesssim \norm{w}_{H^s}$ and $\norm{\partial_x u}_{L^\infty} \lesssim \|u\|_{H^s}$. Crucially, the derivative loss is localized on $u$, giving $\norm{\La^s \partial_x u}_{L^2} \le \norm{u}_{H^{s+1}}$, which is bounded by hypothesis. {To estimate $\|F\|_{L^\infty}$ we apply the one-dimensional Gagliardo-Nirenberg inequality. First, we recall that since $s > 3/2$, the interval $I_s:=(\frac{1}{s}, 1 - \frac{1}{2s})$ is non-empty, allowing us to fix  $\theta \in I_s$. By the Gagliardo-Nirenberg inequality we have $\norm{w}_{L^\infty} \lesssim \norm{w}_{L^2}^{\theta_0} \norm{w}_{H^s}^{1-\theta_0}$ for any $\theta_0\in (0, 1)$ satisfying $(1-\theta_0)s > 1/2$. Since any $\theta \in I_s$ satisfies the previous bound, we obtain $\norm{F}_{L^\infty} \lesssim  \norm{w}_{L^2}^\theta \norm{w}_{H^s}^{1-\theta}$. }
The terms involving the coefficients $b$ and $e$ (see \eqref{eq:operators_def}) are treated symmetrically. Aggregating all bounds yields the differential inequality
\begin{equation*}
\frac{1}{2} \frac{d}{dt} \norm{w}_{H^s}^2 \lesssim \norm{w}_{H^s}^2 + \norm{u}_{H^{s+1}} \norm{w}_{L^2}^\theta \norm{w}_{H^s}^{2-\theta}.
\end{equation*}

If $w(t_0)=0$ for some $t_0\in[0,T_0]$, uniqueness
implies $w\equiv 0$, and the conclusion is immediate.
We may therefore assume that $\|w(t)\|_{H^s}>0$
throughout $[0,T_0]$.
Dividing the preceding differential inequality by
$\|w\|_{H^s}^{2-\theta}$, we obtain
\[
\frac{d}{dt}\big(\|w\|_{H^s}^{\theta}\big)
\lesssim
\|w\|_{H^s}^{\theta}
+\|u\|_{H^{s+1}}\|w\|_{L^2}^{\theta}.
\]
Using the $L^2$ stability estimate \eqref{lip} and
applying Gronwall's inequality, we deduce
\[
\sup_{t\in[0,T_0]}\|w(t)\|_{H^s}^{\theta}
\lesssim
\|w(0)\|_{H^s}^{\theta}
+
\sup_{t\in[0,T_0]}\|u(t)\|_{H^{s+1}}
\|w(0)\|_{L^2}^{\theta}.
\]
Raising both sides to the power $1/\theta$
yields \eqref{utile:cont}, completing the proof.

\end{proof}
\begin{proof}[Proof of Theorem \ref{thm:continuous_dependence}]
Let $u_0 \in H^s$ and $\{u_{0,k}\}_{k \in \mathbb{N}}\subset H^s$ be such that $u_{0,k} \to u_0$ strongly in $H^s$. Let $u$ and $u_k$ be the corresponding maximal solutions to the Cauchy problem \eqref{eq:main_pde}, defined on $[0, T_\infty)$ and $[0, T_k)$ respectively. 

\medskip

{We define $M_0$ as 
\begin{equation*}
M_0 \coloneqq  \norm{u_0}_{H^s} + 1 < \infty.
\end{equation*}
By the local existence theory provided by Lemma \ref{lem:local_existence_true_PDE}, the lifespan of a solution to the Cauchy problem \eqref{eq:main_pde} with an initial datum of $H^s$-size less than or equal to $M_0>0$ is at least $T_{M_0} := C_0 M_0^{-2} > 0$, for some constant $C_0>0$ independent of the initial datum. 
Since $u_{0,k} \to u_0$ strongly in $H^s$, there exists $K_0>0$ such that $\norm{u_{0,k}}_{H^s} \le M_0$ for any $k \geq K_0$, ensuring that their individual existence times satisfy $T_k > T_{M_0}$, and $T_\infty> T_{M_0}$. This proves that $\inf_{k\geq K_0} T_k>0$.
We now fix $T \in [0,T_\infty)$ and we define the global safety threshold $M_1>0$ 
\begin{equation}\label{M1}
M_1:= \sup_{t \in [0,T]} \|u(t)\|_{H^s}+1< \infty.
\end{equation}
Reasoning as above, since $M_1 \geq M_0$, for any $k\geq K_0$ as above, $T_\infty > T_{M_1}:=C_0 M_1^{-2}>0$. If $T> T_{M_1}$,} we partition the macroscopic interval $[0, T]$ into exact iterations 
\[
\tau \coloneqq \left\lfloor \frac{T}{T_{M_1}} \right\rfloor + 1
\]
 representing subintervals of length at most $T_{M_1}$. Let $t_i = i T_{M_1}$ for $i = 0, \dots, \tau-1$ and $t_\tau = T$. On the other hand, if $ T \leq T_{M_1}$, we perform a single iteration. 

\medskip

We introduce the regularized data $J_N u_0$ and $J_N u_{0,k}$, and denote $\mu^N$ and $\mu_k^N$ as their respective solutions in $C([0,T_{M_1}],H^{s+1})$. By Lemma \ref{lem:high_freq_bound} yields the high-frequency bounds 
\[
\sup_{t\in[0,T_{M_1}]}
\|\mu^N(t)\|_{H^{s+1}}
\leq C N\|u_0\|_{H^s},
\qquad
\sup_{k\geq K_0}\sup_{t\in[0,T_{M_1}]}
\|\mu_k^N(t)\|_{H^{s+1}}
\leq C N\sup_{k\geq K_0}\|u_{0,k}\|_{H^s}.
\]
By the triangle inequality, we decompose the difference into three distinct parts on $[0, T_{M_1} ]$
\begin{equation} \label{eq:triangle_split_0}
\norm{u_k - u}_{H^s} \le \norm{u_k - \mu_k^N}_{H^s} + \norm{\mu_k^N - \mu^N}_{H^s} + \norm{\mu^N - u}_{H^s}.
\end{equation}
Applying Lemma \ref{lem:generic_difference} to the regularization errors (assigning $\mu_k^N$ and $\mu^N$ as smooth reference solutions), we obtain
\begin{align}
\label{1:cont}
\sup_{t \in [0, T_{M_1}]} \norm{u_k - \mu_k^N}_{H^s} &\lesssim \left( \norm{(I - J_N) u_{0,k}}_{H^s} + N^{\frac{1}{\theta}-s} \norm{u_{0,k}}_{H^s}^{1+\frac{1}{\theta}} \right), \\
\label{2:cont}
\sup_{t \in [0, T_{M_1}]} \norm{\mu^N - u}_{H^s} &\lesssim \left( \norm{(I - J_N) u_0}_{H^s} + N^{\frac{1}{\theta}-s} \norm{u_0}_{H^s}^{1+\frac{1}{\theta}} \right),
\end{align}
where we used the fourth property of Lemma \ref{lem:JN_properties} to bound the third norm in the r.h.s. of \eqref{utile:cont}. 

{We recall that $\theta > \frac{1}{s}$, so the critical exponent $\frac{1}{\theta} - s$ is strictly negative. Furthermore, the strong convergence $u_{0,k} \to u_0$ in $H^s$ guarantees that $\{u_{0,k}\}_{k \in \mathbb{N}}$ is uniformly bounded in $H^s$. Therefore, we can make $N^{\frac{1}{\theta}-s}\Vert{}u_0\Vert{}_{H^s} + N^{\frac{1}{\theta}-s} \sup_{k \in \mathbb{N}} \Vert{}u_{0,k}\Vert{}_{H^s}$ arbitrarily small by choosing a cutoff $N$ sufficiently large, depending only on $u_0$ and the uniform $H^s$-bound of $\{u_{0,k}\}_k$. Similarly, since the first term in the r.h.s. of \eqref{2:cont} is a tail that goes to zero for $N \to \infty$, it can also be made arbitrarily small. For the first term on the r.h.s. of \eqref{1:cont}, we combine the previous limit with the convergence $u_{0,k} \to u_0$: by a triangle inequality argument, this is controlled by choosing two independent $N, K_1 \in \mathbb{N}$ large enough, making also this term arbitrarily small for $N$ and any $k \geq K_1$. Consequently, for any $\eps>0$, there exist $N_\eps$ and $K_1$ such that \eqref{1:cont} and \eqref{2:cont} are bounded by $\eps/3$ for all $k \ge K_1$.}

With $N_\eps$ fixed as above, we evaluate the middle pair using Lemma \ref{lem:generic_difference} with $\mu^N$ as the smooth reference, to obtain
\begin{equation*}
\sup_{t \in [0, T_{M_1}]} \norm{\mu_k^N - \mu^N}_{H^s} \lesssim \left( \norm{J_N (u_{0,k} - u_0)}_{H^s} + N^{\frac{1}{\theta}} \norm{J_N (u_{0,k} - u_0)}_{L^2} \right).
\end{equation*}
{Since $u_{0,k} \to u_0$ in $H^s$, there exists an index $K_2 \in \mathbb{N}$ such that for all $k \ge K_2$, this middle term is less than $\eps/3$. Summing the three contributions in \eqref{eq:triangle_split_0} for $N=N_\eps$ and $k> K_{0,\eps}:= \max \{K_1,K_2,K_0\}$, we conclude}
\begin{equation} \label{eq:eps_K1_bound}
\sup_{t \in [0, T_{M_1}]} \norm{u_k(t) - u(t)}_{H^s} <\eps,
\end{equation}
and then $u_k \to u$ in $C([0,T_{M_1}],H^s)$. 

\medskip

For any $\varepsilon \in (0, 1)$, by the definition of $M_1$ and the bound \eqref{eq:eps_K1_bound} evaluated exactly at $t = T_{M_1}$, for all $k > K_{0,\eps}$ 
\begin{equation}\label{T1}
\norm{u_k(T_{M_1})}_{H^s} \le \norm{u(T_{M_1})}_{H^s} + \norm{u_k(T_{M_1}) - u(T_{M_1})}_{H^s} \le (M_1 - 1) +\eps \le M_1.
\end{equation}
Since $\norm{u_k(T_{M_1})}_{H^s} \le M_1$, the local existence theory (Lemma \ref{lem:local_existence_true_PDE}) guarantees that $u_k$ can be extended for at least another time step $T_{M_1}$, so that it exists at least up to $2T_{M_1}$. 
We can systematically iterate this argument over the $\tau$ intervals $[t_i, t_{i+1}]$. Assuming inductively that $u_k \to u$ in $C([t_{i-1}, t_i],H^s)$, we naturally find again $\norm{u_k(t_i)}_{H^s} \le M_1$, as we did in \eqref{T1}. 
Repeating the Bona-Smith approximation argument leading to \eqref{eq:eps_K1_bound}, now with initial data $u(t_i)$ and $u_k(t_i)$, we obtain,
 for any $\eps>0$, an index $K_{i+1,\eps} \ge K_{i, \eps}$ such that the error on $[t_i, t_{i+1}]$ is bounded by $\eps$.
{After exactly $\tau$ iterations, defining $K = K_{\tau, \eps}$, the solutions $u_k$ with $k> K_{\tau, \eps}$ are well defined on $[0, T]$  (then $T_k \geq T$)} and taking the maximum over the finitely many intervals the error satisfies
\begin{equation*}
\sup_{t \in [0, T]} \norm{u_k(t) - u(t)}_{H^s} < \eps\,.
\end{equation*}
Since $T \in (0, T_\infty)$ and $\varepsilon > 0$ were arbitrary, this completes the proof of the continuous dependence on the initial data.
\end{proof}

\section{Conditional global existence in $H^2$}\label{MDE}
\subsection{Modified energy}
We recall the definition of the \emph{good functionals} introduced in \cite{hayashi2025global}.

\begin{definition} \label{def:G_set}
We denote by $\mathcal{G}$ the set of all functionals $G \in C(H^{2}, \R)$ such that for all $M>0$ there exists a constant $C(M)>0$ satisfying
\begin{equation}\label{esti:G}
u \in H^{2}, \quad \norm{u}_{H^{1}} \le M \implies \abs{G(u)} \le C(M) (1 + \norm{u}_{H^{2}}^{2}).
\end{equation}
\end{definition}

This functional class is introduced to rigorously identify admissible lower-order perturbations of the standard $H^2$ energy. If we can construct a modified energy equivalent to the $H^2$ norm up to terms in $\mathcal{G}$, we can safely apply Gronwall's lemma to prevent finite-time blow-up, assuming an \textit{a priori} bound on the $H^1$ norm. The core idea is to construct suitable energy corrections whose time derivatives cancel the higher-order contributions, leaving only terms in $\mathcal G$. We restrict our construction to $s=2$ since the explicit algebraic cancellations of the modified energy are highly sensitive to the integer number of derivatives.

\smallskip

Let $N>1$ and $u_{N} \in C(\R, H^{\infty})$ be the approximate solution obtained in Lemma \ref{lem:global_approx}. By the properties of the spectral projector, we know that $u_N = J_N u_{N}$. 
\begin{theorem} \label{thm:energy}
Let $N>1$ and $u_{N} \in C(\R, H^{\infty})$ be the solution of \eqref{eq:approx_problem}. Then there exists a functional $G_{rem,N} \in \mathcal{G}$, with the estimate \eqref{esti:G} uniform in $N$, such that the modified energy defined by
\begin{equation} \label{eq:modified_energy_def2}
\mathcal{E}_N(u_N) = \norm{\partial_{x}^{2} u_N}_{L^{2}}^{2} - \frac{4d+2e}{3c} \int \abs{u_N}^2 \abs{\partial_x u_N}^2 dx - \frac{d+3e}{3c} \int \Re\left(u_N^2 (\partial_x \overline{u}_N)^2\right) dx
\end{equation}
satisfies the differential identity
\begin{equation} \label{eq:modified_energy_evol}
\frac{d}{dt} \mathcal{E}_N(u_N) = G_{rem,N}(u_N).
\end{equation}
\end{theorem}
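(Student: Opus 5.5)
We differentiate $\mathcal E_N$ in time along \eqref{eq:approx_problem} and sort every term by its number of derivatives on the highest-order factor. Since $u_N=J_Nu_N$ and $J_N$ is self-adjoint, we have $\langle \partial_x^2 J_N g(u_N),\partial_x^2 u_N\rangle=\langle \partial_x^2 g(u_N),\partial_x^2 u_N\rangle$. Thus the cut-off disappears from the leading energy, and the linear part $A$ contributes nothing by skew-adjointness. This gives
\[
\frac{d}{dt}\|\partial_x^2u_N\|_{L^2}^2=2\Re\langle \partial_x^2 g(u_N),\partial_x^2u_N\rangle .
\]

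\emph{Leading energy.} Expand $\partial_x^2$ of $ib|u|^2u-d|u|^2u_x-eu^2\bar u_x$ by Leibniz.

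The cubic term without derivatives, and all terms with at most two derivatives on any factor, are bounded by $\|u\|_{L^\infty}^a\|u_x\|_{L^\infty}^b\|u\|_{H^2}^2$ with $b\le 2$ and only one $L^\infty$ factor of $u_x$ truly needed. We use $\|u_x\|_{L^\infty}\lesssim\|u\|_{H^1}^{1/2}\|u\|_{H^2}^{1/2}$ (Gagliardo–Nirenberg) together with $\|u_{xx}\|_{L^4}$-type interpolation. Such terms are in $\mathcal G$ if they are at most quadratic in $\|u\|_{H^2}$; the delicate ones are those with $b=2$, which must be checked to come with enough $u$ factors in $L^\infty\lesssim H^1$.

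The dangerous terms are:
\begin{itemize}
\item $|u|^2\partial_x^3u$ paired with $\partial_x^2\bar u$. Integrating by parts, this reduces to $-\frac12\int\partial_x|u|^2|u_{xx}|^2$, which is cubic in $H^2$-level quantities ($u_x$ in $L^\infty$ times $\|u_{xx}\|^2$) and not in $\mathcal G$.
\item $2\partial_x(|u|^2)u_{xx}$ and $\partial_x^2(|u|^2)u_x$.
\item $u^2\partial_x^3\bar u$, whose pairing $\Re\int u^2\partial_x^3\bar u\,\partial_x^2\bar u=\frac12\Re\int u^2\partial_x(\bar u_{xx})^2$ gives $-\Re\int uu_x(\bar u_{xx})^2$.
\end{itemize}
Collecting, the non-$\mathcal G$ part of $\frac{d}{dt}\|u_{xx}\|^2$ is a combination
\[
\alpha\int \Re(\bar u u_x)|u_{xx}|^2+\beta\,\Re\int u u_x\bar u_{xx}^2+\gamma\,\Re\int \bar u_x u_x\,\ldots,
\]
i.e.\ trilinear-in-derivatives terms of type $u\,u_x\,u_{xx}u_{xx}$, with $\alpha,\beta$ linear in $d,e$.

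\emph{Correction terms.} Differentiate the two quartic correction terms in \eqref{eq:modified_energy_def2}. Here the linear flow $\partial_tu=-cu_{xxx}+\dots$ is what matters: substituting $-c\partial_x^3u$ into $\int|u|^2|u_x|^2$ and $\Re\int u^2\bar u_x^2$ and integrating by parts yields exactly terms of the forms $c\int\Re(\bar uu_x)|u_{xx}|^2$ and $c\Re\int uu_x\bar u_{xx}^2$, plus $\mathcal G$ remainders. The $ia u_{xx}$ part and the nonlinear part give terms with at most two derivatives on two factors, or lower order, and these are shown to be in $\mathcal G$ using $\|u\|_{H^1}\le M$.

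In these corrections the cut-off $J_N$ appears in $\partial_tu_N$ via $J_Ng(u_N)$. We handle it by writing $\langle J_Ng,\cdot\rangle$ and bounding $\|g(u_N)\|_{L^2}\lesssim C(M)\|u_N\|_{H^2}$ directly, paired with $|u||u_x|$-type factors in $L^2\cdot L^\infty$. This keeps the bounds uniform in $N$.

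\emph{Matching coefficients.} The coefficients $\frac{4d+2e}{3c}$ and $\frac{d+3e}{3c}$ are then determined by solving the $2\times2$ linear system cancelling $\alpha$ and $\beta$. We verify that this is consistent with the stated values, possibly also using the identity $\int\partial_x(\ldots)=0$ to relate the two trilinear forms $\int |u_x|^2\Re(\bar u u_{xx})$-type terms.

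The main obstacle is this bookkeeping: making sure every surviving term with $\partial_x^3$ or with two $u_{xx}$ factors and a $u_x$ is either cancelled or reducible by integration by parts, and that the residual terms (e.g.\ $|u_x|^2|u_{xx}|\,|u|$-type) satisfy the $\mathcal G$ bound $C(M)(1+\|u\|_{H^2}^2)$. For the residual terms we use
\[
\|u_x\|_{L^4}^4\lesssim\|u_x\|_{L^2}^3\|u_{xx}\|_{L^2},\qquad \|u_x\|_{L^\infty}^2\lesssim\|u\|_{H^1}\|u\|_{H^2}.
\]
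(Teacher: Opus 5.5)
Your plan is the paper's plan. You cancel the non-$\mathcal G$ part of $\frac{d}{dt}\|\partial_x^2u_N\|_{L^2}^2$ with the $-c\partial_x^3$-evolution of the two quartic correctors, and you put the $ia\partial_x^2$ and $J_Ng$ contributions into $\mathcal G$; your handling of those terms, and of the cut-off via self-adjointness of $J_N$, is fine. But the proposal stops exactly where the proof of \emph{this} statement lies. The theorem asserts an exact cancellation for the specific constants $\frac{4d+2e}{3c}$ and $\frac{d+3e}{3c}$. You never compute $\alpha$ and $\beta$. You never compute which combination of the dangerous forms each corrector produces. You also leave an unidentified third form ``$\gamma\,\Re\int\bar u_xu_x\ldots$''. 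The sentence ``we verify that this is consistent with the stated values'' asserts the result rather than verifying it. Solvability of your $2\times2$ system is not automatic either: the system could be singular, and a genuine third dangerous form could not be removed by two correctors at all.

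Concretely, here is what has to be shown. Set $T_1=\int|\partial_x^2u|^2\Re(u\,\partial_x\bar u)$ and $T_2=\Re\int u\,\partial_xu\,(\partial_x^2\bar u)^2$. After moving the third and fourth derivatives by parts, one gets $I_d=-4dT_1-dT_2$ and $I_e=-2eT_1-3eT_2$. Hence, modulo $\mathcal G$, $\frac12\frac{d}{dt}\|\partial_x^2u\|^2=-(4d+2e)T_1-(d+3e)T_2$. Two tools do this:
\begin{itemize}
\item the identity $\Re z_1\Re z_2=\frac12\Re(z_1z_2)+\frac12\Re(z_1\bar z_2)$;
\item the exact derivative $|\partial_xu|^2\Re(\partial_x\bar u\,\partial_x^2u)=\frac14\partial_x|\partial_xu|^4$, which disposes of your $\gamma$-term. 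Integrands with one factor $\partial_x^2u$ and three factors $\partial_xu$ lie in $\mathcal G$ anyway, via $\|u_x\|_{L^6}^3\|u_{xx}\|_{L^2}\lesssim\|u\|_{H^1}^2\|u\|_{H^2}^2$.
\end{itemize}
Along $-c\partial_x^3u$ the correctors act \emph{diagonally}:
\begin{itemize}
\item the $-c\partial_x^3u$ part of $\frac{d}{dt}\mathcal C_1$ is exactly $-6cT_1$, because the $+2cT_2$ and $-2cT_2$ contributions cancel;
\item the $-c\partial_x^3u$ part of $\frac{d}{dt}\mathcal C_2$ is exactly $-6cT_2$, because the $+4cT_1$ and $-4cT_1$ contributions cancel.
\end{itemize}
Only then, using $c\neq0$ and the factor $2$ between $\frac{d}{dt}\|\partial_x^2u\|^2$ and $\frac12\frac{d}{dt}\|\partial_x^2u\|^2$, do the coefficients $\frac{2(4d+2e)}{6c}$ and $\frac{2(d+3e)}{6c}$ come out. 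These are the stated ones.

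Separately, your sorting criterion via $\|u\|_{L^\infty}^a\|u_x\|_{L^\infty}^b\|u\|_{H^2}^2$ is muddled, since any $b\ge1$ already makes that bound superquadratic. The clean statement: once each factor carries at most two derivatives, the only non-$\mathcal G$ integrands are those with two factors $\partial_x^2u$ and one factor $\partial_xu$. These are controlled only by $\|u\|_{H^1}^{3/2}\|u\|_{H^2}^{5/2}$.
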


We introduce the notation
\begin{equation}\label{C1C2}
\mathcal{C}_1(u_N):=\int \abs{u_N}^2 \abs{\partial_x u_N}^2 dx, \qquad \mathcal{C}_2(u_N):= \int \Re\left(u_N^2 (\partial_x \overline{u}_N)^2\right) dx\,
\end{equation}
so that
\begin{equation}\label{eq:modified_energy_def}
\mathcal{E}_N(u_N)=\norm{\partial_{x}^{2} u_N}_{L^{2}}^{2} - \frac{4d+2e}{3c} \mathcal{C}_1(u_N)- \frac{d+3e}{3c} \mathcal{C}_2(u_N)\,.
\end{equation}

In order to systematically prove Theorem \ref{thm:energy}, we split the analysis into three intermediate lemmas. We will compute the time derivative of each component of the modified energy separately, isolating the problematic higher-order terms. 

\begin{lemma} \label{lem:energy_part1}
Let $N>1$ and $u_{N} \in C(\R, H^{\infty})$ be the solution of \eqref{eq:approx_problem}. Then there exists a remainder $G_{1} \in \mathcal{G}$, with the estimate \eqref{esti:G} uniform in $N$, such that
\begin{equation} \label{eq:evol_part1}
\frac{1}{2} \frac{d}{dt} \norm{\partial_{x}^{2} u_{N}}_{L^{2}}^{2} =- (4d+2e)T_{1} - (d+3e)T_{2} + G_{1}(u_N),
\end{equation}
where the leading-order terms related to the derivative loss are defined as
\begin{align}
T_{1} &:= \int \abs{\partial_x^2 u_N}^2 \Re(u_N \partial_x \overline{u}_N) dx, \label{eq:T1_def} \\
T_{2} &:= \int \Re\left( u_N \partial_x u_N (\partial_x^2 \overline{u}_N)^2 \right) dx. \label{eq:T2_def}
\end{align}
\end{lemma}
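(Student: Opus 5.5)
We differentiate $\frac12\|\partial_x^2u_N\|_{L^2}^2$ along \eqref{eq:approx_problem} and track which contributions carry derivative loss. Writing $u=u_N$ and using $J_Nu=u$ and the self-adjointness of $J_N$, we get
\[
\frac12\frac{d}{dt}\|\partial_x^2u\|_{L^2}^2=\Re\langle \partial_x^2 Au,\partial_x^2u\rangle+\Re\langle \partial_x^2 g(u),\partial_x^2 u\rangle .
\]
The linear part vanishes because $A$ is skew-adjoint and commutes with $\partial_x^2$.

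\textbf{The cubic term $ib|u|^2u$.} Expand $\partial_x^2(|u|^2u)$ by Leibniz. The top term $i|u|^2\partial_x^2u\cdot\partial_x^2\overline u$ is purely imaginary, so it drops out. The next term is $iu^2\partial_x^2\overline u\cdot\partial_x^2\overline u$. Every remaining term has at most one factor $\partial_x^2u$ paired with $\partial_x^2\overline u$, and the other factors are $u$ or $\partial_xu$. We bound these using $\|u\|_{L^\infty}\lesssim\|u\|_{H^1}$ and $\|\partial_xu\|_{L^4}^2\lesssim\|u\|_{H^1}^{3/2}\|u\|_{H^2}^{1/2}$ (Gagliardo--Nirenberg). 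This gives at most $C(M)(1+\|u\|_{H^2}^2)$, so they belong to $\mathcal G$, and $\int u^2(\partial_x^2\overline u)^2$ is bounded the same way.

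\textbf{The derivative terms.} Expand
\[
\partial_x^2(|u|^2\partial_xu)=|u|^2\partial_x^3u+2\partial_x(|u|^2)\partial_x^2u+\partial_x^2(|u|^2)\partial_xu ,
\]
and similarly
\[
\partial_x^2(u^2\partial_x\overline u)=u^2\partial_x^3\overline u+2\partial_x(u^2)\partial_x^2\overline u+\partial_x^2(u^2)\partial_x\overline u .
\]
\begin{itemize}
\item \emph{Third-derivative terms.} Integrate by parts. For $d$: $\Re\int|u|^2\partial_x^3u\,\partial_x^2\overline u=-\frac12\int\partial_x(|u|^2)|\partial_x^2u|^2$. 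For $e$: $\Re\int u^2\partial_x^3\overline u\,\partial_x^2\overline u=-\frac12\Re\int\partial_x(u^2)(\partial_x^2\overline u)^2$.
\item \emph{$\partial_x^2u$ paired with $\partial_x^2\overline u$ and one $\partial_x$ on a coefficient.} These give quantities of the form $\int|\partial_x^2u|^2\Re(u\partial_x\overline u)$ (using $\partial_x|u|^2=2\Re(u\partial_x\overline u)$) and $\Re\int u\partial_xu(\partial_x^2\overline u)^2$, up to conjugation inside $\Re$.
\item \emph{Terms with $\partial_x^2(|u|^2)$ or $\partial_x^2(u^2)$.} These contain pieces $u\partial_x^2\overline u\,\partial_xu\,\partial_x^2\overline u$ and $\overline u\partial_x^2u\,\partial_xu\,\partial_x^2\overline u$, which are again of type $T_1$ or $T_2$. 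The remaining pieces $|\partial_xu|^2\partial_xu\,\partial_x^2\overline u$ are cubic in $\partial_xu$ and are controlled by $\|\partial_xu\|_{L^6}^3\|u\|_{H^2}\lesssim C(M)\|u\|_{H^2}^2$, hence lie in $\mathcal G$.
\end{itemize}

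\textbf{Collecting terms.} Summing everything, including the conjugate pairings, should give exactly $-(4d+2e)T_1-(d+3e)T_2$. The main obstacle is this coefficient bookkeeping. For example, the $d$-part produces $-\frac{d}{2}\cdot 2T_1$ from the integration by parts, $-4dT_1$ from $2\partial_x(|u|^2)\partial_x^2u$, and further $T_1$ and $T_2$ contributions from $\partial_x^2(|u|^2)\partial_xu$. I would carry this out carefully term by term, checking each coefficient against the energy correction in \eqref{eq:modified_energy_def2}; the corrections are designed so that $\frac{d}{dt}\mathcal C_1$ and $\frac{d}{dt}\mathcal C_2$ cancel $T_1$ and $T_2$ through the $c\partial_x^3$ flow.

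Finally, every bound above depends only on $\|u\|_{H^1}$ and $\|u\|_{H^2}$ and never on $N$, so $G_1$ satisfies \eqref{esti:G} uniformly in $N$. Continuity of $G_1$ on $H^2$ holds because $G_1$ is a finite sum of multilinear integrals.
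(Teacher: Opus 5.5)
Your overall strategy is sound, and it is slightly more economical than the paper's. The paper first reduces to $d\,\Re\int\partial_x(|u_N|^2\partial_xu_N)\,\partial_x^3\overline u_N$, integrates by parts a second time, and uses $\Re z_1\Re z_2=\frac12\Re(z_1z_2)+\frac12\Re(z_1\overline z_2)$. You instead expand $\partial_x^2 g$ by Leibniz and integrate by parts only the $\partial_x^3$ terms. Your remainder estimates (the $b$-terms, the pieces cubic in $\partial_xu$, uniformity in $N$) are fine.

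The gap is that the lemma's whole content is the pair of coefficients $-(4d+2e)$ and $-(d+3e)$, and you never compute them. You say the sum ``should give exactly'' the claimed expression and propose checking it against \eqref{eq:modified_energy_def2}. That check is circular: the correction coefficients in $\mathcal E_N$ are chosen from this lemma, together with the $-6cT_1$, $-6cT_2$ of the next two lemmas.

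Moreover, the one sample tally you give has the wrong sign. Since $g$ contains $-d|u|^2\partial_xu$, the integration-by-parts piece contributes $-d\cdot(-T_1)=+dT_1$, not $-dT_1$. Propagating your sign would give $-6dT_1-dT_2$ instead of $-4dT_1-dT_2$.

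Done correctly, your decomposition closes. Write $u=u_N$ and note that $2\Re(\partial_xu\,\partial_x^2\overline u)|\partial_xu|^2=\frac12\partial_x|\partial_xu|^4$, so the pieces cubic in $\partial_xu$ vanish identically. Then
\[
\Re\int\partial_x^2(|u|^2\partial_xu)\,\partial_x^2\overline u\,dx=-T_1+4T_1+(T_1+T_2)=4T_1+T_2,
\qquad
\Re\int\partial_x^2(u^2\partial_x\overline u)\,\partial_x^2\overline u\,dx=-T_2+4T_2+2T_1=2T_1+3T_2 .
\]
In each case the three contributions come, in order, from the $\partial_x^3$ term after integration by parts, from the $2\partial_x(\cdot)\,\partial_x^2$ term, and from the $\partial_x^2(\cdot)\,\partial_x$ term. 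Multiplying by $-d$ and $-e$ yields $-(4d+2e)T_1-(d+3e)T_2$.

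Writing out these few lines is what the proof actually requires. Without them you have shown only that $\frac12\frac{d}{dt}\|\partial_x^2u_N\|_{L^2}^2=\alpha T_1+\beta T_2+G_1$ for unspecified $\alpha,\beta$. That is not enough for the exact cancellation used in Theorem \ref{thm:energy}.
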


\begin{proof}
We compute the temporal evolution of the $L^2$ norm of the second spatial derivative of $u_N$
\begin{align}
\frac{1}{2} \frac{d}{dt} \norm{\partial_{x}^{2} u_{N}}_{L^{2}}^{2} &= \Re \inner{\partial_t u_N}{\partial_x^4 u_N}_{L^2} \nonumber \\
&= \Re \inner{A u_N}{\partial_x^4 u_N}_{L^2} + \Re \inner{J_N g(J_N u_N)}{\partial_x^4 u_N}_{L^2}. \label{eq:H2_evol_split}
\end{align}
For the linear component, repeated integration by parts yields
\begin{equation*}
\Re \inner{A u_N}{\partial_x^4 u_N}_{L^2} = \Re \inner{A \partial_x^2 u_N}{\partial_x^2 u_N}_{L^2} = 0,
\end{equation*}
due to the fact that $A = i a\partial_x^2 - c\partial_x^3$ commutes with $\partial_x$ and it is skew-adjoint.
For the nonlinear part in \eqref{eq:H2_evol_split}, using the self-adjointness of $J_N$ and recalling $u_N = J_N u_N$, we define
\begin{equation} \label{eq:I_term}
I := \Re \inner{J_N g(J_N u_N)}{\partial_x^4 u_N}_{L^2} = \Re \inner{g(u_N)}{\partial_x^4 u_N}_{L^2}.
\end{equation}
We now analyze the contribution of each term inside the nonlinearity $g(u_N)$ as defined in \eqref{eq:operators_def}. Starting with the purely cubic term 
\begin{equation} \label{eq:Ib_term}
I_b := \Re \inner{ib \abs{u_N}^2 u_N}{\partial_x^4 u_N}_{L^2} = \Re \inner{ib \partial_x^2 (\abs{u_N}^2 u_N)}{\partial_x^2 u_N}_{L^2}.
\end{equation}
By direct computation, we expand the second derivative
\begin{equation*}
\partial_x^2 (\abs{u_N}^2 u_N) = 2 \abs{u_N}^2 \partial_x^2 u_N + u_N^2 \partial_x^2 \overline{u}_N + 4 u_N \abs{\partial_x u_N}^2 + 2 \overline{u}_N (\partial_x u_N)^2.
\end{equation*}
Consequently, $I_b$ breaks down into four pieces. The first one identically vanishes since the integrand is purely imaginary
\begin{equation*}
\Re \inner{2ib \abs{u_N}^2 \partial_x^2 u_N}{\partial_x^2 u_N}_{L^2} = 2b \Re \int i \abs{u_N}^2 \abs{\partial_x^2 u_N}^2 dx = 0.
\end{equation*}
For the second piece, we apply H\"older's inequality and the Sobolev embedding $H^1 \hookrightarrow L^\infty$
\begin{align*}
\abs{\Re \inner{ib u_N^2 \partial_x^2 \overline{u}_N}{\partial_x^2 u_N}_{L^2}} &= \abs{b \Im \int u_N^2 (\partial_x^2 \overline{u}_N)^2 dx} \\
&\le \abs{b} \norm{u_N}_{L^\infty}^2 \norm{\partial_x^2 u_N}_{L^2}^2 \le C \norm{u_N}_{H^1}^2 \norm{\partial_x^2 u_N}_{L^2}^2.
\end{align*}
The third piece is bounded using the Gagliardo-Nirenberg inequality
\begin{align*}
\abs{\Re \inner{4ib u_N \abs{\partial_x u_N}^2}{\partial_x^2 u_N}_{L^2}} &\le 4\abs{b} \norm{u_N}_{L^\infty} \norm{\partial_x u_N}_{L^4}^2 \norm{\partial_x^2 u_N}_{L^2}& \\
& \le C \norm{u_N}_{H^1}^{5/2} \norm{\partial_x^2 u_N}_{L^2}^{3/2}.&
\end{align*}
The final piece is estimated symmetrically. Thus, $I_b \in \mathcal{G}$ since it can be globally bounded by
\begin{equation*}
\abs{I_b} \le C(\norm{u_N}_{H^1}) \left(1 + \norm{\partial_x^2 u_N}_{L^2}^2\right).
\end{equation*}

We continue estimating $I$ in \eqref{eq:I_term} by carefully isolating the highest-order terms arising from the second nonlinear contribution. Using integration by parts to shift one derivative, we have
\begin{align}
I_d &:= -d \Re \inner{\abs{u_N}^2 \partial_x u_N}{\partial_x^4 u_N}_{L^2} = d \Re \int \partial_x (\abs{u_N}^2 \partial_x u_N) (\partial_x^3 \overline{u}_N) dx \nonumber \\
&= d \Re \int \left[ \partial_x(\abs{u_N}^2) \partial_x u_N + \abs{u_N}^2 \partial_x^2 u_N \right] (\partial_x^3 \overline{u}_N) dx \nonumber \\
&= 2d \Re \int \Re(\overline{u}_N \partial_x u_N) (\partial_x u_N) (\partial_x^3 \overline{u}_N) dx + d \Re \int \abs{u_N}^2 (\partial_x^2 u_N) (\partial_x^3 \overline{u}_N) dx. \label{eq:Id_expansion}
\end{align}
The second integral in \eqref{eq:Id_expansion} provides exactly 
\begin{equation} \label{eq:Id_part2}
\begin{aligned}
d \Re \int \abs{u_N}^2 (\partial_x^2 u_N) (\partial_x^3 \overline{u}_N) dx &= \frac{d}{2} \int \abs{u_N}^2 \partial_x (\abs{\partial_x^2 u_N}^2) dx & \\
&= -d \int \Re(u_N \partial_x \overline{u}_N) \abs{\partial_x^2 u_N}^2 dx =- d T_1.&
\end{aligned}
\end{equation}
For the first integral in \eqref{eq:Id_expansion}, we integrate by parts once more to shift the derivative from $\partial_x^3 \overline{u}_N$ to the remaining factors
\begin{align}
&2d \Re \int \Re(\overline{u}_N \partial_x u_N) (\partial_x u_N) (\partial_x^3 \overline{u}_N) dx = -2d \Re \int \partial_x \left[ \Re(\overline{u}_N \partial_x u_N) \partial_x u_N \right] (\partial_x^2 \overline{u}_N) dx \nonumber \\
&= -2d \Re \int \left[ \Re(\abs{\partial_x u_N}^2 + \overline{u}_N \partial_x^2 u_N) \partial_x u_N + \Re(\overline{u}_N \partial_x u_N) \partial_x^2 u_N \right] (\partial_x^2 \overline{u}_N) dx \nonumber \\
&= -2d \int \abs{\partial_x u_N}^2 \Re(\partial_x u_N \partial_x^2 \overline{u}_N) dx - 2d \int \Re(\overline{u}_N \partial_x^2 u_N) \Re(\partial_x u_N \partial_x^2 \overline{u}_N) dx \nonumber \\
&\quad - 2d \int \Re(\overline{u}_N \partial_x u_N) \abs{\partial_x^2 u_N}^2 dx. \label{eq:Id_part1_expansion}
\end{align}
Notice that the first term in \eqref{eq:Id_part1_expansion} vanishes since 
\[
\abs{\partial_x u_N}^2 \Re(\partial_x u_N \partial_x^2 \overline{u}_N) = \frac{1}{4} \partial_x(\abs{\partial_x u_N}^4).
\]
The third term in \eqref{eq:Id_part1_expansion} is precisely $-2d T_1$.
For the middle term, we apply the complex identity $\Re(z_1)\Re(z_2) = \frac{1}{2}\Re(z_1 z_2) + \frac{1}{2}\Re(z_1 \overline{z}_2)$ with $z_1 = \overline{u}_N \partial_x^2 u_N$ and $z_2 = \partial_x u_N \partial_x^2 \overline{u}_N$:
\begin{align*}
-2d \int \Re(\overline{u}_N \partial_x^2 u_N) \Re(\partial_x u_N \partial_x^2 \overline{u}_N) dx &= -d \int \Re\left( u_N \partial_x u_N (\partial_x^2 \overline{u}_N)^2 \right) dx&  \\
& - d \int \Re(\overline{u}_N \partial_x u_N) \abs{\partial_x^2 u_N}^2 dx = -d T_2 -d T_1.&
\end{align*}
Summing all the components of \eqref{eq:Id_part1_expansion} with \eqref{eq:Id_part2}, we obtain the exact cancellation structure for $I_d$
\begin{equation} \label{eq:Id_final}
I_d = (-2d T_1 - d T_2 - d T_1) - d T_1 =- 4d T_1 - d T_2.
\end{equation}

Finally, we consider the third nonlinear contribution $I_e$ involving the conjugated derivative. Integrating by parts yields
\begin{align}
I_e &:= -e \Re \inner{u_N^2 \partial_x \overline{u}_N}{\partial_x^4 u_N}_{L^2} = e \Re \int \partial_x(u_N^2 \partial_x \overline{u}_N) (\partial_x^3 \overline{u}_N) dx \nonumber \\
&= 2e \Re \int u_N (\partial_x u_N) (\partial_x \overline{u}_N) (\partial_x^3 \overline{u}_N) dx + e \Re \int u_N^2 (\partial_x^2 \overline{u}_N) (\partial_x^3 \overline{u}_N) dx. \label{eq:Ie_expansion}
\end{align}
The second integral in \eqref{eq:Ie_expansion} directly generates a $T_2$ term
\begin{equation} \label{eq:Ie_part2}
e \int \Re\left(u_N^2 \frac{1}{2} \partial_x((\partial_x^2 \overline{u}_N)^2)\right) dx = -\frac{e}{2} \int \Re\left( 2 u_N \partial_x u_N (\partial_x^2 \overline{u}_N)^2 \right) dx = -e T_2.
\end{equation}
For the first integral in \eqref{eq:Ie_expansion}, we integrate by parts shifting the derivative to the left
\begin{align}
&2e \Re \int u_N \abs{\partial_x u_N}^2 (\partial_x^3 \overline{u}_N) dx = -2e \Re \int \partial_x\left( u_N \abs{\partial_x u_N}^2 \right) (\partial_x^2 \overline{u}_N) dx \nonumber \\
&= -2e \Re \int \left[ (\partial_x u_N) \abs{\partial_x u_N}^2 + u_N \partial_x(\abs{\partial_x u_N}^2) \right] (\partial_x^2 \overline{u}_N) dx \nonumber \\
&= -2e \int \abs{\partial_x u_N}^2 \Re(\partial_x u_N \partial_x^2 \overline{u}_N) dx - 4e \int \Re\left( u_N (\partial_x^2 \overline{u}_N) \Re(\partial_x u_N \partial_x^2 \overline{u}_N) \right) dx. \label{eq:Ie_part1_expansion}
\end{align}
Again, the first term vanishes as a total derivative. For the second term, applying the same complex identity trick as before on $\Re(u_N \partial_x^2 \overline{u}_N) \Re(\partial_x u_N \partial_x^2 \overline{u}_N)$, we find
\begin{align*}
-4e \int \Re\left( u_N \partial_x^2 \overline{u}_N \right) \Re\left( \partial_x u_N \partial_x^2 \overline{u}_N \right) dx &= -2e \int \Re(u_N \partial_x u_N (\partial_x^2 \overline{u}_N)^2) dx & \\
&- 2e \int \Re(u_N \partial_x \overline{u}_N) \abs{\partial_x^2 u_N}^2 dx= -2e T_2 - 2e T_1.&
\end{align*}
Combining this with \eqref{eq:Ie_part2}, we obtain the structure for $I_e$
\begin{equation} \label{eq:Ie_final}
I_e = (-2e T_2 - 2e T_1) - e T_2 = -2e T_1 - 3e T_2.
\end{equation}
Summing the contributions \eqref{eq:Id_final} and \eqref{eq:Ie_final}, and absorbing $I_b$ into the $\mathcal{G}$ remainder class, we rigorously establish \eqref{eq:evol_part1}.
\end{proof}

\begin{lemma} \label{lem:energy_part2}
Recall \eqref{C1C2}.
Let $N>1$ and $u_N \in C(\R, H^\infty)$ be the solution of \eqref{eq:approx_problem}. Then there exists a remainder $G_2 \in \mathcal{G}$, with the estimate \eqref{esti:G} uniform in $N$, such that
\begin{equation} \label{eq:evol_part2}
\frac{d}{dt} \mathcal{C}_1(u_N) =- 6c T_1 + G_2(u_N)
\end{equation}
with $T_1$ defined as in \eqref{eq:T1_def}.
\end{lemma}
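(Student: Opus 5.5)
We differentiate $\mathcal{C}_1(u_N)=\int |u_N|^2|\partial_x u_N|^2\,dx$ in time and substitute $\partial_t u_N = Au_N + J_N g(u_N)$, writing
\[
\frac{d}{dt}\mathcal{C}_1 = 2\int \Re(\overline{u}_N\partial_t u_N)\,|\partial_x u_N|^2\,dx + 2\int |u_N|^2\,\Re(\partial_x\overline{u}_N\,\partial_x\partial_t u_N)\,dx .
\]
We treat the linear part $A=ia\partial_x^2-c\partial_x^3$ and the nonlinear part separately.

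\textbf{Nonlinear part.} Here $J_N g(u_N)$ is of order one, so $\partial_x J_N g(u_N)$ has at most two derivatives. We first move $J_N$ onto the smooth factor using self-adjointness, which is harmless since $\|J_N f\|_{H^k}\le\|f\|_{H^k}$. The resulting terms are sextic, with at most one factor $\partial_x^2 u_N$, or quadratic in $\partial_x^2 u_N$ multiplied by $L^\infty$ factors. Using $H^1\hookrightarrow L^\infty$ and Gagliardo--Nirenberg, $\|\partial_x u_N\|_{L^\infty}\lesssim \|u_N\|_{H^1}^{1/2}\|u_N\|_{H^2}^{1/2}$, each term is bounded by $C(\|u_N\|_{H^1})(1+\|u_N\|_{H^2}^2)$. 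These terms are therefore in $\mathcal G$ uniformly in $N$.

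\textbf{Schrödinger part $ia\partial_x^2$.} This produces terms with at most three derivatives, for instance $\int |u_N|^2\Re(\partial_x\overline{u}_N\, i a\partial_x^3u_N)$. One integration by parts moves a derivative off $\partial_x^3u_N$. This leaves either $\Im\int|u_N|^2|\partial_x^2u_N|^2$-type terms, which vanish or are real multiples of purely imaginary integrands, or quartic terms with two factors of $\partial_x^2 u_N$ and two $L^\infty$-controlled factors. Terms like $\int \partial_x(|u_N|^2)\partial_x\overline{u}_N\partial_x^2 u_N$ are bounded by $\|u_N\|_{L^\infty}\|\partial_xu_N\|_{L^\infty}\|\partial_x u_N\|_{L^2}\|\partial_x^2u_N\|_{L^2}\lesssim C(\|u_N\|_{H^1})\|u_N\|_{H^2}^{3/2}$. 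So this part is in $\mathcal G$ as well.

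\textbf{Airy part $-c\partial_x^3$.} This is the main step. The contribution is
\[
-2c\int \Re(\overline{u}_N\partial_x^3u_N)|\partial_xu_N|^2\,dx - 2c\int|u_N|^2\Re(\partial_x\overline{u}_N\,\partial_x^4u_N)\,dx .
\]
In the second integral we integrate by parts twice, or expand $\partial_x^2(|u_N|^2\partial_x\overline{u}_N)$ after moving two derivatives. The top-order piece is $-2c\int|u_N|^2\Re(\partial_x^3\overline{u}_N\partial_x^2u_N)=c\int\partial_x(|u_N|^2)|\partial_x^2u_N|^2 = 2cT_1$. The remaining pieces again contain $\partial_x^3$ and are integrated by parts once more. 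Each time we keep track of coefficients of $T_1$, and any $T_2$-type combinations should cancel, which is consistent with the stated identity. The first integral is handled similarly: one integration by parts gives $\int \Re(\partial_x\overline{u}_N\partial_x^2u_N)|\partial_xu_N|^2$ (a total derivative, hence zero), plus $\int\Re(\overline{u}_N\partial_x^2u_N)\,\partial_x|\partial_xu_N|^2$. The latter contributes to $T_1$ through the identity $\Re z_1\Re z_2=\tfrac12\Re(z_1z_2)+\tfrac12\Re(z_1\overline z_2)$ used in Lemma \ref{lem:energy_part1}.

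The expected outcome is that all $T_2$ contributions cancel and the $T_1$ coefficients sum to $-6c$. Everything else, namely terms with at most two factors $\partial_x^2u_N$ and at least one factor with $\le1$ derivative in $L^\infty$ or a total derivative, lies in $\mathcal G$. The main obstacle is this bookkeeping: getting the $T_1$ coefficient exactly right and verifying that the $T_2$ terms cancel rather than merely being estimated. We would organize it by writing every top-order term in the two canonical forms $T_1,T_2$ after repeatedly applying the identity above and discarding total derivatives.
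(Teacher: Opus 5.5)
\textbf{Verdict.} Your route is the paper's: you split $\frac{d}{dt}\mathcal C_1$ into the Airy, Schr\"odinger and nonlinear contributions, and your crude $\mathcal G$-bounds for the last two are fine. The gap is that the one step carrying the content of the lemma is not done.

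\textbf{The gap.} You say the $T_2$ terms ``should cancel, which is consistent with the stated identity,'' and that the $T_1$ coefficients are ``expected'' to sum to $-6c$. That assumes the conclusion. The cancellation cannot be replaced by an estimate. $T_1$ and $T_2$ are only bounded by
\[
\norm{u_N}_{L^\infty}\norm{\partial_x u_N}_{L^\infty}\norm{\partial_x^2u_N}_{L^2}^2\lesssim C(\norm{u_N}_{H^1})\norm{u_N}_{H^2}^{5/2},
\]
and this is sharp. For instance, $u=1+\lambda^{-1/2}\psi(\lambda x)$ with a suitable asymmetric real bump $\psi$ has bounded $H^1$ norm, $\norm{u}_{H^2}\sim\lambda$ and $T_1\sim\lambda^{5/2}$. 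So $T_1,T_2\notin\mathcal G$.

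For the same reason, your description of what lands in $\mathcal G$ (``terms with at most two factors $\partial_x^2u_N$ and at least one factor with $\le 1$ derivative in $L^\infty$'') is not a valid criterion, since it admits $T_1$ and $T_2$ themselves. Two factors $\partial_x^2u_N$ are controlled by $C(\norm{u_N}_{H^1})\norm{u_N}_{H^2}^2$ only when the remaining factors carry no derivative. Also, the first Airy integral does not feed only $T_1$: it produces a $T_2$, and that is exactly what has to be cancelled.

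\textbf{Closing the computation.} For the first integral, one integration by parts (discarding $\frac c2\int\partial_x|\partial_xu_N|^4=0$) and the identity $\Re z_1\Re z_2=\frac12\Re(z_1z_2)+\frac12\Re(z_1\overline{z}_2)$ with $z_1=\overline u_N\partial_x^2u_N$, $z_2=\partial_xu_N\partial_x^2\overline u_N$ give
\[
-2c\Re\int \overline u_N\,\partial_x^3u_N\,|\partial_xu_N|^2\,dx=4c\int\Re(\overline u_N\partial_x^2u_N)\,\Re(\partial_x\overline u_N\partial_x^2u_N)\,dx=2cT_1+2cT_2 .
\]
For the second integral, after your two integrations by parts, use
\[
\partial_x^2(|u_N|^2\partial_x\overline u_N)=|u_N|^2\partial_x^3\overline u_N+2\partial_x(|u_N|^2)\partial_x^2\overline u_N+\partial_x^2(|u_N|^2)\partial_x\overline u_N .
\]
\begin{itemize}
\item The first piece contributes $2cT_1$ (your top-order piece).
\item The second contributes $-4c\int\partial_x(|u_N|^2)|\partial_x^2u_N|^2=-8cT_1$.
\item For the third, expand $\partial_x^2(|u_N|^2)=u_N\partial_x^2\overline u_N+2|\partial_xu_N|^2+\overline u_N\partial_x^2u_N$. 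This gives $-2cT_1$, then $-c\int\partial_x|\partial_xu_N|^4=0$, then $-2cT_2$.
\end{itemize}
Hence the second integral equals $-8cT_1-2cT_2$. The paper reaches the same value by integrating by parts once and then again on the $\partial_x(|u_N|^2)\partial_x\overline u_N\partial_x^3u_N$ piece. Adding the two integrals, the $T_2$ terms cancel and the total is $-6cT_1$.

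With this computation inserted (and the $\mathcal G$ criterion corrected), your argument is the paper's proof.
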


\begin{proof}
We track the evolution of $\mathcal{C}_1(u_N)$. Its gradient action is
\begin{equation} \label{eq:G1_gradient}
\langle \mathcal{C}_1'(u_N), h \rangle = 2 \Re \int \abs{\partial_x u_N}^2 \overline{u}_N h dx + 2 \Re \int \abs{u_N}^2 (\partial_x \overline{u}_N) (\partial_x h) dx.
\end{equation}
We split the contributions using the functional derivative notation
\begin{equation} \label{eq:G1_split}
\frac{d}{dt} \mathcal{C}_1(u_N) = \langle \mathcal{C}_1'(u_N), -c \partial_x^3 u_N \rangle + \langle \mathcal{C}_1'(u_N), i a \partial_x^2 u_N \rangle + \langle \mathcal{C}_1'(u_N), J_N g(u_N) \rangle.
\end{equation}
To extract the highest-order bad terms, we only need to test the gradient against the third-order linear dispersion $-c \partial_x^3 u_N$, since we prove that all other flows will produce terms with at most 4 derivatives in $L^2$, which are bounded by $\norm{u_N}_{H^2}^2$ and thus belong to $\mathcal{G}$. For the first term in \eqref{eq:G1_split} we have 
\begin{equation} \label{eq:G1_dispersion}
\langle \mathcal{C}_1'(u_N), -c \partial_x^3 u_N \rangle = -2c \Re \int \abs{\partial_x u_N}^2 \overline{u}_N (\partial_x^3 u_N) dx - 2c \Re \int \abs{u_N}^2 (\partial_x \overline{u}_N) (\partial_x^4 u_N) dx.
\end{equation}
We integrate the first integral by parts
\begin{align*}
-2c \Re \int \abs{\partial_x u_N}^2 \overline{u}_N (\partial_x^3 u_N) dx &= 2c \Re \int \partial_x (\abs{\partial_x u_N}^2 \overline{u}_N) (\partial_x^2 u_N) dx \\
&= 4c \int \Re(\partial_x^2 u_N \partial_x \overline{u}_N) \Re(\overline{u}_N \partial_x^2 u_N) dx & \\
&+ 2c \int \abs{\partial_x u_N}^2 \Re(\partial_x \overline{u}_N \partial_x^2 u_N) dx.
\end{align*}
The second term is 
\[
c \int \partial_x(\abs{\partial_x u_N}^4)/2 = 0.
\]
For the first piece, applying the complex identity gives
\begin{equation} \label{eq:G1_part1}
4c \int \left[ \frac{1}{2}\Re(u_N \partial_x u_N (\partial_x^2 \overline{u}_N)^2) + \frac{1}{2}\Re(u_N \partial_x \overline{u}_N) \abs{\partial_x^2 u_N}^2 \right] dx = 2c T_2 + 2c T_1.
\end{equation}
For the second integral in \eqref{eq:G1_dispersion}, we apply integration by parts twice
\begin{align*}
&-2c \Re \int \abs{u_N}^2 (\partial_x \overline{u}_N) (\partial_x^4 u_N) dx = 2c \Re \int \partial_x(\abs{u_N}^2 \partial_x \overline{u}_N) (\partial_x^3 u_N) dx \\
&= 4c \int \Re(u_N \partial_x \overline{u}_N) \Re(\partial_x \overline{u}_N \partial_x^3 u_N) dx + 2c \int \abs{u_N}^2 \Re(\partial_x^2 \overline{u}_N \partial_x^3 u_N) dx.
\end{align*}
The second piece evaluates to 
\[
-c \int \partial_x(\abs{u_N}^2) \abs{\partial_x^2 u_N}^2 dx = -2c T_1.
\]
For the first piece, integrating by parts again yields
\begin{align}
&4c \Re \int \Re(u_N \partial_x \overline{u}_N) (\partial_x \overline{u}_N) (\partial_x^3 u_N) dx = -4c \Re \int \partial_x \left[ \Re(u_N \partial_x \overline{u}_N) \partial_x \overline{u}_N \right] (\partial_x^2 u_N) dx \nonumber \\
&= -4c \int \Re(u_N \partial_x^2 \overline{u}_N) \Re(\partial_x \overline{u}_N \partial_x^2 u_N) dx - 4c \int \Re(u_N \partial_x \overline{u}_N) \abs{\partial_x^2 u_N}^2 dx \nonumber \\
&= -4c \int \left[ \frac{1}{2}\Re(u_N \partial_x u_N (\partial_x^2 \overline{u}_N)^2) + \frac{1}{2}\Re(u_N \partial_x \overline{u}_N) \abs{\partial_x^2 u_N}^2 \right] dx - 4c T_1 \nonumber \\
&= -2c T_2 - 2c T_1 - 4c T_1 = -2c T_2 - 6c T_1. \label{eq:G1_part2}
\end{align}
Adding \eqref{eq:G1_part1} to \eqref{eq:G1_part2} and $-2c T_1$, we see the $T_2$ terms cancel
\begin{equation*}
\langle \mathcal{C}_1'(u_N), -c \partial_x^3 u_N \rangle = (2c T_2 + 2c T_1) + (-2c T_2 - 6c T_1) - 2c T_1 =- 6c T_1.
\end{equation*}
Now we need to prove that the other terms of \eqref{eq:G1_split} are in $\mathcal{G}$. We start with the linear term involving the coefficient $a$, which we omit. We estimate
\begin{align}
\langle \mathcal{C}_1'(u_N), i\partial_x^2 u_N \rangle &=- 2 \Im \int \abs{\partial_x u_N}^2 \overline{u}_N \partial_x^2 u_N dx - 2 \Im \int \abs{u_N}^2 (\partial_x \overline{u}_N) (\partial_x^3 u_N) dx \nonumber \\
&= - 2 \Im \int \abs{\partial_x u_N}^2 \overline{u}_N \partial_x^2 u_N dx + 2 \Im \int \partial_x (\abs{u_N}^2 \partial_x \overline{u}_N) (\partial_x^2 u_N) dx  \label{auxaux} \\
&= 2 \Im \int (\partial_x^2 u_N) \left[ \partial_x^2 \bar u_N |u_N|^2 + u_N (\partial_x \bar u_N)^2  \right] dx \nonumber \\
&=2 \Im \int u_N (\partial_x \bar u_N)^2 \partial_x^2 u_N  dx \nonumber,
\end{align}
where we use that 
\[
\Im \int \abs{u_N}^2 \abs{\partial_x^2 u_N}^2 dx = 0,
\]
and a part of the expansion of $\Re(\bar u_N \partial_x u_N) \partial_x \bar u_N$ cancels the first integral in \eqref{auxaux}.
Then, we have
\begin{align*}
\abs{ \int u_N (\partial_x \bar u_N)^2 \partial_x^2 u_N dx}& \le \norm{u_N}_{L^\infty} \norm{\partial_x u_N}_{L^4}^2 \norm{\partial_x^2 u_N}_{L^2} & \\
&\lesssim \norm{u_N}_{H^1}^{5/2} \norm{\partial_x^2 u_N}_{L^2}^{3/2}& \\
& \lesssim (1+\norm{u_N}_{H^2}^{2}),&
\end{align*}
by Gagliardo-Nirenberg and Sobolev embedding, where in the last inequality the omitted constant depends continuously on $\norm{u_N}_{H^1}$. This shows it belongs to $\mathcal{G}$.
The estimate of the evolution driven by the nonlinearity follows from direct Cauchy-Schwarz bounds. In particular, recalling the expression for the functional gradient from \eqref{eq:G1_gradient}, we test it against a generic function $h$
\begin{align} \label{eq:C1_gradient_expansion}
\langle  \mathcal{C}_1'(u_N), h \rangle &= 2\Re \int_{\mathbb{T}} |\partial_{x}u_N|^{2}\overline{u}_{N}h \, dx + 2\Re \int_{\mathbb{T}} |u_N|^{2}(\partial_{x}\overline{u}_{N})(\partial_{x}h) \, dx \nonumber \\
&= 2\Re \int_{\mathbb{T}} |\partial_{x}u_N|^{2}\overline{u}_{N}h \, dx - 2\Re \int_{\mathbb{T}} \partial_{x} \left( |u_N|^{2}\partial_{x}\overline{u}_{N} \right) h \, dx \nonumber \\
&= 2\Re \int_{\mathbb{T}} \mathcal{A}(u_N) h \, dx,
\end{align}
where, after explicitly expanding the spatial derivative in the second integral, we define the auxiliary term
\begin{equation} \label{eq:A_vN_def}
 \mathcal{A}(u_N) := |\partial_{x}u_N|^{2}\overline{u}_{N} - 2\Re(\overline{u}_{N}\partial_{x}u_N)\partial_{x}\overline{u}_{N} - |u_N|^{2}\partial_{x}^{2}\overline{u}_{N}.
\end{equation}
From this algebraic rewriting, it follows that if we test against the nonlinear flow $J_{N}g(u_N)$, we can bound it as
\begin{equation} \label{eq:C1_bound_initial}
\left| \langle  \mathcal{C}_1'(u_N), J_{N}g(u_N) \rangle \right| \lesssim \| \mathcal{A}(u_N)\|_{L^{2}} \|g(u_N)\|_{L^{2}}.
\end{equation}
Based on the definition of the operator $g$ in \eqref{eq:operators_def}, we can easily control the second factor. Since $L^{\infty}$ acts as an algebra multiplier on $L^{2}$, and using the embedding $H^{1} \hookrightarrow L^{\infty}$, we have
\begin{equation} \label{eq:g_vN_bound}
\|g(u_N)\|_{L^{2}} \lesssim \|u_N\|_{L^{\infty}}^{2} \|u_N\|_{H^{1}} \lesssim \|u_N\|_{H^{1}}^{3}.
\end{equation}

On the other hand, we must carefully estimate the contributions inside $A(u_N)$ in \eqref{eq:A_vN_def}. For the first term, by using the Gagliardo-Nirenberg interpolation inequality and standard Sobolev embeddings, we have
\begin{equation} \label{eq:A_term1_bound}
\int_{\mathbb{T}} |\partial_{x}u_N|^{4} |u_N|^{2} \, dx \le \|u_N\|_{L^{\infty}}^{2} \|\partial_{x}u_N\|_{L^{4}}^{4} \lesssim \|u_N\|_{H^{1}}^{5} \|u_N\|_{H^{2}}.
\end{equation}
The second term generated in $A(u_N)$ shares the exact same structural bounds. Finally, the third term involving the second derivative is estimated as
\begin{equation} \label{eq:A_term3_bound}
\int_{\mathbb{T}} |\partial_{x}^{2}u_N|^{2} |u_N|^{4} \, dx \le \|u_N\|_{L^{\infty}}^{4} \|\partial_{x}^{2}u_N\|_{L^{2}}^{2} \lesssim \|u_N\|_{H^{1}}^{4} \|u_N\|_{H^{2}}^{2}.
\end{equation}
By aggregating these bounds and taking the square root to recover the $L^{2}$ norm of $ \mathcal{A}(u_N)$, we deduce
\begin{equation} \label{eq:A_vN_final_bound}
\| \mathcal{A}(u_N)\|_{L^{2}} \le K_{1}(\|u_N\|_{H^{1}}) \left(1 + \|u_N\|_{H^{2}}\right),
\end{equation}
with $K_{1}$ depending continuously on the $H^{1}$ norm. Combining \eqref{eq:A_vN_final_bound} with \eqref{eq:g_vN_bound} via the Cauchy-Schwarz inequality in \eqref{eq:C1_bound_initial}, we definitively obtain
\begin{equation*}
\left| \langle  \mathcal{C}_1'(u_N), J_{N}g(u_N) \rangle \right| \le K_{2}(\|u_N\|_{H^{1}}) \left(1 + \|u_N\|_{H^{2}}\right).
\end{equation*}
Here $K_{2}$ also depends continuously on the lower-order norm, which demonstrates that $\langle  \mathcal{C}_1'(u_N), J_{N}g(u_N) \rangle$ belongs to the remainder class $\mathcal{G}$, precisely as required.

\end{proof}

\begin{lemma} \label{lem:energy_part3}
Recall \eqref{C1C2}.
Let $N>1$ and $u_N \in C(\R, H^\infty)$ be the solution of \eqref{eq:approx_problem}. Then there exists a remainder $G_3 \in \mathcal{G}$, with the estimate \eqref{esti:G} uniform in $N$, such that
\begin{equation} \label{eq:evol_part3}
\frac{d}{dt} \mathcal{C}_2(u_N) = -6c T_2 + G_3(u_N)
\end{equation}
with $T_2$ defined as in \eqref{eq:T2_def}.
\end{lemma}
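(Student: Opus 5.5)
Following the proof of Lemma \ref{lem:energy_part2}, we compute the Fréchet derivative of $\mathcal{C}_2$:
\[
\langle \mathcal{C}_2'(u_N), h\rangle = 2\Re\int u_N(\partial_x\overline{u}_N)^2 h\,dx + 2\Re\int u_N^2\,\partial_x\overline{u}_N\,\partial_x\overline{h}\,dx .
\]
We then split $\frac{d}{dt}\mathcal{C}_2(u_N)$ as in \eqref{eq:G1_split} into the contributions of $-c\partial_x^3u_N$, of $ia\partial_x^2u_N$ and of $J_Ng(u_N)$.

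The nonlinear contribution is handled exactly as in Lemma \ref{lem:energy_part2}. Integrating by parts once in the second integral, we write $\langle\mathcal{C}_2'(u_N),h\rangle = 2\Re\int \mathcal{B}(u_N)h\,dx$, where $\mathcal{B}$ collects terms of the form $u(\partial_x\overline u)^2$, $\overline u\,|\partial_xu|^2$ type products and $|u|^2\partial_x^2 u$ (up to conjugations). Then $\|\mathcal{B}(u_N)\|_{L^2}\le K(\|u_N\|_{H^1})(1+\|u_N\|_{H^2})$ by the Gagliardo–Nirenberg bounds \eqref{eq:A_term1_bound}–\eqref{eq:A_term3_bound}. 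Combined with \eqref{eq:g_vN_bound}, this places the term in $\mathcal{G}$, uniformly in $N$ since $J_N$ is self-adjoint and $L^2$-bounded.

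For the $a$-term, a single integration by parts in the term containing $\partial_x^3 u_N$ leaves only integrands with at most two derivatives on one factor. These are quartic expressions like $\int u_N\partial_x\overline u_N\,\partial_x u_N\,\partial_x^2\overline u_N$ or $\int |u_N|^2|\partial_x^2u_N|^2$-type terms. Any contribution of the form $\Re(i\cdot\text{real})$ vanishes. The remaining ones are bounded by $\|u_N\|_{L^\infty}^2\|\partial_x^2u_N\|_{L^2}^2$ or by $\|u_N\|_{H^1}^{5/2}\|\partial_x^2 u_N\|_{L^2}^{3/2}$, so they lie in $\mathcal{G}$.

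The main step is the dispersive contribution. We have
\[
\langle\mathcal{C}_2'(u_N),-c\partial_x^3u_N\rangle = -2c\Re\int u_N(\partial_x\overline u_N)^2\partial_x^3u_N - 2c\Re\int u_N^2\partial_x\overline u_N\,\partial_x^4\overline u_N .
\]
We integrate by parts until every term has at most two derivatives on each factor, and the top-order terms are cubic in second derivatives times $u\,\partial_x u$. At each stage we drop exact derivatives such as $\Re(\partial_x\overline u\,\partial_x^2 u)|\partial_xu|^2$ and $\partial_x((\partial_x^2\overline u)^2)$. We also use the identity $\Re z_1\Re z_2=\tfrac12\Re(z_1z_2)+\tfrac12\Re(z_1\overline z_2)$ to reduce everything to $T_1$ and $T_2$.

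In the first integral, a single integration by parts gives $2c\Re\int\partial_x(u_N(\partial_x\overline u_N)^2)\partial_x^2u_N$. Its top part, $4c\Re\int u_N\partial_x\overline u_N|\partial_x^2u_N|^2$, equals $4cT_1$. The remaining piece $\Re\int(\partial_x\overline u)^2\partial_xu\,\partial_x^2u$ has only quartic lower order and is controlled as in $\mathcal{G}$ by Gagliardo–Nirenberg.

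In the second integral, two integrations by parts and the relation $u^2\partial_x^2\overline u\,\partial_x^3\overline u=\tfrac12u^2\partial_x(\partial_x^2\overline u)^2$ produce multiples of $T_2$ and $T_1$. Here the terms in $(\partial_x\overline u)^2\partial_x^3\overline u$ type must be integrated once more.

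The obstacle is the bookkeeping. We must check that the $T_1$ coefficients cancel and the $T_2$ coefficients sum to $-6c$. This is consistent with Lemmas \ref{lem:energy_part1}–\ref{lem:energy_part2}, since $\mathcal{E}_N$ then has vanishing $T_1$ coefficient $-2(4d+2e)+\frac{4d+2e}{3c}6c=0$, and likewise for $T_2$. We would first verify this bookkeeping by a Fourier or symbolic check on the quartic symbols. All subcubic-in-$\partial_x^2$ leftovers are bounded by $C(\|u_N\|_{H^1})(1+\|u_N\|_{H^2}^2)$, uniformly in $N$.
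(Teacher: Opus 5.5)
Your setup is correct: the first variation, the three-way splitting, the $\mathcal{G}$-bounds for the $ia\partial_x^2u_N$ and $J_Ng(u_N)$ contributions, and the first dispersive integral, which does give $4cT_1$. Its lower-order piece $\int|\partial_xu_N|^2\Re(\partial_x\overline u_N\partial_x^2u_N)\,dx$ is actually $\frac14\int\partial_x|\partial_xu_N|^4\,dx=0$, though your $\mathcal{G}$ bound also suffices.

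The gap is that you never compute the second dispersive integral $-2c\Re\int u_N^2\partial_x\overline u_N\partial_x^4\overline u_N\,dx$, and that computation is the whole content of the lemma. $T_1$ and $T_2$ are not in $\mathcal{G}$: the natural bound is $C(\|u_N\|_{H^1})\|u_N\|_{H^2}^{5/2}$. So the statement is precisely the claim that this integral equals $-4cT_1-6cT_2$ modulo $\mathcal{G}$. You leave this as ``bookkeeping'' to be checked later. The only support you give is consistency with the cancellation in $\mathcal{E}_N$, and that is circular. The coefficients $\frac{4d+2e}{3c}$ and $\frac{d+3e}{3c}$ in \eqref{eq:modified_energy_def2} are chosen using this lemma and Lemma \ref{lem:energy_part2}, so Theorem \ref{thm:energy} follows from the lemma rather than supporting it.

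The missing step is short. One integration by parts gives $2c\Re\int\partial_x(u_N^2\partial_x\overline u_N)\partial_x^3\overline u_N\,dx$. This is exactly $I_e$ from Lemma \ref{lem:energy_part1} with $e$ replaced by $2c$, so by \eqref{eq:Ie_final} it equals $2c(-2T_1-3T_2)$. Explicitly:
\begin{itemize}
\item the piece $2c\Re\int u_N^2\partial_x^2\overline u_N\partial_x^3\overline u_N\,dx=c\Re\int u_N^2\partial_x\big((\partial_x^2\overline u_N)^2\big)dx$ gives $-2cT_2$;
\item the piece $4c\Re\int u_N|\partial_xu_N|^2\partial_x^3\overline u_N\,dx$ needs one more integration by parts. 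Drop the exact derivative $\frac14\partial_x|\partial_xu_N|^4$, then apply the $\Re z_1\Re z_2$ identity with $z_1=u_N\partial_x^2\overline u_N$ and $z_2=\partial_xu_N\partial_x^2\overline u_N$. This gives $-4cT_2-4cT_1$.
\end{itemize}
Adding your $4cT_1$ yields exactly $-6cT_2$.

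Two smaller corrections. First, the dangerous terms are quadratic, not cubic, in $\partial_x^2u_N$. What separates them from $\mathcal{G}$ is the accompanying factor $\partial_xu_N$; terms like $\int u_N^2(\partial_x^2\overline u_N)^2\,dx$ are harmless. Second, the lower-order piece of the second integral that needs the extra integration by parts is $u_N|\partial_xu_N|^2\partial_x^3\overline u_N$, not $(\partial_x\overline u_N)^2\partial_x^3\overline u_N$.
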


\begin{proof}
As in the proof of Lemma \ref{lem:energy_part2}, we compute the first variation of \(\mathcal C_2\). For any smooth \(h\), we have
\begin{equation*}
\langle \mathcal{C}_2'(u_N), h \rangle = 2 \Re \int u_N (\partial_x \overline{u}_N)^2 h dx - 2 \Re \int \partial_x(u_N^2 \partial_x \overline{u}_N) \overline{h} dx.
\end{equation*}
The evolution in time is given by
\begin{equation*}
\frac{d}{dt} \mathcal{C}_2(u_N) = \langle \mathcal{C}_2'(u_N), -c \partial_x^3 u_N \rangle + \langle \mathcal{C}_2'(u_N), i a \partial_x^2 u_N \rangle + \langle \mathcal{C}_2'(u_N), J_N g(u_N) \rangle.
\end{equation*}
We track the evolution of $\mathcal{C}_2(u_N)$ along the linear third-order flow, as $\mathcal{C}_2$ also contains only two total derivatives and its remaining functional variations strictly fall into $\mathcal{G}$. We have
\begin{align} \label{eq:G2_dispersion}
\langle \mathcal{C}_2'(u_N), -c \partial_x^3 u_N \rangle &= -2c \Re \int u_N (\partial_x \overline{u}_N)^2 (\partial_x^3 u_N) dx + 2c \Re \int \partial_x(u_N^2 \partial_x \overline{u}_N) (\partial_x^3 \overline{u}_N) dx.
\end{align}
Notice that the second integral matches precisely the calculation we already performed for $I_e$ in Lemma \ref{lem:energy_part1} (see \eqref{eq:Ie_expansion}), just scaled by $2c$ instead of $e$. By extracting the result from \eqref{eq:Ie_final}, it evaluates directly to
\begin{equation} \label{eq:G2_part2}
2c \Re \int \partial_x(u_N^2 \partial_x \overline{u}_N) (\partial_x^3 \overline{u}_N) dx = 2c (-2 T_1 - 3 T_2) =- 4c T_1 - 6c T_2.
\end{equation}
For the first integral in \eqref{eq:G2_dispersion}, integration by parts gives
\begin{equation} \label{eq:G2_part1}
\begin{aligned}
-2c \Re \int u_N (\partial_x \overline{u}_N)^2 (\partial_x^3 u_N) dx &= 2c \Re \int \partial_x \left[ u_N (\partial_x \overline{u}_N)^2 \right] (\partial_x^2 u_N) dx  \\
&= 2c \int \abs{\partial_x u_N}^2 \Re(\partial_x \overline{u}_N \partial_x^2 u_N) dx& \\
& + 4c \int \Re(u_N \partial_x \overline{u}_N) \abs{\partial_x^2 u_N}^2 dx =  4c T_1.
\end{aligned}
\end{equation}
Adding \eqref{eq:G2_part2} and \eqref{eq:G2_part1} generates a perfect cancellation of the $T_1$ cross-terms
\begin{equation}\label{aux:11}
\langle \mathcal{C}_2'(u_N), -c \partial_x^3 u_N \rangle = (-4c T_1 - 6c T_2) + 4c T_1 = -6c T_2,
\end{equation}
which provides the desired identity \eqref{eq:evol_part3}.
The estimates of the other terms follow from the same estimates used above to prove \eqref{eq:C1_bound_initial}.
\end{proof}

\begin{proof}[Proof of Theorem \ref{thm:energy}]
The proof follows directly from the previous lemmas. We consider the time derivative of the modified energy defined in \eqref{eq:modified_energy_def}. Combining algebraic results from Lemmas \ref{lem:energy_part1}, \ref{lem:energy_part2}, and \ref{lem:energy_part3}, we have
\begin{align*}
\frac{d}{dt} \mathcal{E}_N(u_N) &= \frac{d}{dt} \norm{\partial_x^2 u_N}_{L^2}^2 - \frac{4d+2e}{3c} \frac{d}{dt} \mathcal{C}_1(u_N) - \frac{d+3e}{3c} \frac{d}{dt} \mathcal{C}_2(u_N) \\
&=- 2(4d+2e)T_1 - 2(d+3e)T_2 + \frac{4d+2e}{3c}(6c T_1) + \frac{d+3e}{3c}(6c T_2) + G_{rem,N}(u_N) \\
&= \left[ -2(4d+2e) + 2(4d+2e) \right] T_1 + \left[ -2(d+3e) + 2(d+3e) \right] T_2 + G_{rem,N}(u_N),\\
&= G_{rem,N}(u_N).
\end{align*}

where
\[
G_{\mathrm{rem},N}
:=2G_1-\frac{4d+2e}{3c}G_2
-\frac{d+3e}{3c}G_3.
\]
Since each $G_i$ belongs to $\mathcal G$ with
bounds uniform in $N$, the same holds for
$G_{\mathrm{rem},N}$, completing the proof.
\end{proof}
\subsection{Proof of Theorem \ref{GWPG}}

We already proved the local well-posedness (LWP) result for $s>3/2$ and $u_0 \in H^s$ in Lemma \ref{lem:local_existence_true_PDE}. We need to prove the conditional global well-posedness (GWP) by applying Theorem \ref{thm:energy}. Let $u \in C((-T_{1,\textnormal{max}}, T_{2,\textnormal{max}}), H^2)$ be the maximal solution of \eqref{eq:main_pde} for some fixed $u_0 \in H^2$. We are going to prove the second item of Theorem  \ref{GWPG}, making the contradictory assumption that
\begin{equation} \label{eq:blowup_contradiction}
T_{2,\textnormal{max}} < +\infty, \quad \sup_{t \in [0 , T_{2,\textnormal{max}})} \norm{u(t)}_{H^1} < +\infty.
\end{equation}

\medskip

We need the following result in order to apply the energy estimate from Theorem \ref{thm:energy}.

\begin{lemma} \label{lem:uniform_H1}
Let $u \in C([0,T_{2,\textnormal{max}}), H^2)$ be the maximal solution of the Cauchy problem \eqref{eq:main_pde} for positive times, satisfying \eqref{eq:blowup_contradiction}. 
 For each $N\geq1$, let $u_N\in C(\mathbb R;H^\infty)$ be the solution of the approximate problem \eqref{eq:approx_problem} with initial datum $J_N u_0$.\\
Then there exists $M_* > 0$ such that for any $T_2 \in (0,T_{2,\textnormal{max}})$ there exists $N_1 > 1$ satisfying
\begin{equation} \label{eq:uniform_H1_lemma}
\sup_{N \ge N_1} \sup_{t \in [0, T_2]} \norm{u_N(t)}_{H^1} \le M_*.
\end{equation}
\end{lemma}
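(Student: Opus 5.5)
We want $M_*$ independent of $T_2$, for example $M_*:=2\sup_{t\in[0,T_{2,\max})}\|u(t)\|_{H^1}+1$, which is finite by \eqref{eq:blowup_contradiction}. The strategy is to show that $u_N\to u$ in $C([0,T_2],H^{1})$ as $N\to\infty$ for every fixed $T_2<T_{2,\max}$. Then for $N\ge N_1(T_2)$ we get $\|u_N(t)-u(t)\|_{H^1}\le 1$ on $[0,T_2]$, and hence \eqref{eq:uniform_H1_lemma}.

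First we control $u_N$ in a norm above $H^{3/2}$ on $[0,T_2]$, uniformly in $N$. Fix $T_2$ and set $M_2:=\sup_{[0,T_2]}\|u(t)\|_{H^2}<\infty$, finite by continuity of $u$ in $H^2$. We then run a continuity/bootstrap argument. Let $N$ be large and let $T_N\le T_2$ be the largest time with $\sup_{[0,T_N]}\|u_N\|_{H^2}\le 2M_2+1$. On $[0,T_N]$ the hypothesis \eqref{ips} of Lemma \ref{UNintermediate2} holds with $s=2$. Its Gronwall argument, with constants depending only on $2M_2+1$ and $T_2$, gives $\sup_{[0,T_N]}\|u_N-u\|_{L^2}\lesssim N^{-1/2}$, uniformly. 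Here we compare $u_N$ directly with $u$ instead of with $u_M$, treating the projection error $(I-J_N)g(u)$ in $H^{1}$ as in \eqref{eq:error_term_M}.

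Interpolating between $L^2$ and $H^2$ gives $\|u_N-u\|_{H^{s'}}\to0$ uniformly on $[0,T_N]$ for every $s'<2$, and in particular for $s'=1$. To close the bootstrap we need the $H^2$ bound to improve strictly. For this we use the $H^2$ energy differential inequality \eqref{eq:diff_ineq_Hs} (first form, with $s=2$): $\frac{d}{dt}\|u_N\|_{H^2}^2\le K_0\|u_N\|_{W^{1,\infty}}^2\|u_N\|_{H^2}^2$. Since $\|u_N\|_{W^{1,\infty}}\lesssim\|u_N\|_{H^{7/4}}\le\|u\|_{H^{7/4}}+o(1)\lesssim M_2+1$, Gronwall gives $\|u_N(t)\|_{H^2}\le\|u_0\|_{H^2}e^{C(M_2)T_2}$.

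This is not obviously $<2M_2+1$, so the bootstrap threshold should instead be $R:=2M_2e^{C(M_2)T_2}+1$. We note that $C$ depends only on the $W^{1,\infty}$ bound, which is controlled by $M_2$ through the $H^{7/4}$ convergence, and not on $R$. The only role of $R$ is in the smallness of $\|u_N-u\|_{H^{7/4}}\lesssim N^{-c}C(R)$, which is beaten by taking $N\ge N_1(T_2)$. Hence $T_N=T_2$ for $N$ large, and then the $H^1$ convergence yields the claim.

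The main obstacle is exactly this closing step: making sure the constant in the $L^2$ Gronwall comparison depends on $R$ while the $H^2$ growth constant depends only on $M_2$. This avoids circularity. If that fails, the fallback is to apply Lemma \ref{UNintermediate} iteratively on short subintervals of length $\sim C_2 M_2^{-2}$, comparing with $u$ at each step as in the restart argument of Lemma \ref{UNintermediate2}.
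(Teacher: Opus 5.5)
Your argument is correct, but it is organized differently from the paper's proof.

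\textbf{The paper's route.} The paper fixes $s\in(3/2,2)$ and sets an $H^s$ threshold $M=\sup_{[0,T_2]}\|u\|_{H^s}+1$. It then iterates over subintervals of fixed length $T_0(M)$:
\begin{itemize}
\item on each subinterval, Lemma \ref{UNintermediate} gives an $H^s$ bound uniform in $N$;
\item this bound feeds the $H^2$ Gronwall estimate \eqref{eq:local_H2_bound};
\item the resulting uniform $H^2$ bound lets Lemma \ref{UNintermediate2} give $H^s$ convergence $u_N\to u$;
\item this convergence brings $\|u_N(T_0)\|_{H^s}$ back below $M$ for large $N$, so the step can be restarted.
\end{itemize}
After $[T_2/T_0]+1$ steps one has $u_N\to u$ in $C([0,T_2],H^{s'})$ for $s'<2$. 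This is essentially your fallback.

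\textbf{Your route.} You replace the iteration by a single continuity argument on all of $[0,T_2]$. You also compare $u_N$ directly with the exact solution $u$, rather than running the Cauchy-sequence argument among the $u_N$. The key point, which you identified correctly, is the separation of constants. The $H^2$ growth rate depends only on $\|u_N\|_{W^{1,\infty}}$, which is pinned near $\|u\|_{W^{1,\infty}}\lesssim M_2$ by the $H^{7/4}$ convergence. The bootstrap threshold $R$ enters only the convergence rate, which is beaten by choosing $N_1=N_1(R)$ after $R$ is fixed.

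\textbf{Trade-offs.} Your version needs neither uniform-in-$N$ local existence times nor the restart step. In the paper, restarting from $u_N(T_0)$, which is not $J_Nu(T_0)$, requires re-running the convergence argument of Lemma \ref{UNintermediate2} on each subinterval. The paper's version, in exchange, reuses Lemmas \ref{UNintermediate} and \ref{UNintermediate2} modularly and needs no new stability estimate between approximate and exact solutions.

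\textbf{Two points to make explicit.} Both are harmless.
\begin{itemize}
\item Since $J_Nw=w+(I-J_N)u\neq w$, the term $\Re\langle J_N(g(u_N)-g(u)),w\rangle$ produces an extra piece $\Re\langle g(u_N)-g(u),(I-J_N)u\rangle$. It is $O(N^{-2})$, with a constant depending on $R$. This is in addition to the projection error $\Re\langle (I-J_N)g(u),w\rangle$, which is $O(N^{-1})$ because $\|g(u)\|_{H^1}\lesssim\|u\|_{H^2}^3$.
\item Differentiating $\|u_N-u\|_{L^2}^2$ when $u$ is only in $C([0,T_2],H^2)$ should be justified through the $H^{-1}$--$H^{1}$ duality, as in the paper's uniqueness proof.
\end{itemize}
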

The proof follows arguments by \cite{tsutsumi1980solutions, hayashi2025global}.

\begin{remark}
The key point of the above lemma is to provide a uniform $H^1$ bound
of the approximate solutions to any compact time interval
$[0,T_2]\subset [0,T_{2,\max})$.
Indeed, by applying directly the $H^1$-strong convergence $u_N \to u$, we could obtain this bound only on the local time of existence of $u_N$ given in Lemma \ref{UNintermediate}. 
\end{remark}

{\begin{proof}
We consider $s\in (3/2, 2)$.
Since $u_0\in H^2\subset H^s$, by Lemma \ref{UNintermediate} for any $M>0$ such that
\begin{equation}\label{M}
\norm{u_N(0)}_{H^s} \le M \qquad \forall N\geq 1,
\end{equation}
there exist $T_0(M) > 0$ and $C_{1,M} > 0$ such that
\begin{equation} \label{eq:local_Hs_bound}
\sup_{N \ge 1} \sup_{t \in [0, T_0(M)]} \norm{u_N(t)}_{H^s} \le C_{1,M}\,.
\end{equation}
Moreover, by the same computations as in Lemma \ref{lem:local_existence_true_PDE}, by bounding the $W^{1,\infty}$ norm with the $H^s$ norm in \eqref{eq:diff_ineq_Hs}, we obtain that
\begin{equation}\label{eq:local_Hs_bound:Gro}
\frac{d}{dt} \norm{u_N(t)}_{H^2}^2 \le C(\norm{u_N(t)}_{H^s}) \norm{u_N(t)}_{H^2}^2,
\end{equation}
with $C>0$ a constant depending continuously on the $H^s$ norm. 
By \eqref{eq:local_Hs_bound} and \eqref{eq:local_Hs_bound:Gro}, there exists $C_{2,M} > 0$ such that
\begin{equation} \label{eq:local_H2_bound}
\norm{u_N(t)}_{H^2} \le \norm{u_N(0)}_{H^2} \exp(C_{2,M} t), \quad \forall t \in [0,T_0(M)].
\end{equation}
By the contradictory assumption,
\begin{equation}\label{Mstar}
M_* = \sup_{t \in [0, T_{2,\textnormal{max}})} \norm{u(t)}_{H^1} + 1,
\end{equation}
is finite.
For any $T_2 \in (0, T_{2,\textnormal{max}})$ we fix 
\begin{equation*}
M = \sup_{t \in [0, T_2]} \norm{u(t)}_{H^s} + 1,
\end{equation*}
which satisfies \eqref{M}. {Then \eqref{eq:local_H2_bound} is satisfied for this choice of $M$}.
By the strong convergence of $u_N \to u$ in $C([0, T_0], H^{s})$ obtained in Lemma \ref{UNintermediate2} (applied to $H^2$ and $s<2$), there exists $N_0 > 1$ such that for all $N \ge N_0$
\begin{equation*}
\norm{u_N(T_0)}_{H^s} \le \norm{u(T_0)}_{H^s} + 1 \le M.
\end{equation*}
Then we can apply the same argument and \eqref{eq:local_Hs_bound} with initial datum $u_N(T_0)$. Iterating the estimates for at most $[T_2/T_0] + 1$ times, we have $u_N \to u$ in $C([0, T_2], H^{s'})$ for any $s' <2$. Then by the convergences and \eqref{Mstar} it holds
\begin{equation*}
\sup_{N \ge N_1} \sup_{t \in [0, T_2]} \norm{u_N(t)}_{H^1} \le M_*.
\end{equation*}
\end{proof}}

\begin{proof}[Proof of Theorem \ref{GWPG}]
We start the proof by considering the second assertion of the Theorem \ref{GWPG} for $s=2$, since the proof for the negative times will follow by similar computations. Suppose by contradiction \eqref{eq:blowup_contradiction}, then by Lemma \ref{lem:uniform_H1}, there exists a constant $M_* > 0$ (depending possibly on the initial datum) such that, for any $T \in (0, T_{2, \textnormal{max}})$, there exists an integer $N_1 =N_1(T)> 1$ such that
\begin{equation} \label{eq:H1_uniform_bound}
\sup_{N \ge N_1} \sup_{t \in [0, T]} \norm{u_N(t)}_{H^1} \le M_*.
\end{equation}
Under this assumption, we want to prove that the $H^2$ norm of $u_N$ cannot blow up, which in turn contradicts \eqref{eq:blowup_contradiction} and implies $T_{2,\textnormal{max}} = +\infty$, thus establishing global existence.

We consider the modified energy $\mathcal{E}_N(u_N(t))$ constructed in Theorem \ref{thm:energy}. Recalling its definition \eqref{eq:modified_energy_def}, we can rewrite it as
\begin{equation*}
\norm{\partial_x^2 u_N(t)}_{L^2}^2 = \mathcal{E}_N(u_N(t)) + \frac{4d+2e}{3c} \mathcal{C}_1(u_N(t)) + \frac{d+3e}{3c} \mathcal{C}_2(u_N(t)).
\end{equation*}
Since the functionals $\mathcal{C}_1$ and $\mathcal{C}_2$ contain at most first-order derivatives, they are bounded by constants which depend continuously on the $H^1$ norm. Specifically, by the bounds 
\[
|\mathcal{C}_1(u_N)|, |\mathcal{C}_2(u_N)|\le \| u_N \|_{L^{\infty}}^2 \| u_N \|_{H^1}^2, 
\]
combined with the assumption \eqref{eq:H1_uniform_bound}, we have that there exists a constant $K_1 = K_1(M_*) > 0$ such that for all $t \in [0, T]$ and for all $N \ge N_1$
\begin{equation} \label{eq:G_bound}
\abs{\frac{4d+2e}{3c} \mathcal{C}_1(u_N(t)) + \frac{d+3e}{3c} \mathcal{C}_2(u_N(t))} \le K_1(M_*).
\end{equation}
This establishes the equivalence between the modified energy and the $L^2$-norm of the second derivative up to a bounded error
\begin{equation} \label{eq:energy_equivalence}
\abs{\norm{\partial_x^2 u_N(t)}_{L^2}^2 - \mathcal{E}_N(u_N(t))} \le K_1(M_*), \quad \forall t \in [0, T].
\end{equation}
In particular, evaluating \eqref{eq:energy_equivalence} at time $t=0$, we can bound the initial modified energy
\begin{equation} \label{eq:initial_energy_bound}
\mathcal{E}_N(u_N(0)) \le \norm{\partial_x^2 J_N u_0}_{L^2}^2 + K_1(M_*) \le \norm{u_0}_{H^2}^2 + K_1(M_*).
\end{equation}

Now, we use the differential identity \eqref{eq:modified_energy_evol} from Theorem \ref{thm:energy}. We recall that $G_{rem,N} \in \mathcal{G}$ and the corresponding constant in the $\mathcal G$-bound is uniform with respect to $N$. Combining that property with \eqref{eq:H1_uniform_bound}, the definition of the class $\mathcal{G}$ guarantees the existence of a constant $K_2 = K_2(M_*) > 0$ such that
\begin{equation*}
\left|\frac{d}{dt} \mathcal{E}_N(u_N(t))\right| = \abs{G_{rem,N}(u_N(t))} \le K_2(M_*) \left( 1 + \norm{u_N(t)}_{H^2}^2 \right).
\end{equation*}
Integrating this inequality over the time interval $[0, t]$ for any $t \le T$, we obtain
\begin{equation} \label{eq:integrated_energy}
\mathcal{E}_N(u_N(t)) \le \mathcal{E}_N(u_N(0)) + K_2(M_*) \int_0^t \left( 1 + \norm{u_N(\tau)}_{H^2}^2 \right) d\tau.
\end{equation}
To close the estimate, we need to reconstruct the full $H^2$ norm on the left-hand side. We use the standard norm equivalence $\norm{u_N}_{H^2}^2 \sim \norm{u_N}_{L^2}^2 + \norm{\partial_x^2 u_N}_{L^2}^2$ and the exact mass conservation from Lemma \ref{lem:global_approx}, which states that $\norm{u_N(t)}_{L^2}^2 = \norm{J_N u_0}_{L^2}^2 \le \norm{u_0}_{H^2}^2$. 

Combining the lower bound from \eqref{eq:energy_equivalence}, the initial bound \eqref{eq:initial_energy_bound}, and the integrated inequality \eqref{eq:integrated_energy}, we have
\begin{align} \label{eq:gronwall_setup}
\norm{u_N(t)}_{H^2}^2 &\leq 2\norm{u_N(t)}_{L^2}^2 + 2\norm{\partial_x^2 u_N(t)}_{L^2}^2 \nonumber \\
&\leq 2\norm{u_0}_{H^2}^2 + 2\left( \mathcal{E}_N(u_N(t)) + K_1(M_*) \right) \nonumber \\
&\leq 2\norm{u_0}_{H^2}^2 +2 K_1(M_*) + 2\mathcal{E}_N(u_N(0)) +2 K_2(M_*) \int_0^t \left( 1 + \norm{u_N(\tau)}_{H^2}^2 \right) d\tau \nonumber \\
&\le 4\norm{u_0}_{H^2}^2 + 4K_1(M_*) + 2K_2(M_*) \int_0^t \left( 1 + \norm{u_N(\tau)}_{H^2}^2 \right) d\tau.
\end{align}
Let us define the constants $\alpha = 1 + 4\norm{u_0}_{H^2}^2 + 4K_1(M_*)$ and $\beta = 2K_2(M_*)$. Then from \eqref{eq:gronwall_setup}, we obtain the integral inequality
\begin{equation*}
1 + \norm{u_N(t)}_{H^2}^2 \le \alpha + \beta \int_0^t \left( 1 + \norm{u_N(\tau)}_{H^2}^2 \right) d\tau.
\end{equation*}
Applying Gronwall's lemma, we deduce that for all $t \in [0, T]$ and uniformly for $N \ge N_1$
\begin{equation}\label{stimun}
\norm{u_N(t)}_{H^2}^2 \le \alpha e^{\beta t}.
\end{equation}
Reasoning as in the proof of Lemma \ref{UNintermediate2}, since $\alpha$ and $\beta$ are uniform in $N$, we obtain 
\begin{equation} \label{eq:final_gronwall_bound}
\norm{u(t)}_{H^2}^2 \le \left( 1 + 4\norm{u_0}_{H^2}^2 + 4K_1(M_*) \right) e^{2K_2(M_*) T_{2,\max}}.
\end{equation}
Since the right-hand side is a finite constant depending only on the initial datum, the time $T$, and the $H^1$ bound $M_*$, the $H^2$ norm of the solution cannot blow up. This implies that we can extend the existence of the solution for some $T_*>T_{2,\text{max}}$, which is a contradiction since $T_{2,\text{max}}$ is the maximal time of existence. Thus, for those solutions whose $H^1$ norm is bounded,  $T_{2,\text{max}}=+\infty$, and the proof of the blow-up alternative for positive times is complete for $s=2$. 

The proof of the third item for $s=2$ in Theorem \ref{GWPG} follows a similar strategy.

\medskip

{The case $u \in C((-T_{1,\textnormal{max}},T_{2,\textnormal{max}}),H^s)$ with $s>2$ follows from the case $s=2$. Indeed, assume by contradiction that
\[
T_{2,\textnormal{max}}<+\infty,
\qquad
\sup_{t\in[0,T_{2,\textnormal{max}})}
\norm{u(t)}_{H^1}<+\infty.
\]
Let $v\in C((-S_{1,\textnormal{max}},S_{2,\textnormal{max}}),H^2)$ be the maximal $H^2$-solution to \eqref{eq:main_pde} with the same initial datum $v(0)=u_0$. Such a solution exists since $u_0\in H^s\subset H^2$. By the uniqueness established in Lemma \ref{lem:local_existence_true_PDE}, the two solutions coincide on their common interval of existence,
\[
v(t)=u(t),
\qquad
t\in
[0,\min\{T_{2,\textnormal{max}},S_{2,\textnormal{max}}\}).
\]
We claim that $S_{2,\textnormal{max}}>T_{2,\textnormal{max}}$. Indeed, otherwise $S_{2,\textnormal{max}}\leq T_{2,\textnormal{max}}<+\infty$, and the identity $v=u$ on $[0,S_{2,\textnormal{max}})$ would imply
\[
\sup_{t\in[0,S_{2,\textnormal{max}})}
\norm{v(t)}_{H^1}
\leq
\sup_{t\in[0,T_{2,\textnormal{max}})}
\norm{u(t)}_{H^1}
<+\infty.
\]
This contradicts the blow-up alternative already proved in the case $s=2$, according to which a finite maximal $H^2$ existence time necessarily implies blow-up of the $H^1$ norm. Hence $S_{2,\textnormal{max}}>T_{2,\textnormal{max}}$.
Consequently, the $H^2$-solution $v$ is defined on an interval strictly larger than $[0,T_{2,\textnormal{max}}]$ and then there exists a finite constant
\[
M:=\sup_{t\in[0,T_{2,\textnormal{max}}]}
\norm{v(t)}_{H^2}<+\infty.
\]
Since $u=v$ on $[0,T_{2,\textnormal{max}})$, we therefore have
\begin{equation}\label{M:H2}
\sup_{t\in[0,T_{2,\textnormal{max}})}
\norm{u(t)}_{H^2}\leq M.
\end{equation}
Recalling the first inequality in \eqref{eq:diff_ineq_Hs} and using the above bound, by Gronwall's inequality,
\[
\norm{u(t)}_{H^s}^2 \leq \norm{u_0}_{H^s}^2 \exp\!\left(C(M)t\right) \leq \norm{u_0}_{H^s}^2 \exp\!\left(C(M)T_{2,\textnormal{max}}\right), \qquad t\in[0,T_{2,\textnormal{max}}),
\]
for some $C(M)>0$ depending only on $M$ and on the coefficients of the equation. Therefore,
\[
\sup_{t\in[0,T_{2,\textnormal{max}})} \norm{u(t)}_{H^s}<+\infty.
\]
This uniform $H^s$ bound contradicts the maximality of $T_{2,\textnormal{max}}$.  The proof for negative times is completely analogous, and the blow-up alternative is therefore proved for every $s>2$.}
\end{proof}

\subsection{Global Well-Posedness.}

We are now able to prove Corollary \ref{e0GWP}.

\begin{proof}[Proof of Corollary \ref{e0GWP}]
Fix $s\geq 2$ and an initial datum $u_0 \in H^s$, then the local existence follows from Lemma \ref{lem:local_existence_true_PDE}. We assume that the coefficients in \eqref{eq:main_pde} satisfy the condition 
\begin{equation}\label{condition}
abc\neq 0, \qquad e \neq 0,
\end{equation}
then we need to prove that the solution extends globally, i.e., $u \in C(\mathbb{R}, H^s)$. We proceed by contradiction. Let $u \in C([0, T_{\text{max}}), H^s)$ be the maximal solution (for positive times), and assume $T_{\text{max}} < +\infty$. By the blow-up alternative established in Theorem \ref{GWPG}, this implies
\begin{equation} \label{eq:blowup_H1_assumption}
\limsup_{t \nearrow T_{\text{max}}} \norm{u(t)}_{H^1} = +\infty.
\end{equation}

To study this possible blow-up, we introduce the modified energy
\begin{equation} \label{eq:modified_energy_H1}
I(u) := \int_{\mathbb{T}} |u_x|^2 dx - \frac{d+e}{6c} \int_{\mathbb{T}} |u|^4 dx + \frac{3bc-a(d+e)}{3ce} \Im \int_{\mathbb{T}} \overline{u} u_x dx,
\end{equation}
which is well-defined since $ec \neq 0$. By the Gagliardo-Nirenberg and Cauchy-Schwarz inequalities, we have
\[
\norm{u}_{L^4}^4 \lesssim \norm{u}_{L^2}^3 \norm{u}_{H^1}, \qquad \abs{\Im \int \overline{u} u_x dx} \le \norm{u}_{L^2} \norm{u_x}_{L^2}.
\]
Thus, for any $v \in H^1$, we have
\begin{equation} \label{eq:energy_equivalence_H1}
\abs{ I(v) - \norm{\partial_x v}_{L^2}^2 } \le C(\|v\|_{L^2}) \left( 1 + \norm{v}_{H^1} \right),
\end{equation}
with $C>0$ depending continuously on the $L^2$ norm.

Consider now $u_N \in C(\mathbb{R}, H^\infty)$ the solution of the approximate Cauchy problem \eqref{eq:approx_problem}. We now explicitly evaluate the time derivative of $I(u_N(t))$ on approximate solutions $\partial_t u_N = A u_N + J_N g(u_N)$, where $A$ and $g$ are as in \eqref{eq:operators_def}. We set $q := |u_N|^2$ and define
\begin{equation*}
K(u_N) := \Im \int_{\mathbb{T}} u_N^2 \overline{\partial_x u_N}^2 dx, \qquad M(u_N) := \int_{\mathbb{T}} \partial_x q |\partial_x u_N|^2 dx.
\end{equation*}

We start by computing 
\[
\frac{d}{dt} \int |\partial_x u_N|^2 dx = -2 \Re \int \partial_t u_N \overline{\partial_{xx} u_N} dx.
\]
Since $u_N$ is frequency-localized, the projection $J_N$ acts as the identity on $\overline{\partial_{xx} u_N}$. Substituting the PDE terms we first observe that 
\[
-2 \Re \int i a \partial_{xx} u_N \overline{\partial_{xx} u_N} dx = 0.
\]
Then for the term involving the coefficient $b$ we have
\[
2b \Im \int q u_N \overline{\partial_{xx} u_N} dx = -2b \Im \int \partial_x(q u_N) \overline{\partial_x u_N} dx.
\]
Since 
\[
\partial_x q u_N \overline{\partial_x u_N} = q |\partial_x u_N|^2 + u_N^2 \overline{\partial_x u_N}^2,
\]
taking the imaginary part leaves exactly $-2bK$.
For the term involving the coefficient $c$ we have 
\[
2c \Re \int \partial_{xxx} u_N \overline{\partial_{xx} u_N} dx = c \int \partial_x |\partial_{xx} u_N|^2 dx = 0.
\]
The part involving the coefficient $d$ satisfies 
\[
2d \Re \int q \partial_x u_N \overline{\partial_{xx} u_N} dx = d \int q \partial_x |\partial_x u_N|^2 dx = -d \int \partial_x q |\partial_x u_N|^2 dx = -dM(u_N).
\]
Finally, the term involving the coefficient $e$ is 
\[
2e \Re \int u_N^2 \overline{\partial_x u_N} \overline{\partial_{xx} u_N} dx = e \Re \int u_N^2 \partial_x(\overline{\partial_x u_N}^2) dx = -2e \Re \int u_N \partial_x u_N \overline{\partial_x u_N}^2 dx.
\]
Since $M(u_N)= 2 \Re \int u_N \partial_x u_N \overline{\partial_x u_N}^2 dx$, this yields $-eM(u_N)$.
Summing these gives:
\begin{equation} \label{eq:deriv_kinetic}
\frac{d}{dt} \int_{\mathbb{T}} |\partial_x u_N|^2 dx = -2b K(u_N) - (d+e) M(u_N).
\end{equation}

We now compute $\frac{d}{dt} \Im \int \overline{u_N} \partial_x u_N dx = -2 \Im \int \partial_t u_N \overline{\partial_x u_N} dx$. As before, the self-adjointness of $J_N$ and the identity $J_N\partial_x u_N=\partial_x u_N$ allow us to remove the projection. The term involving the coefficient $a$ is again zero. Moreover
\begin{align*}
-2b \Re \int q u_N \overline{\partial_x u_N} dx = -b \int q \partial_x q dx = 0, \\
2c \Im \int \partial_{xxx} u_N \overline{\partial_x u_N} dx = -2c \Im \int |\partial_{xx} u_N|^2 dx = 0, \\
2d \Im \int q |\partial_x u_N|^2 dx = 0. 
\end{align*}
Finally
\[
-2 \Im \int (-e u_N^2 \overline{\partial_x u_N}) \overline{\partial_x u_N} dx = 2e \Im \int u_N^2 \overline{\partial_x u_N}^2 dx = 2eK(u_N).
\]
Thus
\begin{equation} \label{eq:deriv_momentum}
\frac{d}{dt} \Im \int_{\mathbb{T}} \overline{u_N} \partial_x u_N dx = 2e K(u_N).
\end{equation}

Now we consider $\frac{d}{dt} \int q^2 dx = 4 \Re \int q \partial_t u_N \overline{u_N} dx$. Here, the test function $q \overline{u_N}$ has frequencies up to $3N$, generating a projection remainder. By computations similar to those introduced above we have
\begin{equation} \label{eq:deriv_L4}
\frac{d}{dt} \int_{\mathbb{T}} |u_N|^4 dx = -4a K - 6c M + 4 \Re \int_{\mathbb{T}} |u_N|^2 u_N \overline{(J_N - I) g(u_N)} dx.
\end{equation}
Combining \eqref{eq:deriv_kinetic}, \eqref{eq:deriv_momentum}, and \eqref{eq:deriv_L4} into the time derivative of $I(u_N)$, we verify the exact cancellation of $M(u_N)$ and $K(u_N)$. 
For the term involving $M(u_N)$
\[
-(d+e)M(u_N) - \frac{d+e}{6c}(-6cM(u_N)) = 0.
\]
For the one involving $K(u_N)$
\begin{equation}\label{Term:K}
-2bK(u_N) - \frac{d+e}{6c}(-4aK(u_N)) + \frac{3bc-a(d+e)}{3ce}(2eK(u_N))  = 0.
\end{equation}
The principal terms cancel, leaving only the projection remainder $\tilde{R}_N(t)$:
\begin{equation} \label{eq:I_deriv_uN}
\frac{d}{dt} I(u_N(t)) = \tilde{R}_N(t) := \frac{d+e}{6c} 4 \Re \int_{\mathbb{T}} \overline{g(u_N)} (I - J_N) (|u_N|^2 u_N) dx.
\end{equation}

By leveraging the properties of the spectral projector $J_N$ and the fractional Leibniz rule, this error is bounded by
\begin{align}
\abs{\tilde{R}_N(t)} &\lesssim \norm{g(u_N)}_{L^2} \norm{(I - J_N)(\abs{u_N}^2 u_N)}_{L^2} \nonumber \\
&\lesssim \left( \norm{u_N}_{L^\infty}^2 \norm{u_N}_{L^2} +\norm{u_N}_{L^\infty}^2 \norm{\partial_x u_N}_{L^2} \right) \frac{1}{N}  \left( \|\abs{u_N}^2 u_N\|_{H^1} \right) \nonumber \\
&\lesssim \frac{1}{N} \norm{u_N}_{H^1}^6, \label{eq:RN_bound}
\end{align}
from which 
\begin{equation} \label{eq:dt_I_closed}
\left| \frac{d}{dt} I(u_N(t)) \right| \lesssim \frac{1}{N} \left(1 + \|u_N\|_{H^1}^6\right).
\end{equation}
Moreover, by \eqref{eq:energy_equivalence_H1} and Lemma \ref{lem:JN_properties} we have
\begin{align} \label{eq:H1_equiv_explicit}
I(u_{N}(0)) \lesssim \left(1 + \|u_{0}\|_{H^{1}}^{2}\right),
\end{align}
where the constants are uniform in $N$ and depend continuously on $\|u_0\|_{L^2}$.

By integrating \eqref{eq:dt_I_closed} over time for any $t \in [0,T_{\mathrm{max}})$, we obtain
\begin{equation*}
I(u_{N}(t))- I(u_{N}(0))
\lesssim
 \frac{1}{N} \int_{0}^{t}
\left(1+\|u_{N}(\tau)\|_{H^{1}}^{6}\right)d\tau.
\end{equation*}
We now aim to reconstruct the $H^{1}$ norm on the left-hand side. By \eqref{eq:energy_equivalence_H1}, \eqref{conservaL2}  and \eqref{eq:H1_equiv_explicit}, we have
\begin{align} \label{eq:H1_reconstruction}
\|u_{N}(t)\|_{H^{1}}^{2}
&\leq \|u_{N}(0)\|_{L^{2}}^{2}
+ \|\partial_{x}u_{N}(t)\|_{L^{2}}^{2} \nonumber \\
&\le \|u_{N}(0)\|_{L^{2}}^{2}
+ I(u_{N}(t))
+C(\| u_0\|_{L^2}) \left(1+\|u_{N}(t)\|_{H^{1}}\right) \nonumber \\
&\lesssim \left(1+\|u_{0}\|_{H^{1}}^{2}\right)
+ \left(1+\|u_{N}(t)\|_{H^{1}}\right) \nonumber \\
&\quad
+ \frac{1}{N} \int_{0}^{t}
\left(1+\|u_{N}(\tau)\|_{H^{1}}^{6}\right)d\tau,
\end{align}
where the omitted constants are uniform in $N$ and depend continuously on $\|u_0\|_{L^2}$. Let us introduce the auxiliary variable $X(t):=1+\|u_{N}(t)\|_{H^{1}}^{2}.$ By Young's inequality, for any $C_0>1$ there exists $C_1>0$ such that
\begin{equation*}
C_{0}X(t)^{1/2}
\leq \frac{1}{2}X(t)+C_{1}.
\end{equation*}
Therefore, after absorbing the lower-order term on the left-hand side, we obtain
\begin{equation} \label{eq:X_integral_ineq}
X(t)
\leq C_{2}
+ \frac{1}{N} C_{3}\int_{0}^{t}X(\tau)^{3}d\tau
=:G(t),
\end{equation}
for appropriate constants $C_{2},C_{3}>0$ uniform in $N$.
Differentiating the upper bound $G(t)$, we obtain
\begin{equation} \label{eq:G_ode_general}
\frac{d}{dt}G(t)
\leq \frac{C_3}{N} G(t)^{3},
\end{equation}
with the initial condition $G(0)=C_{2}.$ Here, $C_{2}>0$ depends continuously on $\|u_{0}\|_{H^{1}}$, but is independent of $N$.
From \eqref{eq:G_ode_general} we obtain
\begin{equation} \label{eq:G_solution_general}
G(t)
\leq
\frac{C_{2}}
{\left(
1-2\frac{1}{N} C_{4}t
\right)^{\frac{1}{2}}},
\end{equation}
for some constant $C_{4}>0$ uniform in $N$.
Let $\widetilde{K}_{1}>\sqrt{C_{2}}$ be fixed. By setting 
\[
\widetilde{K}_{2}:= \frac{1-\left(\frac{C_2}{\widetilde{K}_1^2}\right)^2} {2C_4}>0,
\]
for any $ t \leq \widetilde{K}_{2}N$ we have
\begin{equation*}
G(t)\leq \widetilde{K}_{1}^2.
\end{equation*}
Consequently, for any $N\geq 1$ and any $t\in[0,\widetilde{K}_{2}N]$,
\begin{equation}\label{eq:safe_timespan}
\|u_{N}(t)\|_{H^{1}}^{2}
\leq G(t)
\leq \widetilde{K}_{1}^2.
\end{equation}

We now force the contradiction. Since $T_{\text{max}} < +\infty$, by \eqref{eq:blowup_H1_assumption} we can fix $t^* < T_{\text{max}}$ such that $\norm{u(t^*)}_{H^1} > 2 \widetilde{K}_{1}$. Because $t^*$ is strictly smaller than $T_{\text{max}}$, $u \in C([0, t^*], H^s)$. Since $u_N \to u$ strongly in $H^1$ uniformly on $[0, t^*]$, there exists an index $N_0 \ge 1$ such that for all $N \ge N_0$,
\begin{equation} \label{eq:contradiction_lower}
\norm{u_N(t^*)}_{H^1} > \widetilde{K}_{1}.
\end{equation}
As $N \to +\infty$, the interval $[0,  \widetilde{K}_{2} N]$ eventually covers $t^*$. Choosing $N_1 \ge N_0$ large enough such that $ \widetilde{K}_{2} N_1 > t^*$, the established bound \eqref{eq:safe_timespan} implies $\norm{u_{N_1}(t^*)}_{H^1} \le \widetilde{K}_{1}$. This contradicts \eqref{eq:contradiction_lower}. Therefore, $T_{\text{max}} = +\infty$, and the solution extends globally.  
\end{proof}

\begin{remark}\label{rem:Hirota} The global well-posedness result of Corollary \ref{e0GWP} extends to the case $e=0$ and $d=3bc/a$, corresponding to the integrable Hirota equation \cite{hirota1973exact}. Indeed, in this case the functional
\[
\int_{\mathbb T}|u_x|^2\,dx-\frac{b}{2a}\int_{\mathbb T}|u|^4\,dx
\]
is conserved along smooth solutions, as follows from the cancellations in \eqref{eq:deriv_kinetic}-\eqref{Term:K}. Together with mass conservation, this provides a uniform $H^1$ bound. The argument used to prove Corollary \ref{e0GWP} then yields global well-posedness in $H^s(\mathbb T)$ for every $s\geq2$.
\end{remark}

\section{The Perturbed NLS-Dysthe Equations}\label{per:DNLS}

In this section, we establish long-time uniform (in $\eps$) bounds for the perturbative problem \eqref{app:D}. To go beyond the standard $\mathcal{O}(1)$ existence time and reach time scales of order $\mathcal{O}(\eps^{-1})$, we exploit the conservation laws of the NLS part. Since these quantities are conserved by the unperturbed flow, their variation along the perturbed dynamics is of order $\eps$.

{It is well known that the unperturbed cubic NLS equation
\begin{equation}\label{NLS}
i \partial_t u + a \partial_x^2 u + b |u|^2 u = 0,
\end{equation}
admits an infinite hierarchy of conserved quantities
$\{E_n(u)\}_{n\in\mathbb N}$. We briefly recall their inductive
construction. Following \cite{kenig2025well}, consider first the normalized
cubic NLS
\begin{equation}\label{eq:integrable_nls}
i\partial_t q+\partial_x^2q=2\sigma |q|^2q,
\qquad \sigma\in\{-1,1\}.
\end{equation}
Its conservation laws can be defined by
\begin{equation}\label{eq:wn_recursion}
E_n^\sigma(q) := \Re\int_{\T}q\,\overline{w_n[q]}\,dx,
\end{equation}
where
\begin{equation}\label{eq:wn_definition}
w_1[q]=q, \qquad w_{n+1}[q] = -i\partial_x w_n[q] + \sigma\overline q \sum_{k=1}^{n-1} w_k[q]w_{n-k}[q], \qquad n\geq1.
\end{equation}
We set
\[
\tau=at, \qquad \lambda:=\sqrt{\frac{2|a|}{|b|}}, \qquad \sigma:=-\textnormal{sign}(ab), \qquad u(t,x):=\lambda q(\tau,x),
\]
with $q$ solving \eqref{eq:integrable_nls}. By this normalization, we have that $u$ solves \eqref{NLS}. Accordingly, we define the
conserved quantities for \eqref{NLS} by
\[
E_n(u):= \lambda^2 E_n^\sigma\left(\frac{u}{\lambda}\right).
\]
With this normalization, the leading quadratic part of each odd energy
$E_{2s+1}$ has coefficient one. The remaining coefficients depend on
$a$ and $b$, while the number of factors and the distribution of spatial
derivatives in each monomial are unchanged.}

For any integer $s \ge 1$, the odd invariant $E_{2s+1}$ has the general form
\begin{equation}\label{Es}
E_{2s+1}(u) = \int_{\T} |\partial_x^s u|^2 dx + \int_{\T} P_s(u, \partial_x u, \dots, \partial_x^{s-1} u) dx,
\end{equation}
where $P_s$ is a real-valued polynomial containing lower-order nonlinear terms. Its differential is given by 
\begin{equation}\label{deri}
dE_{2s+1}(u)[h]:=2\Re \langle \nabla E_{2s+1}(u) ,h\rangle_{L^2},\quad \nabla E_{2s+1}(u) = (-1)^s \partial_x^{2s} u + \nabla P_s(u). 
\end{equation}
In particular, it follows from the inductive definition of $E_{2s+1}$ in \cite{kenig2025well} that $P_s(u)$ is a finite linear combination of differential monomials
\begin{equation}\label{mon}
\prod_{i=1}^{M}\partial_x^{\alpha_i}v_i, \qquad v_i\in\{u,\bar u\},
\end{equation}
satisfying
\begin{equation}\label{prop:1}
\max_i\alpha_i\le s-1, \qquad \sum_{i=1}^M\alpha_i\le2s-2
\end{equation}
with $1 \leq M \leq M_s$, for some $M_s>0$ which depends on $s$ and diverges as $s \to \infty$.  In particular, we can be more precise and prove the following characterization.

\begin{lemma}
For any $u \in H^\infty$, $P_s(u)$ is a finite linear combination of elements in
\begin{equation}\label{Dn}
\small
\begin{aligned}
    D_s = \left\{\prod_{i=1}^{M} \partial_x^{\alpha_i} v_i, \, \,  v_i \in \{u, \bar{u}\} \mid  4 \le M \le M_s, \, \,  \max \alpha_i \leq s-1, \, \, \sum_{i=1}^M \alpha_i +M = 2s+2\right\}.
\end{aligned}
\end{equation} 
Moreover, the energies \eqref{Es} satisfy
\begin{equation}\label{equiv}
\left| E_{2s+1}(u)- \|\partial_x^s u\|_{L^2}^2\right|
\leq C(\|u\|_{L^2})
\left(1+\|u\|_{H^s}^{\sigma_s}\right),
\end{equation}
for some $\sigma_s<2$ depending only on $s$, where
$C(\|u\|_{L^2})>0$ depends continuously on the $L^2$-norm of $u$.
\end{lemma}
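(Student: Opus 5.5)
The characterization splits into two claims: a structural statement (a scaling/degree count) and an analytic estimate (Gagliardo--Nirenberg interpolation). We plan to handle them in that order.

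\emph{Structural part.} We will use the scaling symmetry of the normalized NLS \eqref{eq:integrable_nls}. Assign to $q$ and $\bar q$ weight $1$ and to each $\partial_x$ weight $1$. We prove by induction on $n$ in \eqref{eq:wn_definition} that $w_n[q]$ is a finite linear combination of monomials of total weight exactly $n$.

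The case $n=1$ is trivial. For the inductive step, note that $-i\partial_x w_n$ raises the weight by one. Also, $\bar q\,w_k w_{n-k}$ has weight $1+k+(n-k)=n+1$. The monomials in $w_n$ also carry an odd number of factors: $q$-factors exceed $\bar q$-factors by one. This is consistent with gauge invariance.

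It follows that $q\,\overline{w_{2s+1}}$ is a sum of monomials $\prod\partial_x^{\alpha_i}v_i$ with $\sum\alpha_i+M=2s+2$. The rescaling $u=\lambda q$ only multiplies coefficients, so the weight is unchanged.

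We then split off the quadratic part. Monomials with $M=2$ have $\sum\alpha_i=2s$. After integration by parts inside $\Re\int$, these reduce to $\int|\partial_x^s u|^2$, with coefficient one by the stated normalization; this is the leading term in \eqref{Es}. Gauge invariance forces $M$ to be even, because the number of $u$ factors equals the number of $\bar u$ factors. Hence the remaining terms have $M\ge4$.

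The bound $\max\alpha_i\le s-1$ is already recorded in \eqref{prop:1}, which we take from the inductive definition in \cite{kenig2025well}. If needed, we redistribute derivatives by integration by parts, which preserves the weight. $M\le M_s$ is automatic since the total weight is $2s+2$. This gives \eqref{Dn}.

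\emph{Estimate.} Take a monomial in $D_s$ with $M\ge4$ factors. We bound $\int|\prod_i\partial_x^{\alpha_i}v_i|$ by placing the two highest-derivative factors in $L^2$ and the rest in $L^\infty$.

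Each factor is controlled by Gagliardo--Nirenberg on $\T$:
\[
\|\partial_x^{\alpha}u\|_{L^p}\lesssim \|u\|_{L^2}^{1-\theta}\|u\|_{H^s}^{\theta}, \qquad \theta=\frac{\alpha+\tfrac12-\tfrac1p}{s}.
\]
Since $\alpha_i\le s-1$ and we only need $p\in\{2,\infty\}$, every exponent satisfies $\theta<1$ and the inequality is valid.

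Summing the exponents of $\|u\|_{H^s}$ over the factors gives
\[
\frac{1}{s}\Big(\sum\alpha_i+\tfrac{M-2}{2}\Big)=\frac{1}{s}\Big(2s+2-M+\tfrac{M-2}{2}\Big)=\frac{1}{s}\Big(2s+1-\tfrac M2\Big)\le \frac{2s-1}{s}<2,
\]
where the last step uses $M\ge4$. The zero mean-free correction on $\T$ is harmless: we use the inhomogeneous $H^s$ norm, which only produces lower powers absorbed in $1+\|u\|_{H^s}^{\sigma_s}$.

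We then take $\sigma_s:=(2s-1)/s$. Young's inequality gives $\|u\|_{H^s}^{\theta}\le 1+\|u\|_{H^s}^{\sigma_s}$ for any $\theta\le\sigma_s$, which yields \eqref{equiv} with a constant depending polynomially on $\|u\|_{L^2}$.

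The main obstacle is the structural step. One must rigorously verify that the quadratic part is exactly $\|\partial_x^s u\|_{L^2}^2$. One must also confirm that no $M=2$ or odd-$M$ monomials survive in $P_s$, and that the cap $\alpha_i\le s-1$ holds after the integrations by parts. This requires tracking the recursion \eqref{eq:wn_definition} with its gauge and weight bookkeeping. The analytic estimate is routine once the count $\sum\alpha_i+M=2s+2$ is established.
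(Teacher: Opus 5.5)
Your proposal is correct and follows essentially the same route as the paper: a weight induction on the recursion \eqref{eq:wn_definition} giving $\sum_i\alpha_i+M=2s+2$, with $\max_i\alpha_i\le s-1$ taken from \eqref{prop:1}, followed by Gagliardo--Nirenberg interpolation yielding the total $H^s$-exponent $2-\frac{M-2}{2s}\le 2-\frac1s$.

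The differences are cosmetic. You obtain $M\ge4$ from gauge parity, whereas the paper observes that the only linear term of $w_n$ is $(-i\partial_x)^{n-1}q$ and all other monomials have at least three factors; that observation also settles the quadratic-part issue you flag as the main obstacle. You also use an $L^2\cdot L^2\cdot L^\infty\cdots L^\infty$ H\"older split instead of placing every factor in $L^M$, and this gives exactly the same exponent.
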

\begin{proof}
{To prove the first claim and the coercivity estimate, we record two additional structural properties of the monomials \eqref{mon}, recalling that we already have \eqref{prop:1}. For a differential monomial $Q=\prod_{j=1}^M\partial_x^{\alpha_j}v_j$, we consider the weight
\[
\mathrm{wt}(Q):=M+\sum_{j=1}^M\alpha_j.
\]
A direct induction using \eqref{eq:wn_definition} shows that every monomial of $w_n$ has weight
$n$: differentiation increases the weight by one, while $\bar u\,w_kw_{n-k}$ has weight $1+k+(n-k)=n+1$. Moreover, the only linear term of $w_n$ is $(- i \partial_x)^{n-1}u$, while all the remaining terms have at least three factors. Consequently, after multiplication by the additional factor $u$ in the definition of $E_{2s+1}$, since $n=2s+1$ every nonlinear monomial has
\[
M\ge4,\qquad \sum_{j=1}^M\alpha_j+M=2s+2.
\]
}

Fix such a monomial and set
$A:=\sum_{j=1}^{M}\alpha_j$. By H\"older's inequality and the periodic
Gagliardo--Nirenberg inequality,
\[
\begin{aligned}
\left| \int_{\T}\prod_{j=1}^{M}\partial_x^{\alpha_j}v_j\,dx \right| &\leq \prod_{j=1}^{M}\|\partial_x^{\alpha_j}v_j\|_{L^M}\\
&\lesssim \|u\|_{L^2}^{\,M-q_M}\|u\|_{H^s}^{\,q_M},
\end{aligned}
\]
where $v_j\in\{u,\bar u\}$ and
\[
q_M :=\sum_{j=1}^{M} \frac{\alpha_j+\frac12-\frac1M}{s} =\frac{A+\frac{M}{2}-1}{s} =2-\frac{M-2}{2s}.
\]
Here we used $\alpha_j\leq s-1$, so that all the interpolation
exponents lie in $(0,1)$. Since $M\geq4$,
\[
q_M\leq 2-\frac1s<2.
\]
As $P_s$ contains only finitely many monomials, summing the previous
bounds gives
\[
\left|\int_{\T}P_s(u)\,dx\right| \leq C(\|u\|_{L^2}) \left(1+\|u\|_{H^s}^{\,2-\frac1s}\right),
\]
which proves \eqref{equiv}, for instance with $\sigma_s=2-\frac1s$.
\end{proof}

By the coercive estimate above and mass conservation, control of $\left| E_{2s+1} \right| $ yields control of the $H^s$ norm. As stated in \cite{kenig2025well} we can obtain a complex modified Korteweg-de Vries (mKdV) equation, for which this hierarchy of quantities is conserved, by considering the Hamiltonian flow generated by $E_4$,  Namely, by taking the explicit expression of $E_4(u)$ related to \eqref{NLS}, the NLS and mKdV vector fields read as
\begin{align}
X_{NLS}(u) &:= i a \partial_x^2 u + i b |u|^2 u, \label{eq:X_NLS} \\
X_{mKdV}(u) &:=  \partial_x^3 u + \frac{3b}{a} |u|^2 \partial_x u, \label{eq:X_mKdV}
\end{align}
where, with the normalization above, $X_{mKdV}$ coincides, up to reversal of the flow direction and a constant, with the fourth flow of the NLS hierarchy \eqref{eq:X_NLS}. Moreover, we have 
\[
\Re\inner{\nabla E_{2s+1}(u)}{X_{NLS}(u)}_{L^2} = 0, \qquad  \Re\inner{\nabla E_{2s+1}(u)}{X_{mKdV}(u)}_{L^2} = 0,
\]
for all sufficiently smooth functions. We observe that, in the case of $e=0$ and $d= 3bc/a$ the model \eqref{eq:main_pde} is the Hirota equation, with vector field given by the sum of \eqref{eq:X_NLS} and a multiple of \eqref{eq:X_mKdV}. Then in this case $E_{2s+1}$ are integrals of motion of the full equation. Hence, the equation is globally well posed and Theorem \ref{thm:1} follows immediately in this case.
For all other choices of coefficients, to rigorously bound the solution of the perturbative model \eqref{app:D}, we rewrite its spectral approximation $u_N = J_N u_N$ separating the integrable flows from the non-integrable remainder as follows
\begin{equation} \label{eq:approx_Dysthe}
\partial_t u_N = J_N \left( X_{NLS}(u_N) - \eps cX_{mKdV}(u_N) -  \eps R(u_N) \right),
\end{equation}
where the remainder vector field is a first-order vector field, obtained by matching the original coefficients $d$ and $e$ from \eqref{eq:main_pde}
\begin{equation} \label{eq:R_def}
R(u) := \left( d - \frac{3bc}{a} \right) |u|^2 \partial_x u + e u^2 \partial_x \overline{u}.
\end{equation}

\begin{lemma} \label{lem:E5_evolution}
Fix $s \in \mathbb{N}, s\geq 2$. Let $u_N \in C(\mathbb{R}, H^\infty)$ be the solution of the approximate flow \eqref{eq:approx_Dysthe}. Then the NLS invariant $E_{2s+1}$ satisfies the differential inequality
\begin{equation} \label{eq:E5_differential_ineq}
\left| \frac{d}{dt} E_{2s+1}(u_N) \right| \le K\left( \eps + \frac{1}{N^2} \right) \left( 1 + \|u_N\|_{H^s}^{p} \right),
\end{equation}
where $K > 0$ is a constant depending on $s$ and the coefficients of the equation, while the exponent $p > 2$ depends only on $s$.
\end{lemma}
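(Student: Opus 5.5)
We compute the derivative of $E_{2s+1}$ along \eqref{eq:approx_Dysthe} using \eqref{deri}:
\[
\frac{d}{dt}E_{2s+1}(u_N)=2\Re\big\langle \nabla E_{2s+1}(u_N),\,J_N\big(X_{NLS}(u_N)-\eps cX_{mKdV}(u_N)-\eps R(u_N)\big)\big\rangle_{L^2}.
\]
Writing $J_N=I-(I-J_N)$, this splits into three groups. First, the exact pairings with $X_{NLS}$ and $X_{mKdV}$, which vanish by the conservation identities $\Re\langle\nabla E_{2s+1},X_{NLS}\rangle=\Re\langle\nabla E_{2s+1},X_{mKdV}\rangle=0$. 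Second, the projection errors $\Re\langle (I-J_N)\nabla E_{2s+1}(u_N),X(u_N)\rangle$, where $X\in\{X_{NLS},X_{mKdV},R\}$. Third, the genuinely non-integrable term $-2\eps\Re\langle\nabla E_{2s+1}(u_N),R(u_N)\rangle$. The lemma reduces to showing that the third term is $O(\eps)(1+\|u_N\|_{H^s}^p)$ and the second is $O(N^{-2}+\eps N^{-2})(1+\|u_N\|_{H^s}^p)$.

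For the $R$-term, the linear part of the gradient gives $(-1)^s\Re\langle\partial_x^{2s}u_N,R(u_N)\rangle=\Re\langle\partial_x^su_N,\partial_x^sR(u_N)\rangle$. This is where a derivative loss threatens. We expand $\partial_x^s(|u|^2u_x)$ and $\partial_x^s(u^2\bar u_x)$ by Leibniz.
\begin{itemize}
\item The top-order pieces $|u|^2\partial_x^{s+1}u$ and $u^2\partial_x^{s+1}\bar u$ are handled by integration by parts exactly as in Lemma \ref{lem:energy_part1}: $\Re\int|u|^2\partial_x^{s+1}u\,\partial_x^s\bar u=-\tfrac12\int\partial_x|u|^2|\partial_x^su|^2$, and $\Re\int u^2\partial_x^{s+1}\bar u\,\partial_x^s\bar u=-\Re\int uu_x(\partial_x^s\bar u)^2$.
\item The next pieces involve $\partial_x^s$ landing on $|u|^2$ or $u^2$, paired with $u_x$. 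Symmetrically this only costs $\|\partial_x^su\|_{L^2}^2\|u\|_{W^{1,\infty}}^2$.
\item All remaining terms have at most $s$ derivatives on each factor.
\end{itemize}
Using $\|u\|_{W^{1,\infty}}\lesssim\|u\|_{H^s}$ for $s\ge2$, each is bounded by $\|u_N\|_{H^s}^4$. For the polynomial part $\nabla P_s$, its monomials (derived from $D_s$) carry at most $2s-1$ derivatives, with at most $s-1$ on a factor before differentiation. We pair with $R$ and integrate by parts to move excess derivatives so that no factor carries more than $s$. Alternatively, we use $\Re\langle\nabla P_s,h\rangle=\tfrac12 dP_s[h]$ directly: $dP_s(u)[R]$ has factors $\partial_x^{\alpha}R$ with $\alpha\le s-1$, so at most $s$ derivatives on $u$. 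This is bounded by Hölder, Gagliardo–Nirenberg, and mass conservation \eqref{conservaL2} by $C(1+\|u_N\|_{H^s}^p)$.

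For the projection errors, we note $J_N u_N=u_N$, so $(I-J_N)\partial_x^{2s}u_N=0$. Only $(I-J_N)\nabla P_s(u_N)$ survives, and similarly on the other side $(I-J_N)X$ hits only the nonlinear parts of $X$. Thus each error is of the form $\Re\langle(I-J_N)F(u_N),G(u_N)\rangle$ with $F,G$ nonlinear. The cleanest route is to write it as $\langle (I-J_N)\Lambda^{\sigma}F,\Lambda^{-\sigma}(I-J_N)G\rangle$. One gains $N^{-2}$ by placing two extra units of smoothness, using $\|(I-J_N)f\|_{H^{r}}\lesssim N^{-2}\|f\|_{H^{r+2}}$ (Lemma \ref{lem:JN_properties}). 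The worst case is $G=X_{NLS}$ or $X_{mKdV}$ nonlinear parts, namely $|u|^2u$ and $|u|^2u_x$, against $\nabla P_s$ containing up to $2s-1$ derivatives. We would distribute derivatives so that the total weight is controlled by $H^s$ norms: $\|(I-J_N)\nabla P_s\|_{H^{-s+1}}\lesssim N^{-2}\|\nabla P_s\|_{H^{-s+3}}$, paired with $\|(I-J_N)(|u|^2u_x)\|_{H^{s-1}}\lesssim\|u\|_{H^s}^3$.

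We expect this bookkeeping — ensuring that the $N^{-2}$ gain is achievable without exceeding $H^s$ control, with $\nabla P_s\in H^{-s+3}$ bounded by a power of $\|u\|_{H^s}$ — to be the main obstacle. Where $2s-1$ derivatives on $\nabla P_s$ are too many, we would instead exploit the pairing structure: the $\partial_x^{2s}$-type mass of $\nabla P_s$ has weight count $2s+1$ from $D_s$, leaving room for a two-derivative gain when $s\ge2$. Failing that, we would settle for any power $N^{-\delta}$, which suffices for the later application.

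Collecting the pieces yields \eqref{eq:E5_differential_ineq} with some $p>2$ depending on $s$ (e.g. $p=\max(4,M_s+2)$).
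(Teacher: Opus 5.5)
\textbf{Verdict: genuine gap.} Your handling of the integrable pairings and of the non-integrable term $\eps\Re\langle\nabla E_{2s+1}(u_N),R(u_N)\rangle$ is sound and follows the paper. Your shortcut of writing $\Re\langle\nabla P_s,R\rangle=\tfrac12\, d\big(\int P_s\big)(u)[R]$, with factors $\partial_x^{\alpha}R$, $\alpha\le s-1$, is a clean substitute for the integrations by parts. The gap is the projection error. You flag it as the main obstacle and do not close it, and the one concrete estimate you write down there is false.

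\textbf{Why your estimate fails.} The monomials of $\nabla P_s$ carry at most $2s-2$ derivatives in total, not $2s-1$. However, the structure $D_s$ allows $\nabla P_s$ to contain terms such as $|u|^2\partial_x^{2s-2}u$, arising for instance from quartic terms like $|u|^2|\partial_x^{s-1}u|^2$. For $u\in H^s$ such a term lies only in $H^{2-s}$. Consequently $\|\nabla P_s\|_{H^{3-s}}$ is not bounded by any power of $\|u\|_{H^s}$, and the pairing $H^{1-s}\times H^{s-1}$ against $(I-J_N)(|u|^2\partial_x u)$ gains only $N^{-1}$. Your intermediate target $O(N^{-2}+\eps N^{-2})$ is therefore both unnecessary and out of reach of these bounds. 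A single mode of $u_N$ at frequency $N$ already makes $(I-J_N)\nabla P_s$ of size $N^{s-2}$ and $(I-J_N)(|u|^2u_x)$ of size $N^{1-s}$ in $L^2$, so the $\eps$-weighted error is of size $\eps N^{-1}$. Your fallbacks do not rescue this. A ``two-derivative gain'' from the weight count is not an argument, and settling for $N^{-\delta}$ does not prove the inequality as stated.

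\textbf{Missing step.} Split the nonlinearity as $g_\eps=g_0+\eps g_1$, with $g_0=ib|u|^2u$ and $g_1=-d|u|^2\partial_xu-eu^2\partial_x\bar u$, and accept different gains for the two pieces.
\begin{itemize}
\item Since $u_N=J_Nu_N$ and at most one factor of each monomial of $\nabla P_s$ carries $\ge s$ derivatives, Bernstein gives $\|\nabla P_s(u_N)\|_{L^2}\lesssim N^{s-2}(1+\|u_N\|_{H^s}^{p_3})$.
\item On the other side, $\|(I-J_N)g_0(u_N)\|_{L^2}\lesssim N^{-s}\|u_N\|_{H^s}^3$ and $\|(I-J_N)g_1(u_N)\|_{L^2}\lesssim N^{1-s}\|u_N\|_{H^s}^3$.
\item The product is $\lesssim (N^{-2}+\eps N^{-1})(1+\|u_N\|_{H^s}^{p_3+3})\le (N^{-2}+\eps)(\cdots)$, which is exactly what the lemma needs. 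For the $\eps$-weighted part you only need an $N$-independent bound, not an $N^{-2}$ gain.
\end{itemize}
If you prefer to stay in your duality framework, pair in $H^{-s}\times H^{s}$ for $g_0$ and in $H^{1-s}\times H^{s-1}$ for $g_1$. You must then prove $\|\nabla P_s(u)\|_{H^{2-s}}\lesssim 1+\|u\|_{H^s}^{p}$. Test against $\phi\in H^{s-2}$; when the variation lands as $\partial_x^{s-1}\phi$, move one derivative off it. After that, at most one remaining factor carries $s$ derivatives and goes in $L^2$, and the rest go in $L^\infty$ via $H^s\hookrightarrow W^{s-1,\infty}$.
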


\begin{proof}
We evaluate the time derivative of the invariant along the trajectory of \eqref{eq:approx_Dysthe}. Defining 
\[
X := X_{NLS}(u_N) - \eps c X_{mKdV}(u_N) -  \eps R(u_N),
\]
by \eqref{eq:approx_Dysthe} we have
\begin{equation} \label{eq:Es_chain_rule}
\frac{d}{dt} E_{2s+1}(u_N) =2 \Re \inner{\nabla E_{2s+1}(u_N)}{\partial_t u_N}_{L^2} = 2\Re \inner{\nabla E_{2s+1}(u_N)}{J_N X}_{L^2}.
\end{equation}
We decompose this inner product as
\begin{align} \label{eq:Es_decomp}
\Re \inner{\nabla E_{2s+1}(u_N)}{J_N X}_{L^2} &= \Re \inner{\nabla E_{2s+1}(u_N)}{X}_{L^2} \nonumber \\
&\quad - \Re \inner{\nabla E_{2s+1}(u_N)}{(I - J_N) X}_{L^2}.
\end{align}
By the integrability of the NLS and mKdV flows, their exact invariants are preserved when evaluated on $u_N \in H^\infty$. Therefore, the conservative terms vanish exactly
\begin{equation*}
\Re \inner{\nabla E_{2s+1}(u_N)}{X_{NLS}(u_N) - \eps c X_{mKdV}(u_N)}_{L^2} = 0.
\end{equation*}
Consequently, the first term in \eqref{eq:Es_decomp} simplifies entirely to the remainder
\begin{equation} \label{eq:Es_remainder}
\Re \inner{\nabla E_{2s+1}(u_N)}{X}_{L^2} = -\eps \Re \inner{\nabla E_{2s+1}(u_N)}{R(u_N)}_{L^2}.
\end{equation}

For the second term in \eqref{eq:Es_decomp}, we recall \eqref{deri} and define $A_\eps$ and $g_\eps$ as in \eqref{eq:operators_def}, with $(c,d,e)$ replaced by $(\eps c,\eps d,\eps e)$. Since $u_N=J_Nu_N$ has Fourier support in $|k|\leq N$, the linear term $A_\eps u_N$ has the same property. Using also the self-adjointness of $J_N$, we obtain
\begin{equation} \label{eq:Es_proj_error}
\Re \inner{\nabla E_{2s+1}(u_N)}{(I - J_N) X}_{L^2} = \Re \inner{(I - J_N) \nabla P_s(u_N)}{(I - J_N) g_\eps(u_N)}_{L^2}.
\end{equation}

We start by bounding \eqref{eq:Es_remainder}. We have
\begin{equation}\label{da:stimare}
\eps \Re \inner{\nabla E_{2s+1}(u_N)}{R(u_N)}_{L^2} = \eps \Re \left( \int_{\T} \partial_x^{s} \overline{u}_N \partial_x^s R(u_N) dx + \inner{\nabla P_s(u_N)}{R(u_N)}_{L^2} \right).
\end{equation}
From \eqref{eq:R_def} we know that $R$ is a first-order differential operator, so the most singular terms arise only when all $s$ spatial derivatives fall onto the first derivative of $u_N$. Expanding this via the Leibniz rule, we explicitly isolate the highest-order terms from the commutators
\begin{align}
\partial_x^s R(u_N) &= \partial_x^s \left( \left( d - \frac{3bc}{a} \right) |u_N|^2 \partial_x u_N + e u_N^2 \partial_x \overline{u}_N \right) \nonumber \\
&= \left( d - \frac{3bc}{a} \right) |u_N|^2 \partial_x^{s+1} u_N + e u_N^2 \partial_x^{s+1} \overline{u}_N + Q(u_N), \label{eq:R_expansion}
\end{align}
where $Q(u_N)$ collects all the lower-order derivative products
\begin{equation*}
Q(u_N) := \left( d - \frac{3bc}{a} \right) [\partial_x^s, |u_N|^2] \partial_x u_N + e [\partial_x^s, u_N^2] \partial_x \overline{u}_N.
\end{equation*}
We estimate $Q(u_N)$ in $L^2$ by applying the Leibniz rule, Gagliardo-Nirenberg and H\"older inequalities, to obtain 
\begin{align*}
\norm{[\partial_x^s, |u_N|^2] \partial_x u_N}_{L^2} &\lesssim \norm{\partial_x(|u_N|^2)}_{L^\infty} \norm{\partial_x^{s-1} (\partial_x u_N)}_{L^2} + \norm{\partial_x^s(|u_N|^2)}_{L^2} \norm{\partial_x u_N}_{L^\infty} \\
&\lesssim \norm{u_N}_{W^{1,\infty}}^2 \norm{u_N}_{H^s}.
\end{align*}
By similar estimates for the term involving the coefficient $e$, we have 
\[
\norm{Q(u_N)}_{L^2} \lesssim \norm{u_N}_{W^{1,\infty}}^2 \norm{u_N}_{H^s}.
\]
By Cauchy-Schwarz, its contribution to the integral is bounded by
\begin{equation} \label{eq:Q_bound}
\abs{\eps \Re \int_{\T} (\partial_x^s \overline{u}_N) Q(u_N) dx} \le \eps \norm{\partial_x^s u_N}_{L^2} \norm{Q(u_N)}_{L^2} \lesssim \eps \norm{u_N}_{H^s}^4.
\end{equation}

For the other two terms in \eqref{eq:R_expansion}, an integration by parts prevents any derivative loss. Explicitly, for the first term we compute:
\begin{align}
\Re \int_{\T} (\partial_x^s \overline{u}_N) \left( |u_N|^2 \partial_x^{s+1} u_N \right) dx &= \int_{\T} |u_N|^2 \Re \left( \overline{\partial_x^s u_N} \partial_x (\partial_x^s u_N) \right) dx \nonumber \\
&= \int_{\T} |u_N|^2 \frac{1}{2} \partial_x \left( |\partial_x^s u_N|^2 \right) dx \nonumber \\
&= - \frac{1}{2} \int_{\T} \partial_x(|u_N|^2) |\partial_x^s u_N|^2 dx. \nonumber
\end{align}
Bounding this directly using H\"older's inequality gives
\begin{equation}\label{serve}
\abs{- \frac{1}{2} \int_{\T} \partial_x(|u_N|^2) |\partial_x^s u_N|^2 dx} \le \frac{1}{2} \norm{\partial_x(|u_N|^2)}_{L^\infty} \norm{\partial_x^s u_N}_{L^2}^2 \lesssim \norm{u_N}_{H^s}^4.
\end{equation}
For the second singular term we have
\begin{align*}
\Re \int_{\T} (\partial_x^s \overline{u}_N) \left( u_N^2 \partial_x^{s+1} \overline{u}_N \right) dx &= \Re \int_{\T} u_N^2 \frac{1}{2} \partial_x \left( (\partial_x^s \overline{u}_N)^2 \right) dx \nonumber \\
&= - \frac{1}{2} \Re \int_{\T} \partial_x(u_N^2) (\partial_x^s \overline{u}_N)^2 dx,
\end{align*}
which can be bounded as \eqref{serve}. 

{We need to bound the second term in the r.h.s. of \eqref{da:stimare}. Recall that every monomial of $P_s$ belongs to the class $D_s$ in \eqref{Dn}. Taking the variational derivative of its elements and integrating by parts the derivatives falling on the variation produces monomials of $\nabla P_s$ with $M-1$ factors and with the same total number of derivatives. Hence every monomial of $\nabla P_s$ contains at most $2s-2$ derivatives in total. Since $R$ in \eqref{eq:R_def} is cubic and contains exactly one spatial derivative, every term in
\[
\widetilde R(u_N):= \Re\inner{\nabla P_s(u_N)}{R(u_N)}_{L^2}
\]
is the integral of a monomial with at least six factors and at most $2s-1$ derivatives in total, and then there could be at most one derivative of order $\gamma_1>s$. Moreover, by transferring one derivative at a time from this factor to the remaining ones by integration by parts, no factor carries more than $s$ derivatives. Indeed, after $r=\gamma_1-s$ integrations by parts, the other factors carry at most $s-1$ derivatives in total:
\[
\sum_{j=2}^M\gamma_j+r = \sum_{j=1}^M\gamma_j-s\le s-1\,.
\]
Thus, each of the other factors carries at most $s-1$ derivatives.  Consequently, $\widetilde R(u_N)$ is a finite linear combination of integrals of monomials belonging to
\[
\widetilde D_s= \left\{ \prod_{i=1}^{M}\partial_x^{\alpha_i}v_i,\quad v_i\in\{u,\bar u\}\ \middle|\ 6\leq M\leq M_s+2,\quad \max_i\alpha_i\leq s,\quad \sum_{i=1}^{M}\alpha_i\leq2s-1 \right\}.
\]
After relabeling the factors so that $\alpha_1\geq\alpha_2\geq\cdots \geq \alpha_M$, we have $\alpha_2\leq s-1$; otherwise $\alpha_1 +\alpha_2\geq2s$, contradicting the total derivative count. Hence
\begin{equation}\label{idea}
\left| \int_{\T}\prod_{i=1}^{M}\partial_x^{\alpha_i}v_i\,dx \right| \leq \|\partial_x^{\alpha_1}v_1\|_{L^2} \|\partial_x^{\alpha_2}v_2\|_{L^2} \prod_{i=3}^{M}\|\partial_x^{\alpha_i}v_i\|_{L^\infty} \lesssim \|u_N\|_{H^s}^{p_1},
\end{equation}
by the Sobolev embedding $H^s(\T)\hookrightarrow W^{s-1,\infty}(\T)$, for some $p_1>0$.}

Combining all these terms, we obtain 
\begin{equation}\label{termine:1}
\left|\eps \Re \inner{\nabla E_{2s+1}(u_N)}{R(u_N)}_{L^2}\right| \lesssim \eps (1+\|u_N\|_{H^s}^{p_2}),
\end{equation}
for some $p_2>0$.

\medskip

{Finally, we estimate the projection error in \eqref{eq:Es_proj_error}, by making more precise the structure of $\nabla P_s(u_N)$. As observed above, taking the variational derivative of a monomial in $D_s$ removes one factor and redistributes, by integration by parts, the derivatives of the factor being differentiated among the remaining ones. Therefore, the total number of derivatives is preserved, and every monomial appearing in $\nabla P_s(u_N)$ belongs to
\[
(D_s)_{\nabla}:= \left\{ \prod_{i=1}^{L}\partial_x^{\beta_i}v_i,\quad v_i\in\{u,\bar u\}\ \middle|\ 3\leq L\leq L_s,\quad \max_i\beta_i\leq2s-2,\quad \sum_{i=1}^{L}\beta_i\leq2s-2 \right\}.
\]
Relabeling so that $\beta_1\geq\beta_2\geq\cdots\geq\beta_L$, we have $\beta_2\leq s-1$: otherwise $\beta_1+\beta_2\geq2s$, contradicting $\sum_i\beta_i\leq2s-2$. Hence, all factors except possibly the first one can be estimated in $L^\infty$ by the Sobolev embedding $H^s(\T)\hookrightarrow W^{s-1,\infty}(\T)$.}

On the other hand, since $u_N=J_Nu_N$ and $\beta_1\le 2s-2$, the frequency localization gives
\[
\|\partial_x^{\beta_1}u_N\|_{L^2} \lesssim N^{\max\{\beta_1-s,0\}}\|u_N\|_{H^s} \lesssim N^{s-2}\|u_N\|_{H^s}.
\]
Hence, for every monomial occurring in $\nabla P_s(u_N)$,
\[
\left\| \prod_{i=1}^{L}\partial_x^{\beta_i}v_i \right\|_{L^2} \lesssim N^{s-2}\|u_N\|_{H^s}^{q},
\]
for some positive $q$. Since $\nabla P_s$ contains only finitely many such monomials, there exists an exponent $p_3>0$, depending only on $s$, such that
\begin{equation}\label{eq:gradPs_projection_bound}
\|\nabla P_s(u_N)\|_{L^2} \lesssim N^{s-2}\left(1+\|u_N\|_{H^s}^{p_3}\right).
\end{equation}

We now estimate the second factor in \eqref{eq:Es_proj_error}. 
We decompose the scaled nonlinearity as $g_\eps(u)=g_0(u)+\eps g_1(u),$
where
\[
g_0(u):=ib|u|^2u, \qquad g_1(u):=-d|u|^2\partial_xu-eu^2\partial_x\overline{u}.
\]
By property 4 of Lemma \ref{lem:JN_properties},
\begin{align}
\|(I-J_N)g_\eps(u_N)\|_{L^2} &\le \|(I-J_N)g_0(u_N)\|_{L^2} +\eps\|(I-J_N)g_1(u_N)\|_{L^2} \nonumber\\
&\lesssim \frac{1}{N^s}\|g_0(u_N)\|_{H^s} + \frac{\eps}{N^{s-1}}\|g_1(u_N)\|_{H^{s-1}}. \label{eq:geps_projection_bound}
\end{align}
Since $s\ge2$, the standard Sobolev product estimates give
\[
\|g_0(u_N)\|_{H^s} \lesssim \|u_N\|_{H^s}^3, \qquad \|g_1(u_N)\|_{H^{s-1}} \lesssim \|u_N\|_{H^s}^3.
\]
Therefore,
\begin{equation}\label{eq:geps_projection_bound_final}
\|(I-J_N)g_\eps(u_N)\|_{L^2} \lesssim \left( \frac{1}{N^s} + \frac{\eps}{N^{s-1}} \right) \|u_N\|_{H^s}^3.
\end{equation}

Combining \eqref{eq:Es_proj_error}, \eqref{eq:gradPs_projection_bound}, and
\eqref{eq:geps_projection_bound_final}, we obtain
\begin{align}
\left| \Re\inner{(I-J_N)\nabla P_s(u_N)} {(I-J_N)g_\eps(u_N)}_{L^2} \right| &\le \|(I-J_N)\nabla P_s(u_N)\|_{L^2} \|(I-J_N)g_\eps(u_N)\|_{L^2} \nonumber\\
&\le \|\nabla P_s(u_N)\|_{L^2} \|(I-J_N)g_\eps(u_N)\|_{L^2} \nonumber\\
&\lesssim N^{s-2} \left( \frac{1}{N^s} + \frac{\eps}{N^{s-1}} \right) \left(1+\|u_N\|_{H^s}^{p_3+3}\right) \nonumber\\
&\lesssim \left( \frac{1}{N^2} + \eps \right) \left(1+\|u_N\|_{H^s}^{p_3+3}\right), \label{eq:Es_proj_explicit}
\end{align}
where in the last inequality we used $N\ge1$.

Combining \eqref{eq:Es_proj_explicit} with \eqref{termine:1}, we conclude that
\[
\left| \frac{d}{dt}E_{2s+1}(u_N) \right| \lesssim \left( \eps+\frac{1}{N^2} \right) \left( 1+\|u_N\|_{H^s}^{p} \right),
\]
for some $p>2$ depending only on $s$, which proves
\eqref{eq:E5_differential_ineq}.
\end{proof}

\begin{lemma} \label{lem:timescale_existence}
Let $s \geq 2 $ and $u_0 \in H^s$. There exists a positive constant $\tilde{K}_0$, such that for any choice of $\tilde{K}_1 > \tilde{K}_0$, we can find $\tilde{K}_2 > 0$ for which the following holds.

For every $\eps \in (0, 1)$, there exists $N_*=N_*(\eps)$ such that for all $N > N_*$, the solution $u_N$ to the approximate equation \eqref{eq:approx_Dysthe} with initial data $J_N u_0$ satisfies
\begin{equation}\label{claim:Nstar}
\sup_{t \in [-\tilde{T}, \tilde{T}]} \|u_N(t)\|_{H^{[s]}} \le \tilde{K}_1, 
\end{equation}
where the symmetric existence time scale is $\tilde{T} = \tilde{K}_2 \eps^{-1}$ and $[s]=\max\{k \in \mathbb{N} \mid k \leq s\}$.
\end{lemma}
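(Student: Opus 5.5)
Set $m:=[s]\geq 2$ and work with the NLS invariant $E_{2m+1}$, since Lemma \ref{lem:E5_evolution} needs an integer index. Because $u_0\in H^s\subset H^m$, every approximate solution $u_N$ lies in $C(\R,H^\infty)$ by Lemma \ref{lem:global_approx}, and its mass is conserved: $\|u_N(t)\|_{L^2}=\|J_Nu_0\|_{L^2}\le\|u_0\|_{L^2}$. The idea is to combine three ingredients. The coercivity estimate \eqref{equiv} turns control of $E_{2m+1}$ into control of $\|u_N\|_{H^m}$. The slow-variation bound \eqref{eq:E5_differential_ineq} controls how $E_{2m+1}$ changes. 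A continuity (bootstrap) argument then runs on the time scale $\eps^{-1}$. For negative times we either use the symmetry $u(t,x)\mapsto\overline{u(-t,-x)}$ or repeat the argument verbatim.

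\textbf{Step 1 (choice of $\tilde K_0$).} By \eqref{equiv} with $\sigma_m<2$, Young's inequality and mass conservation, there is $C_*=C_*(\|u_0\|_{L^2})$ such that for all $N$ and $t$
\[
\tfrac12\|u_N(t)\|_{H^m}^2-C_*\le E_{2m+1}(u_N(t))\le 2\|u_N(t)\|_{H^m}^2+C_*.
\]
At $t=0$ we have $\|J_Nu_0\|_{H^m}\le\|u_0\|_{H^m}$, so $E_{2m+1}(u_N(0))\le 2\|u_0\|_{H^m}^2+C_*=:E_0$, uniformly in $N$ and $\eps$. Set $\tilde K_0^2:=2(E_0+C_*)+1$. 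Then any time where $E_{2m+1}(u_N)\le E_0$ gives $\|u_N\|_{H^m}^2<\tilde K_0^2$.

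\textbf{Step 2 (bootstrap).} Fix $\tilde K_1>\tilde K_0$ and let $t_*$ be the first time with $\|u_N(t_*)\|_{H^m}=\tilde K_1$. Such a first time is well defined because $t\mapsto\|u_N(t)\|_{H^m}$ is continuous and starts below $\tilde K_0$. On $[0,t_*]$, Lemma \ref{lem:E5_evolution} gives
\[
E_{2m+1}(u_N(t))\le E_0+K\big(\eps+N^{-2}\big)\big(1+\tilde K_1^{p}\big)\,t .
\]
Now choose $N_*(\eps)$ with $N_*^{-2}\le\eps$, and choose
\[
\tilde K_2:=\frac{\tfrac12\tilde K_1^2-C_*-E_0}{2K(1+\tilde K_1^p)}.
\]
This is positive since $\tilde K_1^2>\tilde K_0^2=2(E_0+C_*)+1$, and it depends only on $u_0$, $\tilde K_1$, $s$ and the coefficients, not on $\eps$ or $N$. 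For $N>N_*$ and $t\le\min(t_*,\tilde K_2\eps^{-1})$ we get $E_{2m+1}(u_N(t))<\tfrac12\tilde K_1^2-C_*$, and hence $\|u_N(t)\|_{H^m}<\tilde K_1$. Applied at $t_*$, this contradicts the definition of $t_*$ unless $t_*>\tilde K_2\eps^{-1}$. This yields \eqref{claim:Nstar} on $[0,\tilde T]$.

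\textbf{Main obstacle.} The delicate point is that every constant must be independent of $\eps$ and $N$. This holds because $K$ and $p$ in Lemma \ref{lem:E5_evolution} depend only on $m$ and the coefficients, with $\eps<1$ entering only through the prefactor $\eps$. The coercivity constant depends only on the conserved mass. The projection error $N^{-2}$ is absorbed by taking $N>N_*(\eps)\sim\eps^{-1/2}$. We also need the applicability of Lemma \ref{lem:E5_evolution} to the flow \eqref{eq:approx_Dysthe} for every fixed $N$, which is secured by global existence of $u_N$ in $H^\infty$.
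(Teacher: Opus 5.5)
Your proof is correct and uses the same ingredients as the paper: the reduction to $m=[s]$, Lemma \ref{lem:E5_evolution} with $N>\eps^{-1/2}$ so that $\eps+N^{-2}\lesssim\eps$, the coercivity bound \eqref{equiv} combined with mass conservation and Young's inequality, and the same treatment of negative times. The only difference is how you close: you run a continuity argument at the fixed threshold $\tilde K_1$, which linearizes the growth of $E_{2m+1}$ and gives an explicit $\tilde K_2$, whereas the paper integrates the nonlinear inequality $X\le C_2+C_3\eps\int_0^t X^q\,d\tau$ and compares with the solution of the associated Bihari-type ODE; both give the $\eps^{-1}$ time scale with constants independent of $\eps$ and $N$.
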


\begin{proof}
Without loss in regularity we consider $s \in \mathbb{N}_{\geq 2}$ and $u_{0} \in H^{s}$, since for a general $s \geq 2$ the following estimates hold true for $[s] \in \mathbb{N}_{\geq 2}$. Let $u_{N} \in C(\mathbb{R}, H^{\infty})$ be the solution of the approximate problem \eqref{eq:approx_Dysthe} with scaled coefficients.

Under these scaling hypotheses, there exists $N_*$ such that for any $N \geq N_*(\eps)$, we have $N > \eps^{-1/2}$. Then, by \eqref{eq:E5_differential_ineq}, we have
\begin{equation} \label{eq:E2s1_eps_bound}
\left|\frac{d}{dt}E_{2s+1}(u_{N})\right| \leq \eps C_{0} \left(1 + \|u_{N}\|_{H^{s}}^{p}\right),
\end{equation}
where $C_{0} > 0$ is a constant uniform in $N$ and $\eps$, and where $p > 2$ depends only on $s$.

Moreover, by \eqref{equiv} we have
\begin{align} \label{eq:E2s1_initial_bound}
\left| E_{2s+1}(u_{N}(0)) \right| \lesssim \left(1 + \|u_{0}\|_{H^{s}}^{2}\right),
\end{align}
where we have used $\sigma_s<2$ and the uniform boundedness of the Fourier truncation at time $t=0$, with the omitted constants uniform in $N$ and $\eps$. 
By integrating \eqref{eq:E2s1_eps_bound} over time for any $t \geq 0$, we obtain
\begin{equation}\label{1:imp}
\left| E_{2s+1}(u_{N}(t))- E_{2s+1}(u_{N}(0)) \right|
\lesssim
 \eps \int_{0}^{t}
\left(1+\|u_{N}(\tau)\|_{H^{s}}^{p}\right)d\tau.
\end{equation}

The following computation follows the same argument as in the proof of Corollary \ref{e0GWP} (see \eqref{eq:H1_reconstruction} onwards). For the sake of completeness, we recall only the general idea.

{Combining \eqref{1:imp}, \eqref{equiv}, the conservation of the $L^2$ norm, and Young's inequality, there exist constants $C_2,C_3>0$, uniform in $N$ and $\eps$, such that
\begin{equation}\label{eq:longtime_scalar_ineq}
X(t) \leq C_2 + C_3\eps\int_0^t X(\tau)^q\,d\tau =:G(t), \qquad X(t):=1+\|u_N(t)\|_{H^s}^2, \qquad q:=\frac{p}{2}>1.
\end{equation}
Here we used $\sigma_s<2$ to absorb the lower-order term in \eqref{equiv} by Young's inequality. Since $X(t)\leq G(t)$, we have
\[
G(t) \leq \frac{C_2} {\left( 1-(q-1)C_4\eps t \right)^{1/(q-1)}}, \qquad C_4:=C_3C_2^{q-1},
\]
as long as the denominator is positive. By choosing $\tilde K_0>0$ such that $C_2<1+\tilde K_0^2$, for any $\tilde K_1 > \tilde K_0$,  there exists $\tilde K_2>0$ uniform in $\eps$ such that
\[
\|u_N(t)\|_{H^s}\leq\tilde K_1 \qquad \text{for } 0\leq t\leq\tilde K_2\eps^{-1}.
\]
The same estimate holds for negative times by \eqref{eq:E2s1_eps_bound}.}
\end{proof}

By applying Lemma \ref{lem:timescale_existence} we can now prove our second main Theorem \ref{thm:1}.
\begin{proof}[Proof of Theorem \ref{thm:1}]
We start by considering $s \in \mathbb{N}_{\geq 2}$ and $u_{0} \in H^{s}$. In this case, \eqref{claim:Nstar} is satisfied for $s$. The uniform bound $\eqref{claim:Nstar}$ allows us to apply the compactness/convergence argument of Lemma \ref{UNintermediate2} on $[-\widetilde T,\widetilde T]$. Hence the approximate solutions converge to the unique exact solution $u$ on this interval, weakly in $H^s$ and strongly in $H^{s'}$, $s'<s$. Exploiting the standard property of the weak lower semicontinuity of norms, we have
\begin{equation*}
\|u(t)\|_{H^{s}} \leq \liminf_{N\to\infty}\|u_{N}(t)\|_{H^{s}} \leq \sup_{N\geq N_*}\|u_{N}(t)\|_{H^{s}} \leq \widetilde{K}_{1},
\end{equation*}
for any $t \in [-\tilde T, \tilde T]$, proving \eqref{stability} in the integer case. This ensures that the exact solution does not blow up in $H^{s}$ on a time interval of length proportional to $\eps^{-1}$, with a bound uniform in $\eps$, for any $s \in \mathbb{N}_{\geq 2}$.  

{Since the solution has been constructed on $[-\tilde T,\tilde T]$, the local theory can be restarted and then its maximal existence times satisfy
\[
T_{1,\textnormal{max}},T_{2,\textnormal{max}} > \tilde T=\tilde K_2\eps^{-1},
\]
which proves \eqref{stima:longtime} when $s$ is an integer. It remains to consider $s\geq2$ with $s\notin\mathbb N$. Set $m:=[s]$. Since $u_0\in H^s\subset H^m$, we can apply the integer case already proved at regularity $m$. Hence, for the same fixed initial datum, there exists a solution $v\in C([-\tilde T,\tilde T],H^m)$ to \eqref{app:D}, where $\tilde T=\tilde K_2\eps^{-1}$, such that
\begin{equation}\label{eq:integer_stability_fractional}
\sup_{t\in[-\tilde T,\tilde T]} \|v(t)\|_{H^m} \leq\tilde K_1.
\end{equation}
Now let $u\in C((-T_{1,\textnormal{max}},T_{2,\textnormal{max}}),H^s)$ be the maximal $H^s$ solution with initial datum $u_0$. On the common
interval of existence, both $u$ and $v$ are $H^m$ solutions of \eqref{app:D} with the same initial datum. Therefore, by the uniqueness part of Theorem \ref{GWPG}, $u(t)=v(t)$ throughout their common interval of existence. Assume by contradiction that $T_{2,\textnormal{max}}\leq\tilde T$. Then, by
\eqref{eq:integer_stability_fractional},
\[
\sup_{0\leq t<T_{2,\textnormal{max}}} \|u(t)\|_{H^1} \leq \sup_{0\leq t<T_{2,\textnormal{max}}} \|v(t)\|_{H^m} \leq\tilde K_1,\]
which contradicts the continuation criterion of Theorem \ref{GWPG}. Consequently, $T_{2,\textnormal{max}}>\tilde T$, and the same argument in the negative time direction gives $T_{1,\textnormal{max}}>\tilde T,$ which proves \eqref{stima:longtime} for arbitrary real $s\geq2$. Finally, since $[-\tilde T,\tilde T]$ is contained in the maximal existence interval of $u$, the uniqueness statement of Theorem \ref{GWPG} gives $u(t)=v(t)$ for any $t \in[-\tilde T,\tilde T]$. Combining this identity with
\eqref{eq:integer_stability_fractional}, and recalling that $m=[s]$, we obtain \eqref{stability}, which concludes the proof.}
\end{proof}

\end{document}